\definecolor{wred}{rgb}{0.533333,0.10980,0.15686}
\definecolor{wredlight}{rgb}{0.788, 0.11, 0.157}
\definecolor{darkbrown}{RGB}{170,100,0} 
\definecolor{darkdarkbrown}{RGB}{110,70,0} 
 \renewcommand*{\backrefalt}[4]{%
    \ifcase #1%
     \or (page:~#2)%
     \else (pages:~#2)%
    \fi%
    }
\def\@fnsymbol#1{\ensuremath{\ifcase#1\or \dagger\or \ddagger\or
   \mathsection\or \mathparagraph\or \|\or **\or \dagger\dagger
   \or \ddagger\ddagger \else\@ctrerr\fi}}
\newtheorem{theorem}{Theorem}[section]
\newtheorem{lemma}{Lemma}[section]
\newtheorem{definition}{Definition}[section]
\newtheorem{proposition}{Proposition}[section]
\newtheorem{example}{Example}[section]
\newtheorem{corollary}{Corollary}[section]
\newtheorem{remark}{Remark}[section]
\providecommand{\keywords}[1]
{
  \small	
  \textbf{\textit{Keywords---}} #1
}
\DeclareMathOperator*{\argmin}{arg\,min}
\DeclareMathOperator{\polylog}{polylog}
\DeclareMathOperator{\Cov}{Cov}
\newcommand{\pred}[1]{\delta\left[#1\right]}
\newcommand{\set}[1]{\left\{ #1 \right\}}
\DeclarePairedDelimiter\ceil{\lceil}{\rceil}
\DeclarePairedDelimiter\floor{\lfloor}{\rfloor}
\newcommand{\eqdef}{\doteq}
\newcommand{\bbR}{\mathbb{R}}
\newcommand{\bbP}{\mathbb{P}}
\newcommand{\bbE}{\mathbb{E}}
\newcommand{\bbN}{\mathbb{N}}
\newcommand{\bbZ}{\mathbb{Z}}
\newcommand{\trn}{^\intercal}
\newcommand{\abs}[1]{\left| #1 \right|}
\newcommand{\nrm}[1]{\left\Vert #1 \right\Vert}
\newcommand{\inrm}[1]{\| #1 \|}
\newcommand{\PR}[2][]{\mathbb{P}_{#1}\left( #2 \right)}
\newcommand{\E}[2][]{\mathbb{E}_{#1}\left[ #2 \right]}
\newcommand{\Var}[2][]{\operatorname{Var}_{#1}\left[ #2 \right]}
\newcommand{\eps}{\varepsilon}
\newcommand{\bigO}{\mathcal{O}}
\newcommand{\calX}{\mathcal{X}}
\newcommand{\calP}{\mathcal{P}}
\newcommand{\calE}{\mathcal{E}}
\newcommand{\calB}{\mathcal{B}}
\newcommand{\calW}{\mathcal{W}}
\newcommand{\calG}{\mathcal{G}}
\newcommand{\calJ}{\mathcal{J}}
\newcommand{\calS}{\mathcal{S}}
\newcommand{\calA}{\mathcal{A}}
\newcommand{\tv}[1]{\nrm{#1}_{\mathsf{TV}}}
\DeclareMathOperator{\Spec}{Spec}
\newcommand{\tmix}{t_{\mathsf{mix}}}
\newcommand{\trel}{t_{\mathsf{rel}}}
\newcommand{\atmix}{t^{\sharp}_{\mathsf{mix}}}
\newcommand{\Cue}{C_{\mathsf{u.erg.}}}
\newcommand{\Ce}{C_{\mathsf{erg.}}}
\newcommand{\appenref}[1]{#1}
\newtheorem{condition}{Condition}
\title{\vspace{-2.0cm} Optimistic Estimation of Convergence in \\ Markov Chains with the Average-Mixing Time}
\begin{document}

\author[1]{Geoffrey Wolfer \thanks{email: wolfer@go.tuat.ac.jp. 
}}
\affil[1]{Department of Electrical Engineering and Computer Science, Tokyo University of Agriculture and Technology, Tokyo}
\author[2]{Pierre Alquier \thanks{email: alquier@essec.edu.}}
\affil[2]{ESSEC Business School, Asia-Pacific campus, Singapore}
\date{\today}

\maketitle

\begin{abstract}
    The convergence rate of a Markov chain to its stationary distribution is typically assessed using the concept of total variation mixing time. However, this worst-case measure often yields pessimistic estimates and is challenging to infer from observations. In this paper, we advocate for the use of the average-mixing time as a more optimistic and demonstrably easier-to-estimate alternative. We further illustrate its applicability across a range of settings, from two-point to countable spaces, and discuss some practical implications.
\end{abstract}

\keywords{Ergodic Markov chain; average-mixing time; countable state space; $\beta$-mixing.}

\tableofcontents

\section{Introduction}
\label{section:introduction}
The total variation distance variant of the mixing time $\tmix$ of a Markov chain $X_1, \dots, X_n$ on a discrete state space $\calX$, governed by a transition operator $P$, quantifies the time to convergence to its stationary distribution $\pi$. 
Two observations on the chain separated in time by about $\tmix$ will be approximately independent and sampled from $\pi$.
Estimates on the mixing time yield exponential concentration inequalities \citep{chung2012chernoff, paulin2015concentration}, and in turn, generalization bounds for stable machine learning algorithms trained with Markovian data \citep{mohri10a,agarwal2012generalization, oliveira2022split}, regret bounds for reinforcement learning algorithms \citep{ortner2020regret}, and convergence diagnostics for Markov Chain Monte Carlo methods, which are widely used in mathematical physics \citep{metropolis1953equation, hastings1970monte}, computer science \citep{sinclair2012algorithms} and statistics \citep{chib1995understanding}.

However, $\tmix$ is an intrinsically pessimistic notion, defined in terms of the worst-case initial distribution.
Furthermore,
$\tmix$ is generally unknown a priori, and
theoretical upper bounds, when available, are typically conservative. Finally, estimation of $\tmix$ from a single trajectory of observations of length $n$ is known to be a statistically hard problem which requires at least
\begin{equation}
\label{eq:lower-bound-tmix-estimation}
    n \in \Omega \left( \frac{1}{\pi_\star} + \tmix\abs{\calX} \right),
\end{equation}
for estimation down to constant multiplicative error \citep[Theorem~3.1, Theorem~3.2]{hsu2019mixing}, where $\abs{\calX}$ is the size of the space and $\pi_\star$ is the minimum stationary probability (refer to Section~\ref{section:notation}).
The sample complexity lower bound in \eqref{eq:lower-bound-tmix-estimation} is prohibitively large when $\pi_\star \propto \exp(-\abs{\calX})$ for vast state spaces, and even vacuous in the infinite case.

In this paper, we propose to address some of the above listed concerns about the worst-case $\tmix$, by considering the average-mixing time $\tmix^\sharp$ which recently came under the spotlight in \citet{espuny2022speeding, munch2023mixing}. 
Specifically, we will first confirm that $\tmix^\sharp$ is a more optimistic measure of convergence, in the sense where some chains can average-mix arbitrarily faster than they mix, even on a two-point space.
We then initiate the program of estimating $\tmix^\sharp$ from a single trajectory of observations and we demonstrate that the problem is statistically less costly than the one pertaining to $\tmix$. In fact, contrary to the worst-case problem, our analysis carries to a large subset of countable-state Markov chains.
One should rightfully expect $\tmix^\sharp$ to be less informative than $\tmix$ since it measures a weaker notion of mixing. However, we confirm that knowledge of $\tmix^\sharp$ remains of practical value in the sense where it can be used in known powerful decoupling techniques \citep{yu1994rates} and appears as a natural parameter controlling the large deviation rate for additive functionals.
Along the way, we explore the connections between the average-mixing time and $\beta$-mixing, argued by \citet{vidyasagar2013learning, mohri10a} to be
 ``just the right'' assumption  for the
analysis of machine learning algorithms with weakly-dependent data.

\subsection{Related work}
\label{section:related-work}
Concentration inequalities for Markov chains typically involve a parameter, such as a spectral gap or the mixing time, which accounts for dependencies within the sample \citep{lezaud2001chernoff, a2004optimal, 10.1214/ECP.v20-3966, 10.1214/20-ECP286}. 
Estimates on this parameter yield corresponding deviation bounds. However, this parameter is often unknown and its definition is restricted to geometric ergodic settings.

\subsubsection{Estimation of mixing time in Markov chains}
The program of estimating the (worst-case) mixing time of a Markov chain from a single trajectory of observations was initiated by \citet{hsu2015mixing}. In the time-reversible setting, they obtained the first upper and lower bounds on the sample complexity of estimating the absolute spectral gap of a chain in multiplicative error, and constructed fully empirical confidence intervals for the quantity.
The analysis in the reversible setting was later complemented in \citet{hsu2019mixing}, introducing an amplification method to obtain a generally unimprovable upper bound on the problem.
In the more challenging non-reversible setting, the absolute spectral gap is no longer a good proxy for the mixing time. In \citet{wolfer2024improved}, the authors propose to estimate Paulin's pseudo-spectral gap \citep{paulin2015concentration} instead, and devise a method which enjoys the same sample complexity as in the reversible setting, modulo logarithmic factors. In parallel, \citet{wolfer2024empirical} put forward a contraction approach, using a generalization of Dobrushin's coefficient, instead of spectral methods to construct fully empirical confidence intervals for the mixing time.

\subsubsection{Average-mixing time}
Average variants of the mixing time have recently made their apparition in the literature.
For instance, \citet{berestycki2018random} analyzed random walks on the giant component of the 
Erd\"{o}s--R\'{e}nyi
random graph $\calG(n, p)$ with $p = \Theta(1 / n)$, and showed that the mixing time when starting randomly from any vertex is the order $\bigO(\log n)$ while it was known to be of the order $\bigO(\log^2 n)$ for the mixing time from a worst starting point. Following up, \citet{espuny2022speeding} extended the approach on a larger collection of graphs with small bottlenecks, 
and show applicability of their approach for the
Newman--Watts small-world model and supercritically percolated expander graphs.
In \citep{lovasz1999faster}, the authors introduce the notion of a conductance averaged over subsets of states with various sizes, and demonstrate that it can lead to tighter convergence bounds for counting and volume computation problems.
Finally, the average-mixing time $\atmix$, considered by  \citet{munch2023mixing} and the object of our study, is a measure of convergence of a Markov chain 
defined with respect to the chain being started from its stationary distribution (refer to Section~\ref{section:average-mixing-time}), instead of the worst-case state.

In particular, we emphasize that the notion of average-mixing discussed in this paper differs from that in \citet{peres2015mixing, 10.1214/16-AIHP782, hermon2018characterization, anderson2021mixing}. In these references, averaging is performed by taking an average over two successive steps of the chain to address periodicity issues. In contrast, our analysis involves averaging over the starting state, similar to the definition provided by \cite{munch2023mixing}.

\subsection{Main contributions}
\label{section:main-contributions}
We only introduce the setup and minimal notation in this section. We direct the reader to Section~\ref{section:notation} for the comprehensive set of definitions used in this paper.
Let $X_1, \dots, X_n$ be a time-homogeneous time-discrete $\pi$-stationary ergodic Markov chain with transition operator $P$ over a countable state space $\calX$.
Assume that we are interested in approximating the stationary mean of a real function $f$ by the sample mean computed over the trajectory $X_1, \dots, X_n$. More specifically, we seek a bound on the deviation $\abs{f - \bbE_\pi f}$.
To achieve a bound in the Markovian setting similar to those for iid data, the required sample size is typically multiplied by the worst-case mixing time \citep{chung2012chernoff, paulin2015concentration}.
The underlying idea is that one must allow the chain sufficient time to reach its steady state before regarding the observation as approximately independent and drawn from the stationary distribution.
However, after the chain has reached steady state for the first time, waiting for an additional mixing time addresses the fact that we may have ended up in the worst possible state. While this approach is conservative, it may be unnecessarily cautious. 

\subsubsection{Highlight the significance of the average-mixing time}

Once the chain has reached steady state, since the state we reach should be distributed according to the stationary distribution, it might be preferable to consider the next time to stationarity in a more probabilistic way. 
With this in mind, the question arises whether we can replace the worst-case mixing time with the following more intuitive average version,
\begin{equation*}
    \atmix(\xi) = \argmin_{t \in \bbN} \set{ \sum_{x \in \calX} \pi(x) \tv{ e_x P^t - \pi } \leq \xi },
\end{equation*}
recently analyzed by \citet{munch2023mixing}.
Under this criterion, convergence is achieved faster. In fact convergence can occur arbitrarily faster, for denumerable spaces---in which the worst-case mixing may not even be defined---and even for binary state spaces.
We illustrate how together with an a priori known order of convergence, $\atmix$ can be used to summarize the convergence rate of a Markov chain in lieu of its worst-case mixing time.
Specifically, we show (refer to Lemma~\ref{lemma:large-deviation-bound-average-mixing-time}) that the average-mixing time $\atmix(\delta)$ can be naturally plugged into the sample complexity 
for controlling the deviation probability from zero of a $1$-bounded centered function $f$.
Indeed, for $\frac{1}{n}\sum_{t=1}^{n} f(X_t) \leq \eps$ to hold with probability at least $1 - \delta$,
we show that 
under sub-exponential $\beta$-mixing of order $b$ (Definition~\ref{definition:beta-mixing-flavors}), it is sufficient to have a trajectory of observations of length
\begin{equation*}
    n \geq  \frac{\left( e/b\right)^{1/b}}{b \eps^2}  \atmix\left( \delta \right) \polylog\left( \frac{1}{\delta}, \frac{1}{\eps} \right),
\end{equation*}
where $\polylog$ is a poly-logarithmic function of its parameters and $e$ is the Euler number. It is instructive to note that $b = 1$ corresponds to the assumption that the chain is uniformly ergodic, and that the bound continues to hold for $b < 1$, in which case the rate $b$ affects only the constant in the above expression.
Similarly, in the polynomial $\beta$-mixing setting of order $b$ (Definition~\ref{definition:beta-mixing-flavors}), we show that for  some natural constant $C \in \bbR_+$,
\begin{equation*}
n \geq C \left( \frac{8}{\eps^2} \log \frac{4}{\delta} \right)^{\frac{b+1}{b}} \left( 2 \atmix\left(\delta\right) \right)^{\frac{1}{b}},
\end{equation*}
is sufficient.
The trajectory lengths listed above both assume a stationary start.
However, this assumption can be relaxed (refer to \appenref{Section~\ref{section:extension-non-stationary}}).

\subsubsection{Estimation of the average-mixing time}

In the case where $\calX$ is a finite space, for $\xi \in (0,1)$ and $\delta \in (0,1)$, 
our chief technical contribution is to show (refer to Corollary~\ref{corollary:average-mixing-time-estimation-finite-space}) that a trajectory $X_1, X_2, \dots, X_n$ of observations of length
    \begin{equation*}
        n \geq \frac{\tmix}{\xi^2} \abs{\calX}^2 \polylog\left(\tmix, \frac{1}{\xi}, \frac{1}{\delta}\right),
    \end{equation*}
    where $\tmix$ is the mixing time of $P$,
is sufficient in order to obtain an estimate $\widehat{\tmix}^{\sharp}(\xi)$ of the average-mixing time $\atmix(\xi)$ which satisfies
\begin{equation*}
\begin{split}
\widehat{\tmix}^{\sharp}(\xi) \in \left[ \tmix^{\sharp}(3\xi/2), \tmix^{\sharp}(\xi/2) \right],
\end{split}
\end{equation*}
with probability at least $1 - \delta$.
    In other words, one can demonstrably estimate with high probability the average-mixing time linearly in the number of parameters of the transition operator, answering a question posed by Luca Zanetti and John Sylvester \citep{zanetti2023}.
    This discovery stands in stark contrast with the worst-case mixing time estimation problem, or the relaxation time estimation problem, for which the sample complexity necessarily depends inversely on the minimum stationary probability \citep{hsu2019mixing, wolfer2024improved} (refer to Section~\ref{section:comparison-relaxation-time-mixing-time-estimation-complexity}).

In fact, contrary to the worst-case mixing time estimation problem,
we show that it is possible to extend the problem and upper bounds on the sufficient trajectory length to denumerable spaces under additional suitable conditions.
Namely, when $\calX \cong \bbN$, we show (refer to Theorem~\ref{theorem:estimation-average-mixing-time}.2) that in the uniformly ergodic setting, a single path of length 
\begin{equation*}
        n \geq \frac{\tmix}{\xi^2}   \calJ_{\infty, \xi/2}  \polylog\left(\tmix, \frac{1}{\xi}, \frac{1}{\delta}\right),
    \end{equation*}
    is sufficient,
where for any $p \in [1, \infty]$, $\calJ_{p, \xi}$ is an entropic term defined as
\begin{equation*}
    \calJ_{p, \xi} \eqdef \sup \set{ \nrm{Q^{(s)}}_{(1 - 1/p)/2}^{1 - 1/p} \colon s \in \bbN, s \leq \atmix(\xi) },
\end{equation*}
and where for $x,x' \in \calX$,  $Q^{(s)}(x,x') = \pi(x) P^s(x,x')$.
We show that one can even analyze and obtain a bound on the sufficient trajectory length in the non uniformly ergodic setting (refer to Theorem~\ref{theorem:estimation-average-mixing-time}.1).

\subsection{Notation and setting}
\label{section:notation}

We let $\calX$ be a countably infinite space of symbols equipped (for simplicity) with a total order. 
Real functions on $\calX$ are written as infinite row vectors and for $x \in \calX$, $e_x$ is the function defined by $x' \in \calX$, $e_x(x') = \pred{x = x'}$, where $\delta[\cdot]$ is the predicate function taking values in $\set{0, 1}$. 
For $p \in (0, \infty)$, and a function $v \in \bbR^{\calX}$,
\begin{equation*}
    \nrm{v}_p \eqdef \left(\sum_{x \in \calX} \abs{v(x)}^p\right)^{1/p},
\end{equation*}
for convenience, we define
\begin{equation*}
    \nrm{v}_0 \eqdef \abs{ \set{ x \in \calX \colon \abs{v(x)} > 0 } },
\end{equation*}
and for any $p \in [0, \infty)$,
\begin{equation*}
    \ell_p \eqdef \set{ v \in \bbR^{\calX} \colon \nrm{v}_p < \infty }.
\end{equation*}
The set of all probability distributions over $\calX$, written $\calP(\calX)$, corresponds to non-negative functions summing to unity.
The dynamics of a time-discrete, time-homogeneous Markov chain
\begin{equation*}
    X_1, X_2, X_3, \dots
\end{equation*}
on $\calX$ are governed by a transition operator represented by an infinite stochastic matrix $P$
$$P(x, x') = \PR{X_{t+1} = x' | X_{t} = x }, \forall x,x' \in \calX.$$
We write $\calW(\calX)$ for the set of all infinite stochastic matrices over $\calX$---and we also identify $\calW(\calX)$ with Markov chains whose transition operator belong to $\calW(\calX)$.
For a finite trajectory of length $n \in \bbN$, started with an initial distribution $\mu \in \calP(\calX)$ and evolving according to $P \in \calW(\calX)$, the probability of observing $(x_1, x_2, \dots, x_n) \in \calX^n$ can be factored as
\begin{equation*}
    \PR[\mu]{X_1 = x_1, \dots, X_n = x_n} = \mu(x_1) \prod_{t=1}^{n-1} P(x_t, x_{t+1}).
\end{equation*}
Matrix multiplication retains its meaning in the countable setting.
Namely, for $\mu \in \calP(\calX)$ and $P \in \calW(\calX)$, we write,
\begin{equation*}
    \mu P = \sum_{x,x' \in \calX} \mu(x) P(x,x') e_{x'}.
\end{equation*}
Extensions of notation to 
$Pf \trn$ with $f \in \ell_1$, or for $P^t$, are similarly defined.
We say that $P$ is ergodic when $P$ is irreducible, aperiodic and positive-recurrent \citep{levin2009markov}.
In that case, there exists a unique stationary distribution $\pi$ verifying $\pi P = \pi$, and we can define the distance to stationarity as
\begin{equation*}
    d(t) \eqdef \sup_{x \in \calX} \tv{e_x P^t - \pi},
\end{equation*}
where $\tv{\cdot}$ is the total variation distance defined for $\mu, \nu \in \calP(\calX)$ by
$\tv{\mu - \nu} \eqdef \frac{1}{2} \nrm{\mu - \nu}_1$.
We say that $P$ is
uniformly ergodic whenever, there exist $M > 0$ and $\rho \in (0,1)$ such that for any $t \in \bbN$,
\begin{equation*}
    d(t) \leq M \rho^t.
\end{equation*}
We write
$\calW^\star(\calX)$ the set of uniformly ergodic Markov chains---identified with their transition operators. 
We recall that when $P \in \calW^\star(\calX)$ the (worst-case) mixing time, defined for any $\xi \in (0, 1)$ as
\begin{equation*}
\label{eq:worst-case-mixing-time}
\begin{split}
    \tmix(\xi) \eqdef \argmin_{t \in \bbN} \set{ d(t) \leq \xi },
\end{split}
\end{equation*}
is finite. It is furthermore customary to define
\begin{equation}
\label{eq:worst-case-mixing-time-fixed}
    \qquad \tmix \eqdef \tmix(1/4),
\end{equation}
as a consequence of the following property \citep[Lemma~4.11]{levin2009markov} 
\begin{equation}
\label{eq:sub-multiplicativity-mixing-time}
    \tmix(\xi) \leq \ceil*{\log_2 \frac{1}{\xi}} \tmix.
\end{equation}
Elements related to the notion of average-mixing time are deferred to the Section~\ref{section:average-mixing-time} for clarity.

\subsection{Outline}
\label{section:outline}
Section~\ref{section:average-mixing-time} introduces the average-mixing time and illustrates its significance.
    Section~\ref{section:estimation} is dedicated to the problem of inferring the $\beta$-mixing coefficients and average-mixing time from a single trajectory of observations.
    In Section~\ref{section:estimation-beta-mixing-coefficients-individually}, we estimate the $\beta$-mixing coefficients of the chain individually, first under general ergodicity condition and then under uniform ergodicity.
    In Section~\ref{section:estimation-average-mixing-time} we deduce upper bounds for the problem of estimating the average-mixing time from a single trajectory. 
    In Section~\ref{section:implications-various-state-space-scales}, we demonstrate that our contribution holds implications at various scales of state spaces and provide more explicit rates under additional structural assumptions on the underlying chain.

    Technical proofs, extensions to non-stationary chains and a discussion on the connection with $\beta$-mixing have been deferred to the supplementary material for readability.

\section{The average-mixing time}
\label{section:average-mixing-time}
In this section, we first recall the definition of the average-mixing time and then explore some of its basic properties.
For a $\pi$-stationary transition matrix $P \in \calW(\calX)$ and for a proximity parameter $\xi \in (0,1)$, the average-mixing time of $P$ is defined \citep{munch2023mixing} as
\begin{equation*}
\label{eq:average-mixing-time}
\begin{split}
    \atmix(\xi) &\eqdef \argmin_{t \in \bbN} \set{ \beta(t) \leq \xi },
\end{split}
\end{equation*}
where $\beta \colon \bbN \to [0,1]$ is a sequence defined by 
\begin{equation}
\label{definition:stationary-beta-mixing-coefficient}
    \beta(t) \eqdef \sum_{x \in \calX} \pi(x) \tv{ e_x P^t - \pi }.
\end{equation}
Observe that instead of considering the distance to stationarity from the worst-case starting state, as the distance $d$ does, $\beta$ takes an average distance over starting states with respect to the long term probability of being in each state. This choice naturally reduces the contribution of states that are difficult to reach, resulting in a more optimistic convergence criterion.

\begin{remark}
    Similar to the definition of $\tmix$ in 
    \eqref{eq:worst-case-mixing-time-fixed}, one may wish to introduce $\tmix^\sharp \eqdef \tmix^\sharp(1/4)$. However, since $\beta$ does not enjoy the same sub-multiplicativity properties as $d$ ---refer to \eqref{eq:sub-multiplicativity-mixing-time} and  \citep[Lemma~4.11]{levin2009markov}---it is generally not enough to consider an arbitrary value of $\xi \in (1/2)$, and in this manuscript we will keep $\xi$ as a user-fixed value.
\end{remark}

\begin{remark}[Average-mixing time is stationary $\beta$-mixing time]
\label{remark:notation-choice-beta}
The astute reader will have noticed that we choose $\beta$ to denote the average distance to stationarity instead of $d^\sharp$, as in \citep{munch2023mixing}.
This choice is motivated by our observation that the average-mixing time corresponds to the stationary $\beta$-mixing time of the Markov chain, which we discuss in detail in 
\appenref{Section~\ref{section:connection-beta-mixing}}. The sequence $\beta$ defined at \eqref{definition:stationary-beta-mixing-coefficient} will henceforth be referred to as the sequence of (stationary) $\beta$-mixing coefficients.
\end{remark}

An immediate consequence of Remark~\ref{remark:notation-choice-beta} is that the limit $\lim_{t \to \infty} \beta(t) = 0$ always exists \citep{bradley2005basic}, or in other words,  $\atmix(\xi) < \infty$ for any $\xi \in (0,1)$ and any ergodic $P \in \calW(\calX)$ over countable $\calX$.
However, the rate of convergence of the sequence of $\beta$-mixing coefficients can be arbitrarily slow.
\begin{definition}
\label{definition:beta-mixing-flavors}
Rates of convergence for $\beta$-mixing.
\begin{enumerate}
    \item (Sub-exponential, exponential)
    When there exists $\beta_0, \beta_1 \in \bbR_+$ and $b \in (0,1]$ such that for any $s \in \bbN$, $\beta(s) \leq \beta_0 e^{-\beta_1 s^b}$ we say that the chain $\beta$-mixes sub-exponentially when $b \neq 1$, and exponentially when $b = 1$.
    \item (Polynomial) When there exists $\beta_1, b \in \bbR_+$ such that for any $s \in \bbN$, $\beta(s) \leq \beta_1/s^b$, we say that the process $\beta$-mixes algebraically or polynomially.
\end{enumerate}
\end{definition}
Notably, when $P$ is uniformly ergodic with mixing time $\tmix$, it holds that
\begin{equation}
\label{eq:control-beta-mixing-uniformly-ergodic}
    \beta(s) \leq 2 \exp\left( - \frac{\log 2}{\tmix} s\right),
\end{equation}
hence the process is exponentially $\beta$-mixing with $\beta_0 = 2$ and $\beta_1 = \log(2) / \tmix$.
However, nothing a priori precludes the $\beta$-mixing rate from being much faster than \eqref{eq:control-beta-mixing-uniformly-ergodic}.

\subsection{Average-mixing  versus worst-case mixing}

From their respective definitions, it is immediate that $\beta(t) \leq d(t)$. In other words, for any fixed $\xi \in (0,1)$, average-mixing never occurs slower than mixing.
The equality $\tmix = \atmix$ is notably achieved
for transitive chains \citep[Remark~1]{munch2023mixing}, which are chains such that for any $x_0,x_0' \in \calX$, there exists a bijection $\phi \colon \calX \to \calX$, with $\phi(x_0) = x_0'$ and  for all $x,x' \in \calX, P(\phi(x), \phi(x')) = P(x,x')$ \citep[Section~2.6.2]{levin2009markov}.
Transitive chains include, but are not limited to, random walks on groups \citep{mckay1996vertex}.
For non-transitive chains, the mixing time can be dramatically larger than the average-mixing time, even when the state space is small, as made precise below.

\begin{proposition}
Let $M \in \bbR_+$ be arbitrarily large. There exists a transition operator $P$ such that for any $\xi \in (0,1)$, it holds that
\begin{equation*}
    \tmix(\xi)  > M \tmix^{\sharp}(\xi).
\end{equation*}
\end{proposition}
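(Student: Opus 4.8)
The plan is to realize the gap on a countably infinite state space, because on any \emph{finite} chain the inequality must fail for small $\xi$: there both $d(t)$ and $\beta(t)$ are governed by the second-largest eigenvalue modulus $|\lambda_2|$, so $d(t)^{1/t}$ and $\beta(t)^{1/t}$ share the same limit and $\tmix(\xi)/\atmix(\xi)\to 1$ as $\xi\to 0$. I would therefore build a uniformly ergodic $P$ on $\calX\cong\bbN$ whose worst-case distance $d$ and averaged distance $\beta$ decay at genuinely \emph{different} exponential rates, so that the two mixing times, both logarithmic in $1/\xi$, have slopes in ratio larger than $M$ for every $\xi$.

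The main device is a Doeblin-type mixture. Fix $\rho\in(0,1)$ and let $S\in\calW(\calX)$ be any $\pi$-stationary stochastic matrix; set $P=\rho S+(1-\rho)\mathbf{1}\pi$, where $\mathbf{1}\pi$ is the rank-one operator all of whose rows equal $\pi$ (a ``reset to $\pi$''). Using $S\mathbf{1}=\mathbf{1}$, $\pi S=\pi$ and $(\mathbf{1}\pi)^2=\mathbf{1}\pi$, a one-line induction gives $P^t=\rho^t S^t+(1-\rho^t)\mathbf{1}\pi$, hence $e_x P^t-\pi=\rho^t(e_x S^t-\pi)$ for every $x\in\calX$. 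Consequently the worst-case and averaged distances of $P$ satisfy $d_P(t)=\rho^t d_S(t)$ and $\beta_P(t)=\rho^t\beta_S(t)$ \emph{exactly}. The minorization $P(x,\cdot)\ge(1-\rho)\pi$ makes $P$ irreducible, aperiodic and uniformly ergodic with stationary law $\pi$, so that $\tmix$ is well defined; and since $d_P(t)\le\rho^t$, we in fact get $d_P(t)=\rho^t$ as soon as $d_S\equiv 1$.

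The second step is to choose $S$ so that it \emph{never} mixes in the worst case but mixes exponentially on average. I would take $S$ to be a strongly downward-biased nearest-neighbour walk on $\set{0,1,2,\dots}$ reflected at $0$, whose stationary law is geometric, $\pi(n)\asymp\theta^n$. Started from a far state $n$, the walk stays concentrated near $n-\Theta(t)$ for $t\lesssim n$, hence at total variation distance $\approx 1$ from $\pi$; letting $n\to\infty$ yields $d_S(t)=1$ for all $t$. The averaged distance, by contrast, only sees the light tail: $\beta_S(t)=\sum_n\pi(n)\tv{e_n S^t-\pi}\lesssim\sum_{n\gtrsim t}\pi(n)\lesssim\theta^{\Theta(t)}$, an exponential bound $\beta_S(t)\le C\sigma^t$ with $\sigma<1$. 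Combining with the mixture gives $d_P(t)=\rho^t$, so $\tmix(\xi)=\ceil{\log(1/\xi)/\log(1/\rho)}$, while $\beta_P(t)\le C(\rho\sigma)^t$, so $\atmix(\xi)\le\log(C/\xi)/\log(1/(\rho\sigma))$. Up to additive constants the ratio of these is the $\xi$-independent number $R=1+\log(1/\sigma)/\log(1/\rho)$; choosing $\rho$ close enough to $1$ (with margin, $\rho>\sigma^{1/(M-1)}$) forces $R>M$ for all $\xi$, while the regime of large $\xi$ with $\atmix(\xi)=0$ is trivial since then $\tmix(\xi)\ge 1>M\atmix(\xi)$.

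The main obstacle is precisely the tension isolated in the third step: I must prevent $d$ from decaying while forcing $\beta$ to decay \emph{exponentially}. The first requirement is what excludes every finite chain and pins the construction to an infinite space; the second is the delicate one, because a merely polynomial decay of $\beta_S$ would be swamped by the factor $\rho^t$ and would yield $\atmix(\xi)\sim\tmix(\xi)$, collapsing the ratio to $1$. Establishing the exponential tail bound $\beta_S(t)\le C\sigma^t$ for the biased walk—equivalently, that the $\pi$-averaged escape time has a geometric tail, including the fast relaxation of the bulk—is therefore the one genuine computation, after which the factor $M$ follows by tuning $\rho$ and the bias. One could alternatively instantiate $S$ from the controllable family of the recurring example rather than the explicit walk.
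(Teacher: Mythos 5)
Your proposal is correct, but it takes a genuinely different route from the paper. The paper proves the proposition by pointing to Lemma~\ref{lemma:arbitrary-gap-between-average-and-worst-case-mixing-time} on the two-point space $\{0,1\}$: there $d_{p,q}(t)$ and $d^{\sharp}_{p,q}(t)$ decay at the \emph{same} rate $\abs{1-p-q}^t$ but with prefactors whose ratio $(1+\eta)/(2\eta)$, $\eta=p/q$, can be made arbitrarily large, so for a \emph{given} $\xi$ one tunes $\eta$ so small that $\beta(0)\leq\xi$ (hence $\atmix(\xi)=0$) while $\tmix(\xi)\geq 1$; note that in the lemma the chain is allowed to depend on $\xi$. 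Your construction instead fixes a single chain that works uniformly over $\xi$, and your opening observation is exactly why this forces you off finite spaces: with equal decay rates the ratio $\tmix(\xi)/\atmix(\xi)$ tends to $1$ as $\xi\to 0$, so a finite chain cannot witness the statement in the strong quantifier order ``$\exists P\ \forall\xi$''. Your Doeblin mixture $P=\rho S+(1-\rho)\mathbf{1}\pi$ giving exactly $d_P(t)=\rho^t d_S(t)$ and $\beta_P(t)=\rho^t\beta_S(t)$ is a clean device, and the one step you flag as delicate---that a downward-biased reflected walk on $\bbN$ has $d_S\equiv 1$ yet $\beta_S(t)\leq C\sigma^t$---is genuinely the crux, but it is supported by the paper's own machinery: Example~\ref{example:birth-and-death} gives $\tv{e_{n_0}S^t-\pi}\leq M(n_0)\rho_0^t$ with $M(n_0)\asymp\pi(n_0)^{-1/2}$ and $\nrm{\pi}_{1/2}<\infty$ for a geometric tail (this is precisely the mechanism of Lemma~\ref{lemma:spectral-upper-bound}). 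What each approach buys: the paper's two-point example is elementary, fully explicit, and makes the pedagogical point that the gap appears already on the smallest possible space, at the price of the operator depending on $\xi$ (the prefactor mechanism, $\atmix(\xi)=0$ versus $\tmix(\xi)\geq 1$, is somewhat degenerate); your construction is heavier but delivers the literal statement with a single $P$ and a ratio bounded below by $1+\log(1/\sigma)/\log(1/\rho)>M$ uniformly in $\xi$, which is the more robust separation. Do make sure, when writing it up, to handle the additive constants uniformly over the range $\xi<\beta_P(0)$ rather than only asymptotically as $\xi\to 0$; this works because $\beta_P(0)=1-\nrm{\pi}_2^2<1$ is fixed while $1/\log(1/\rho)\to\infty$ as $\rho\uparrow 1$.
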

\begin{proof}
    Lemma~\ref{lemma:arbitrary-gap-between-average-and-worst-case-mixing-time} establishes this fact in the case where $\calX = \set{0,1}$.
\end{proof}

In fact, as we will see in the next section, a regime of particular interest to us is when the chain $\beta$-mixes much faster than $\Theta(\tmix)$.
We begin by recalling the well-studied class of birth and death Markov chains, which will serve as our running example.
\begin{example}[Birth and death Markov chains]
\label{example:birth-and-death}
 We let $\calX = \bbN$, and for three sequences $u,v,w \in [0,1]^\bbN$ such that $u(1) = 0$ and $\forall n \in \bbN, u(n) + v(n) + w(n) = 1$, we define the transition operator,
\begin{equation*}
P_{u,v,w} = \begin{pmatrix}
    v(1) & w(1) & 0 & 0 & 0 & \hdots \\
    u(2) & v(2) & w(2) & 0 & 0 & \vdots \\
    0 & u(3) & v(3) & w(3) & 0 & \ddots \\
    0 & 0 & u(4) & v(4) & \ddots & \ddots \\
    \vdots & \hdots & \ddots & \ddots & \dots & \ddots \\
\end{pmatrix}.
\end{equation*}
Due to their ubiquity, ergodicity properties of birth and death chains have been extensively studied in the literature \citep{miclo1999example, ding2010total, kovchegov2010, chen2013mixing}.
In the special case where $v(1) = 0$ and for $n \geq 2$ the sequences $u,v,w$ are constant with $u > w$, it can be shown that the family of chains
\begin{equation*}
P_{u,v,w} = \begin{pmatrix}
    0 & 1 & 0 & 0 & 0 & \hdots \\
    u & v & w & 0 & 0 & \vdots \\
    0 & u & v & w & 0 & \ddots \\
    0 & 0 & u & v & \ddots & \ddots \\
    \vdots & \hdots & \ddots & \ddots & \dots & \ddots \\
\end{pmatrix}
\end{equation*}
is geometrically ergodic \citep{meitz2021subgeometric, kovchegov2010},
that is, there exists $M \colon \calX \to \bbR_+$ and $\rho \in (0,1)$ such that for any $t \in \bbN$ and any $n_0 \in \calX$,
\begin{equation}
\label{eq:geometric-ergodicity}
    \tv{e_{n_0} P_{u,v,w}^t - \pi} \leq M(n_0) \rho^t.
\end{equation}
In particular, the geometric rate of the above family is given by
\begin{equation*}
    \rho = \max \set{ v + 2\sqrt{uw}, \frac{w}{w + v} }.
\end{equation*}
While geometric ergodicity is weaker than uniform ergodicity in as much as the distance to the stationary distribution depends on the starting state, convergence remains exponential and leads to a powerful convergence analysis based on spectral methods (refer also to Section~\ref{section:spectral-analysis}). But we now turn our attention to the even less favorable case of sub-geometric convergence, where the aforementioned spectral methods break down. This more challenging setting can arise even in for the seemingly simple class of birth and death chains in this example.
Indeed, let us borrow the following instantiation of a Chebyshev--type random walk, proposed by 
\citet[Section~4]{kovchegov2013class}.
Let $\theta > 0$ and $\lambda \geq \frac{2\theta^2}{(1 + \theta)(1 + 3\theta)}$. We define
\begin{equation*}
    \begin{split}
        u(1) = 0, \qquad v(1) &= 1 - w(1), \qquad w(1) = \frac{1}{(1 + \lambda)(\theta + 1)}\\
\end{split}
\end{equation*}
while for $n \geq 2$,
\begin{equation*}
    \begin{split}
        u(n) = \frac{1}{2(1+\lambda)} \cdot \frac{1 + (2n - 1) \theta}{1 + (2n - 3) \theta}, \qquad w(n) = \frac{1}{2(1+\lambda)} \cdot \frac{1 + (2n - 3) \theta}{1 + (2n - 1) \theta}, \\
    \end{split}
\end{equation*}
and $v(n) = 1 - u(n) - w(n)$. For convenience, we denote $P_{\theta, \lambda} = P_{u,v,w}$ for the above-defined sequences.
It can be verified that the stationary distribution of $P_{\theta, \lambda}$ satisfies for any $n \in \bbN$,
\begin{equation*}
    \pi_{\theta}(n) = \frac{2(1 + \theta)\theta}{(1 + (2n - 1) \theta)(1 + (2n - 3) \theta)} \asymp \frac{1}{n^2}.
\end{equation*}
One interesting feature is that the chain does not have a spectral gap---or in other words, it ``[lies] outside the scope of geometric ergodicity theory'' \citep{kovchegov2013class}.
In fact, for the random walk originating at state $n_0 \in \bbN$, it holds \citep[Theorem~2]{kovchegov2013class} that for any $t \in \bbN$,
\begin{equation*}
    \frac{c}{\sqrt{t}} \leq \tv{e_{n_0}P_{\theta, \lambda}^t - \pi_\theta} \leq  \frac{C n_0 \log(t + n_0 + 2)}{\sqrt{t}},
\end{equation*}
where $c$ is a constant depending on $\theta$, $\lambda$ and the starting state $n_0$, while $C$ is a constant which depends only on $\theta$ and $\lambda$.
Let $\xi \in (0,1)$ and denote 
$$n_\star = n_\star(\theta, \xi/2) = \min_{n \in \bbN} \set{ \sum_{k = n + 1}^{\infty} \pi_\theta(k) < \xi /2 }.$$
We can thus bound the $\beta$-mixing coefficient as follows,
\begin{equation*}
\begin{split}
    \beta(s) &\leq \xi/2 + \sum_{n = 1}^{n_{\star}} \pi_\theta(n) \tv{ e_{n} P_{\theta, \lambda}^s - \pi_\theta} \leq \xi/2 +  C' \frac{ \log\left(s + 2 + n_\star\right)}{\sqrt{s}} H_{n_\star}, \\
\end{split}
\end{equation*}
where $C'$ is a constant which depends only on $\theta$ and $\lambda$ and for any $k \in \bbN$, $H_{k}$ denotes the $k$th harmonic number which is bounded from above as $H_k \leq 1 + \log(k)$.
Furthermore, one can verify that $n_\star \leq 2 + \sqrt{(1 + \theta)/(\xi \theta)}$.
As a result, there exists a constant $C''$ which depends only on $\theta$ and $\lambda$ such that,
\begin{equation*}
\begin{split}
    \beta(s) &\leq \xi/2 + C'' \frac{\log^2\left(s + 4 + \sqrt{(1 + \theta)/(\xi \theta)}\right)}{\sqrt{s}}, \\
\end{split}
\end{equation*}
and we obtain that
\begin{equation*}
    \atmix(\xi) \leq \frac{1}{\xi^2} \polylog(1/\xi),
\end{equation*}
where $\polylog$ is a poly-logarithmic function of its argument, which may also depend on $\theta$ and $\lambda$.
Our example demonstrates how the contributions to $\beta(s)$ starting from the tail of the stationary distribution tend to be largely absorbed.
\end{example}

\subsection{Deviation of additive functionals of Markov chains}
\label{section:bounding-deviations}
We now illustrate how the average-mixing time, through its connection to $\beta$-mixing naturally appears as a quantity of interest when bounding deviations of functions evaluated on a trajectory sampled from a Markov chain.
More formally, let $X_1, \dots, X_n$ be a Markov chain over a countable space $\calX$, and whose dynamics are governed by $P \in \calW(\calX)$ with stationary distribution $\pi$.  
For simplicity, we consider a real bounded function $f$ on $\calX$, which is centered in the sense where its stationary expected value vanishes, that is $\bbE_\pi f = 0$.
The ergodic theorem states that the empirical mean taken on an infinite trajectory will similarly vanish,
\begin{equation*}
    \lim_{n \to \infty} \frac{1}{n} \sum_{t = 1}^n f(X_t) = 0.
\end{equation*}
In practice however, we are more likely to be interested in controlling the deviation of the sample mean from its expected value non-asymptotically. The following lemma illustrates how to recover a finite sample bound on the deviation probability, given a bound on the average-mixing time and the rate of convergence of the sequence of $\beta$-mixing coefficients.

\begin{lemma}[Bounding deviation of additive functionals evaluated on Markov chains]
\label{lemma:large-deviation-bound-average-mixing-time}
Let $f \colon \calX \to [-1, 1]$ with $\bbE_\pi f = 0$, let $\eps, \delta \in (0,1)$.
For any $\pi$-stationary ergodic Markov chain $X_1, X_2, \dots, X_n$ with transition operator $P \in \calW(\calX)$,
and average-mixing time $\atmix \colon (0,1) \to \bbN$,
the following statements hold.
\begin{enumerate}
    \item (Sub-exponential) If there exist $\beta_0, \beta_1, b \in \bbR_+$ with $\beta_0 \geq 1$, $b \in (0,1]$,
    such that for any $s \in \bbN$,
    $\beta(s) \leq \beta_0 \exp(-\beta_1 s^b)$, then there exists a universal constant $C \in \bbR_+$
such that for 
\begin{equation*}
    n \geq \frac{C}{\eps^2}  \left( \frac{e}{b}\right)^{1/b} \atmix\left( \xi(\eps, \delta) \right) \log \left( \frac{1}{\delta}\right),
\end{equation*}
with 
$$\xi(\eps, \delta) \eqdef \frac{\delta \eps^2}{16 \log \left(\frac{4}{\delta}\right)},$$
it holds that $\frac{1}{n}\sum_{t=1}^{n} f(X_t) \leq \eps$ with probability at least $1 - \delta$.
The above upper on the path length can be further streamlined as
\begin{equation*}
    n \geq \frac{C}{\eps^2}  \frac{1}{b} \left( \frac{e}{b}\right)^{1/b} \atmix\left( \delta/e \right) \log \left( \frac{1}{\delta}\right) \log \left( \frac{\log(1/\delta)}{\eps} \right),
\end{equation*}
for some universal constant $C \in \bbR_+$.
\item (Polynomial) If there exist $\beta_1, b \in \bbR_+$ with $\beta_1 \geq 1$, such that for any $s \in \bbN$, $\beta(s) \leq \frac{\beta_1}{s^b}$, then there exists a universal constant $C \in \bbR_+$
such that for 
\begin{equation*}
n \geq C \left( \frac{8}{\eps^2} \log \frac{4}{\delta} \right)^{\frac{b+1}{b}} \left( 2 \atmix\left(\delta\right) \right)^{\frac{1}{b}},
\end{equation*}
it holds that $\frac{1}{n}\sum_{t=1}^{n} f(X_t) \leq \eps$ with probability at least $1 - \delta$.

\end{enumerate}
\end{lemma}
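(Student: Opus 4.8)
The plan is to turn this into a concentration statement for a stationary $\beta$-mixing sequence, using the identification recorded in Remark~\ref{remark:notation-choice-beta} that $\beta$ is exactly the sequence of stationary $\beta$-mixing coefficients and $\atmix$ the associated mixing time, and then to run the blocking/decoupling scheme of \citet{yu1994rates}. Concretely, I would fix a block length $\ell\in\bbN$, discard every other block so as to leave $m\eqdef\floor{n/(2\ell)}$ retained blocks of length $\ell$ separated by gaps of length $\ell$, and set $Y_j\eqdef\frac1\ell\sum_{t\in H_j}f(X_t)$. Because the chain is $\pi$-stationary and $f$ is centered and $1$-bounded, each $Y_j$ lies in $[-1,1]$ with mean zero. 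Yu's coupling lets me replace the joint law of $(Y_1,\dots,Y_m)$ by a product of identical marginals at total-variation cost at most $(m-1)\beta(\ell)$; applying a one-sided Hoeffding bound to the decoupled average and noting that the full average exceeds $\eps$ only if one of the two (retained/discarded) half-averages does, I arrive at the master inequality
$$\PR{\tfrac1n\textstyle\sum_{t=1}^n f(X_t)>\eps}\;\leq\;2\exp\!\left(-\tfrac{m\eps^2}{2}\right)+2m\,\beta(\ell),\qquad m=\floor{\tfrac{n}{2\ell}}.$$
Both cases are then a matter of optimizing this bound over $\ell$ under the relevant decay assumption on $\beta$.

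For the polynomial case I would substitute $\beta(\ell)\leq\beta_1/\ell^b$ and demand that each of the two terms be at most $\delta/2$. The Hoeffding term requires $m\gtrsim\eps^{-2}\log(1/\delta)$, hence $\ell\lesssim n\eps^2/\log(1/\delta)$, while the decoupling term $\tfrac{n}{\ell}\beta(\ell)\lesssim n\beta_1/\ell^{b+1}$ requires $\ell\gtrsim(n\beta_1/\delta)^{1/(b+1)}$. These two windows for $\ell$ are simultaneously nonempty precisely when $n\gtrsim(\eps^{-2}\log\tfrac1\delta)^{(b+1)/b}(\beta_1/\delta)^{1/b}$; since $\atmix(\delta)$ is the least $s$ with $\beta_1/s^b\leq\delta$, the trailing factor is a power of $\atmix(\delta)$, and collecting the numerical constants while absorbing the treatment of $\beta_1$ into the universal $C$ recovers the displayed $\left(2\atmix(\delta)\right)^{1/b}$ dependence together with the exponent $\tfrac{b+1}{b}$.

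For the sub-exponential case the decay $\beta(\ell)\leq\beta_0 e^{-\beta_1\ell^b}$ makes the coefficients summable, which is what lets me use the single natural threshold $\ell=\atmix(\xi(\eps,\delta))$: here $\beta(\ell)\leq\xi(\eps,\delta)=\frac{\delta\eps^2}{16\log(4/\delta)}$ forces the aggregate decoupling cost $2m\beta(\ell)\leq\delta/2$ once the Hoeffding requirement $m\asymp\eps^{-2}\log\tfrac1\delta$ is met, giving $n\asymp\eps^{-2}\log\tfrac1\delta\cdot\atmix(\xi(\eps,\delta))$. The rate-dependent prefactor $(e/b)^{1/b}$ enters when the total mixing contribution is controlled through the convergent series $\sum_{s\geq0}e^{-\beta_1 s^b}\leq 1+\beta_1^{-1/b}\Gamma(1+1/b)$ together with the elementary estimate $\Gamma(1+1/b)\leq(e/b)^{1/b}$ valid for $b\in(0,1]$. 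The streamlined form is then obtained by substituting the explicit sub-exponential shape of $\atmix$, which converts the $\xi(\eps,\delta)$ argument into $\atmix(\delta/e)$ at the cost of the stated extra $\log\!\left(\tfrac{\log(1/\delta)}{\eps}\right)$ factor.

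The step I expect to be the main obstacle is bookkeeping the decoupling cost for the full sum while keeping the block count $m$ tied to the Hoeffding requirement: the same $m$ that I want only moderately large for concentration is the multiplier of $\beta(\ell)$ in the decoupling error, so the two demands must be balanced solely through $\ell$, which is why I split into retained and discarded halves rather than unioning over the $\ell$ residue classes (the latter would leak a spurious factor of $\ell=\atmix$ into $n$). A genuine structural difficulty separating the two regimes is that under merely polynomial mixing with $b\leq1$ the coefficients $\beta(s)$ need not be summable, so no variance-type aggregation is available and one is forced into the explicit block-length optimization that produces the larger exponent $\tfrac{b+1}{b}$; summability in the sub-exponential regime is exactly what permits the cleaner bound.
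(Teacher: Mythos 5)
Your proposal is correct and follows essentially the same route as the paper: the identical master inequality (Bernstein blocking into retained/discarded halves, Yu's Corollary~2.7 decoupling at cost $2m\beta(\ell)$, one-sided Hoeffding on the decoupled blocks) followed by optimization of the block length under each decay assumption, with the polynomial case balanced exactly as in the paper. The only divergence is cosmetic: in the sub-exponential case the paper optimizes $\ell$ via the Lambert $W_0$ function and then relates the optimizer back to $\atmix(\xi(\eps,\delta))$, whereas you plug in $\ell=\atmix(\xi(\eps,\delta))$ directly (which works, using monotonicity of $\beta$ to handle general $n$ above the threshold), and your attribution of the $(e/b)^{1/b}$ prefactor to a series bound is not where it actually arises in the paper---it comes from converting $\beta_1^{-1/b}$-type quantities into multiples of $\atmix$---but since that factor only enlarges the sufficient trajectory length, the conclusion is unaffected.
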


\begin{proof}[Proof sketch]

We adopt a traditional method described in \citet{yu1994rates}.
This approach combines a blocking technique, credited to \citet{bernstein1927extension} in the context of $m$-dependent processes, together with an application of Hoeffding's inequality to control the deviation probability of a sum of independent observations. We further optimize for the block size and reformulate the results to obtain an expression involving the average-mixing time under various rates of $\beta$-mixing. 
Refer to the supplementary material for the complete proof.
\end{proof}

As a consequence of Lemma~\ref{lemma:large-deviation-bound-average-mixing-time}, in both the sub-exponential and polynomial settings, provided an assumption on the convergence order $b$, estimates on the average-mixing time to proximity $\Theta(\delta)$ yield corresponding deviation bounds. 
In the uniformly ergodic setting, corresponding to the assumption of the exponential setting ($b = 1$), we obtain a deviation inequality involving the average-mixing time in lieu of the mixing time \citep[Theorem~3.1]{chung2012chernoff}, \citep[Corollary 2.10]{paulin2015concentration}.
The lemma generalizes to non-centered functions, lower deviations and arbitrarily bounded functions. It is also noteworthy that in the sub-exponential setting, the convergence order $b$ solely affects the constant in the sample complexity. Our formulation above serves as motivation for analyzing and inferring the average-mixing time.

\begin{remark}
The astute reader will observe that, in the uniformly ergodic setting, the logarithmic dependence in the confidence parameter $\delta$ is slightly sharper for bounds involving the worst-case mixing time.
    \citep[Theorem~3.1]{chung2012chernoff}, \citep[Corollary 2.10]{paulin2015concentration}. This is a consequence of our proof technique relying on $\beta$-mixing.
\end{remark}

\subsection{Under geometric ergodicity}

In this section, we briefly examine the average-mixing time under geometric ergodicity, first using spectral methods under time-reversibility, and then through more broadly applicable Lyapunov-type methods.

\subsubsection{Spectral methods under reversibility}
\label{section:spectral-analysis}
Spectral methods are known to provide a direct and concise framework for analyzing the worst-case mixing time.
In this section, we briefly examine their applicability to the study of the average-mixing time.
We endow $\bbR^{\calX}$ with the inner product $\langle f,g \rangle_{\pi} \eqdef \sum_{x \in \calX} f(x)g(x)\pi(x)$,
where $\pi$ is a positive distribution.
We regard $P$ as a linear operator on the resulting Hilbert space $\ell_2(\pi) \eqdef (\bbR^{\calX}, \langle \cdot, \cdot \rangle_{\pi})$ and further assume $P$ to be geometrically ergodic---refer to \eqref{eq:geometric-ergodicity}.
Recall that $P \in \calW^\star(\calX)$ is called $\pi$-reversible when the following detailed-balance equation holds,
\begin{equation*}
    \pi(x)P(x,x') = \pi(x')P(x',x), \qquad \forall x,x' \in \calX,
\end{equation*}
which is equivalent to stating that $P$ is self-adjoint in $\ell_2(\pi)$.
Under reversibility, the spectrum of $P$, denoted $\Spec(P)$, is contained in the real line. The absolute spectral gap of $P$ is defined as
\begin{equation}
\label{definition:absolute-spectral-gap}
    \gamma_\star \eqdef 1 - \sup \set{ \abs{\lambda} \colon \lambda \in \Spec(P), \lambda \neq 1 }.
\end{equation}
Geometric ergodicity and existence of a positive spectral gap are known to be equivalent \citep{kontoyiannis2012geometric}.
We first establish that, at least under mild assumptions on $P$, the average-mixing time can be controlled by the relaxation time $\trel \eqdef 1/\gamma_\star$, even over countably infinite spaces.

\begin{lemma}
\label{lemma:spectral-upper-bound}
    Let $P \in \calW^\star(\calX)$ be $\pi$-stationary, geometrically ergodic, reversible and let $\xi \in (0,1)$.
    When $P$ regarded as the linear operator $P \colon \ell_2(\pi) \to \ell_2(\pi)$
    is compact\footnote{For instance, being trace-class is a sufficient condition for $P$ to be compact.}, it holds that
    \begin{equation*}
    \atmix(\xi) \leq \trel \log \frac{\nrm{\pi}_{1/2}}{2 \xi},
    \end{equation*}
    where $\trel$ is the relaxation time of $P$ defined by $\trel = 1/\gamma_\star$ with $\gamma_\star$ the absolute spectral gap of $P$ defined in \eqref{definition:absolute-spectral-gap}.
\end{lemma}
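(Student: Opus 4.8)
The plan is to bound the averaged distance $\beta(t)=\sum_{x}\pi(x)\tv{e_xP^t-\pi}$ and then read off $\atmix(\xi)$ from the definition. The route I would take is to pass through the $\chi^2$-divergence, which reversibility and compactness render spectrally transparent, bound the total variation distance \emph{from each fixed starting state}, and only afterwards average against $\pi$. Since $P$ is reversible it is self-adjoint on $\ell_2(\pi)$, and since it is assumed compact, the spectral theorem for compact self-adjoint operators supplies a countable orthonormal basis $\{\phi_j\}_{j\ge0}$ of $\ell_2(\pi)$ made of eigenfunctions, with real eigenvalues $1=\lambda_0>|\lambda_1|\ge|\lambda_2|\ge\cdots\to0$ and $\phi_0\equiv1$; geometric ergodicity guarantees $\gamma_\star>0$, so writing $\lambda_\star\eqdef1-\gamma_\star<1$ we have $|\lambda_j|\le\lambda_\star$ for every $j\ge1$.

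Fix a starting state $x\in\calX$. First I would note that the normalised indicator $g_x\eqdef e_x/\pi(x)$ lies in $\ell_2(\pi)$, since $\nrm{g_x}_\pi^2=1/\pi(x)<\infty$, and that its coordinates in the eigenbasis are $\langle g_x,\phi_j\rangle_\pi=\phi_j(x)$. Reversibility identifies the density of $e_xP^t$ with respect to $\pi$ as $P^tg_x$, so Parseval yields the $\chi^2$-divergence $\chi^2(e_xP^t\|\pi)\eqdef\sum_{y\in\calX}\pi(y)\bigl(\tfrac{e_xP^t(y)}{\pi(y)}-1\bigr)^2=\sum_{j\ge1}\lambda_j^{2t}\phi_j(x)^2$. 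Bounding each eigenvalue by $\lambda_\star$ and invoking the completeness relation $\sum_{j\ge0}\phi_j(x)^2=1/\pi(x)$ gives the pointwise estimate $\chi^2(e_xP^t\|\pi)\le\lambda_\star^{2t}\bigl(1/\pi(x)-1\bigr)\le\lambda_\star^{2t}/\pi(x)$. The elementary Cauchy--Schwarz inequality $2\tv{e_xP^t-\pi}\le\sqrt{\chi^2(e_xP^t\|\pi)}$ then produces $\tv{e_xP^t-\pi}\le\lambda_\star^t/(2\sqrt{\pi(x)})$.

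It then remains to average against $\pi$ and to invert. Summing over $x$ with weight $\pi(x)$, the factor $\sqrt{\pi(x)}$ in the denominator conspires with $\pi(x)$ to give $\beta(t)\le\frac{\lambda_\star^t}{2}\sum_{x\in\calX}\sqrt{\pi(x)}=\frac{\lambda_\star^t}{2}\nrm{\pi}_{1/2}^{1/2}$, and since $\nrm{\pi}_{1/2}\ge1$ one may further relax $\nrm{\pi}_{1/2}^{1/2}\le\nrm{\pi}_{1/2}$. Using $\lambda_\star\le e^{-\gamma_\star}=e^{-1/\trel}$, the right-hand side falls below $\xi$ as soon as $t\ge\trel\log\frac{\nrm{\pi}_{1/2}}{2\xi}$, which by the definition of $\atmix(\xi)$ is exactly the asserted bound.

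The main obstacle is not the arithmetic but the rigorous justification of the spectral expansion in the infinite-dimensional setting. One must confirm that compactness, rather than mere self-adjointness, is what delivers a genuine orthonormal eigenbasis with eigenvalues accumulating only at $0$; check that $e_x/\pi(x)\in\ell_2(\pi)$ so that the Parseval and completeness identities apply coordinatewise; and verify that the interchange of summations implicit in $\chi^2(e_xP^t\|\pi)=\sum_{j\ge1}\lambda_j^{2t}\phi_j(x)^2$ is legitimate. Each of these hinges on the finiteness $1/\pi(x)<\infty$ and on the spectral decay $|\lambda_j|\le\lambda_\star<1$ secured by geometric ergodicity. Once these analytic points are settled, the remaining $\chi^2$-to-total-variation and averaging steps are entirely routine.
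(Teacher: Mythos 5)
Your proof is correct, and it takes a recognizably different route from the paper's. The paper expands $P^t(x,x')/\pi(x') - 1$ in the eigenbasis and applies Cauchy--Schwarz \emph{pointwise in $(x,x')$} to get $\abs{P^t(x,x')/\pi(x') - 1} \leq (1-\gamma_\star)^t/\sqrt{\pi(x)\pi(x')}$, then sums the double series to obtain $\beta(t) \leq \tfrac{1}{2}(1-\gamma_\star)^t \left(\sum_x \sqrt{\pi(x)}\right)^2 = \tfrac{1}{2}(1-\gamma_\star)^t \nrm{\pi}_{1/2}$. You instead fix the starting state $x$, use Parseval to evaluate $\chi^2(e_xP^t\|\pi) = \sum_{j\ge1}\lambda_j^{2t}\phi_j(x)^2$ \emph{exactly}, bound it by $\lambda_\star^{2t}/\pi(x)$ via the completeness relation, pass to total variation by Cauchy--Schwarz over $x'$, and only then average over $x$. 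Because you exploit orthogonality in the $x'$-sum before taking absolute values, you land on the strictly sharper intermediate bound $\beta(t)\leq \tfrac{1}{2}\lambda_\star^t \sum_x\sqrt{\pi(x)} = \tfrac{1}{2}\lambda_\star^t\nrm{\pi}_{1/2}^{1/2}$, which you then deliberately relax (using $\nrm{\pi}_{1/2}\ge 1$) to match the stated inequality; without that relaxation your argument would prove $\atmix(\xi)\leq\trel\log\bigl(\nrm{\pi}_{1/2}^{1/2}/(2\xi)\bigr)$. Both proofs rest on the same spectral inputs (compactness plus self-adjointness yielding an orthonormal eigenbasis with $\phi_0\equiv 1$, the identity $\sum_j\phi_j(x)^2=1/\pi(x)$, and $\abs{\lambda_j}\le 1-\gamma_\star$ for $j\ge 1$), and your closing remarks correctly flag the analytic points ($e_x/\pi(x)\in\ell_2(\pi)$, legitimacy of the Parseval interchange) that make the infinite-dimensional expansion rigorous. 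In short: a valid alternative derivation, slightly tighter where it counts, at the cost of one extra (harmless) relaxation step to recover the exact constant in the lemma.
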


The half quasi-norm of the stationary distribution $\inrm{\pi}_{1/2}$, making its first appearance in our study, is a natural measure of complexity of the stationary distribution $\pi$, directly related to its R\'{e}nyi entropy of order $1/2$. 
Note that $\inrm{\pi}_{1/2}$ could be infinite, in which case Lemma~\ref{lemma:spectral-upper-bound} becomes vacuous.
However, Lemma~\ref{lemma:spectral-upper-bound} compares favorably with the bound of \citet[Theorem~5, Corollary~2]{munch2023mixing}, which is inversely proportional to the minimum (non-zero) transition probability $P_\star \eqdef \min_{x,x' \in \calX} P(x,x')$.
Additionally, Lemma~\ref{lemma:spectral-upper-bound} can be instructively compared with \citet[Theorem~12.4]{levin2009markov} for the mixing time $\tmix(\xi)$,
\begin{equation*}
    \tmix(\xi) \leq  \trel \log \frac{1}{\xi \pi_\star},
\end{equation*}
where $\pi_\star \eqdef \min_{x \in \calX} \pi(x)$,
which is only informative when $\abs{\calX} < \infty$.

\subsubsection{Lyapunov methods}
\label{section:lyapunov-analysis}
Following a classical Banach space structure,  
the definitions of the $V$-norm of a function $f$ on $\calX$ and a signed measure $\mu$ on $\calX$ are given \citep[Definition~D.3.1]{douc2018markov} respectively as ,
\begin{equation*}
    \nrm{f}_V \eqdef \sup_{x \in \calX} \frac{\abs{f(x)}}{V(x)} \qquad \nrm{\mu}_V \eqdef \sum_{x \in \calX} \abs{\mu(x)} V(x) = \sup_{\nrm{f}_V \leq 1} \abs{\sum_{x \in \calX} f(x) \mu(x)}.
\end{equation*}
We say that $P$ satisfies $V$-geometrical ergodicity \citep{herve2020v} when there exists and $\rho_V \in (0,1)$ such that for all $x \in \calX$ and all $t \in \bbN$,
\begin{equation*}
    \nrm{e_x P^t - \pi}_V \leq V(x) \rho_V^t.
\end{equation*}
In particular, $P$ is known to be $V$-geometrically ergodic under irreducibility, aperiodicity, drift and minorization conditions
\citep{herve2020v}.
In this case, observe that we can bound the stationary $\beta$-coefficient as follows,
\begin{equation*}
\begin{split}
\beta(t) &= \sum_{x \in \calX} \pi(x) \tv{e_xP^t - \pi} \\
&= \frac{1}{2} \sum_{x \in \calX} \pi(x) \sum_{x' \in \calX} \abs{P^t(x,x') - \pi(x')} \\
&\leq \frac{1}{2} \sum_{x \in \calX} \pi(x) \sum_{x' \in \calX} \abs{P^t(x,x') - \pi(x')}V(x') \\
&\leq \frac{1}{2} \sum_{x \in \calX} \pi(x) V(x) \rho_V^t \\
&= \frac{1}{2} \rho_V^t \nrm{\pi}_V. \\
\end{split}
\end{equation*}
As a result,
\begin{equation*}
    \atmix(\xi) \leq \cfrac{\log  \nrm{\pi}_V + \log \frac{1}{2\xi} }{\log \frac{1}{\rho_V}},
\end{equation*}
which is finite whenever $\nrm{\pi}_V = \E[\pi]{V}$ is finite.

\section{Estimation of average convergence from a single trajectory}
\label{section:estimation}
To apply the results in Section~\ref{section:average-mixing-time}, one needs to have an upper bound on the average-mixing time. However, such bound may not be known a priori, and may need to be estimated from the data.
In this section, we consider the problem of estimating the average-mixing time $\atmix$ from a single trajectory of observations. Specifically, for a user-fixed proximity parameter $\xi \in (0,1)$ and a confidence parameter $\delta \in (0,1)$, our goal is to construct an interval $I_\delta \subset [1, \infty)$, such that given a trajectory $X_1, \dots, X_n$ sampled from an unknown transition matrix $P$, and for $n$ sufficiently large, it holds that $\tmix^{\sharp}(\xi) \in I_\delta$ with probability at least $1 - \delta$.

We first analyze the intermediary problem of estimating the $\beta$-mixing coefficients of the process.
We subsequently show how to convert such results into estimation procedures for the average-mixing time both in the uniformly ergodic and non-uniformly ergodic settings
(refer to Figure~\ref{fig:estimation-structure}).

\begin{figure}
    \centering
    \tikzset{every picture/.style={line width=0.75pt}} %

\begin{tikzpicture}[x=0.65pt,y=0.65pt,yscale=-1,xscale=1]

\draw  [color=wred  ,draw opacity=1 ][fill=wred  ,fill opacity=0.17 ] (321,270.33) -- (464,270.33) -- (464,340.67) -- (321,340.67) -- cycle ;
\draw  [color=wred  ,draw opacity=1 ] (92,121.67) -- (464,121.67) -- (464,185.33) -- (92,185.33) -- cycle ;
\draw  [color=wred  ,draw opacity=1 ][fill=wred  ,fill opacity=0.06 ] (92,185.33) -- (464,185.33) -- (464,270.33) -- (92,270.33) -- cycle ;
\draw [color={rgb, 255:red, 155; green, 155; blue, 155 }  ,draw opacity=1 ] [dash pattern={on 0.84pt off 2.51pt}]  (206,100.33) -- (206,270.33) ;
\draw [color=wred  ,draw opacity=0.47 ]   (321,69.67) -- (321,269.67) ;

\draw (500,279.67) node [anchor=north west][inner sep=0.75pt]  [rotate=-45] [align=left] {Finite $\calX$};
\draw (508,195) node [anchor=north west][inner sep=0.75pt]  [rotate=-45] [align=left] {Uniform\\ergodicity};
\draw (500,126) node [anchor=north west][inner sep=0.75pt]  [rotate=-45] [align=left] {Ergodicity};
\draw (190,75) node [anchor=north west][inner sep=0.75pt]   [align=left] {$\beta(s)$};
\draw (377,75) node [anchor=north west][inner sep=0.75pt]   [align=left] {$\atmix(\xi)$};
\draw (127,101) node [anchor=north west][inner sep=0.75pt]   [align=left] {MAD};
\draw (247,102) node [anchor=north west][inner sep=0.75pt]   [align=left] {PAC};
\draw (124,145) node [anchor=north west][inner sep=0.75pt]   [align=left] {Th.~\ref{theorem:estimation-beta-coefficients-skip-wise-without-tmix}};
\draw (235,145) node [anchor=north west][inner sep=0.75pt]   [align=left] {Th.~\ref{theorem:estimation-beta-coefficients-general-ergodicity-skip-wise}};
\draw (355,145) node [anchor=north west][inner sep=0.75pt]   [align=left] {Th.~\ref{theorem:estimation-average-mixing-time}.1};
\draw (235,220) node [anchor=north west][inner sep=0.75pt]   [align=left] {Th.~\ref{theorem:estimation-beta-coefficients-uniform-ergodicity-skip-wise-pac}};
\draw (355,220) node [anchor=north west][inner sep=0.75pt]   [align=left] {Th.~\ref{theorem:estimation-average-mixing-time}.2};
\draw (124,220) node [anchor=north west][inner sep=0.75pt]   [align=left] {Th.~\ref{theorem:estimation-beta-coefficients-uniform-ergodicity-skip-wise-amd}};
\draw (365,296) node [anchor=north west][inner sep=0.75pt]   [align=left] {Corr. \ref{corollary:average-mixing-time-estimation-finite-space}};
\draw (190,150) node [anchor=north west][inner sep=0.75pt]  [align=left] {$\Longrightarrow$};
\draw (395,255) node [anchor=north west][inner sep=0.75pt]  [rotate=-90] [align=left] {$\Longrightarrow$};
\draw (308,150) node [anchor=north west][inner sep=0.75pt]   [align=left] {$\Longrightarrow$};
\draw (308,225) node [anchor=north west][inner sep=0.75pt]   [align=left] {$\Longrightarrow$};
\draw (190,225) node [anchor=north west][inner sep=0.75pt]   [align=left] {$\Longrightarrow$};

\end{tikzpicture}
    \caption{Logical flow between the estimation results of Section~\ref{section:estimation}. MAD: Mean Absolute Deviation; PAC: Probably Approximately Correct.}
    \label{fig:estimation-structure}
\end{figure}

\subsection{Estimation of \texorpdfstring{$\beta$}{・趣ｽｲ}-mixing coefficients}
\label{section:estimation-beta-mixing-coefficients-individually}

We begin by analyzing a sequence of plug-in estimators for the $\beta$-mixing coefficients.
Our analysis relies on stationary skipped trajectories, where for $s \in \bbN, s < n$, we denote
\begin{equation*}
\begin{split}
X^{(s)} \eqdef X_{1}, X_{1 + s}, X_{1 + 2s}, \dots, X_{1 + \floor{(n-1)/s}s},
\end{split}
\end{equation*}
with transition operator $P^s$ and initial distribution $\pi$.
It will be convenient to define the following counting random variables,
\begin{equation*}
\begin{split}
N_{x}^{(s)} &\eqdef \sum_{t=1}^{\floor{(n-1)/s}} \pred{X_{1 + s(t-1)} = x}, 
\\ N_{xx'}^{(s)} &\eqdef \sum_{t=1}^{\floor{(n-1)/s}} \pred{X_{1 + s(t-1)} = x, X_{1 + st} = x'}.
\end{split}
\end{equation*}
We use the shortcut notation $N_x = N_x^{(1)}$ and $N_{xx'} = N_{xx'}^{(1)}$ to respectively denote number of visits to state $x$ and transitions from $x$ to $x'$ on the original chain.
For $s <n$, we define the following estimator $\widehat{\beta}(s) \colon \calX^n \to [0,1]$ for $\beta(s)$,
\begin{equation}
\label{eq:estimator-beta-s}
    \widehat{\beta}(s) \eqdef \frac{1}{2 \floor{(n-1)/s}} \sum_{x, x' \in \calX}  \abs{N_{x x'}^{(s)} -  \frac{N^{(s)}_x N^{(s)}_{x'}}{\floor{(n-1)/s}}},
\end{equation}
and by convention, we set $\widehat{\beta}(s) \eqdef 0$ for $s \geq n$.

\subsubsection{Under general ergodicity condition}
\label{section:estimation-beta-mixing}
Under general---possibly non uniform---ergodicity, the mixing time may be infinite, and the $\beta$-mixing coefficients may decay at arbitrary slow rate.
We first analyze the convergence properties of our estimator $\widehat{\beta}(s)$ under an arbitrary $\beta$-mixing rate,
 and will subsequently focus on exponential, sub-exponential and polynomial rates of convergence (refer to Definition~\ref{definition:beta-mixing-flavors}).

\begin{theorem}[Mean Absolute Deviation]
    \label{theorem:estimation-beta-coefficients-skip-wise-without-tmix}
Let $s, n \in \bbN$ with $s < n$, let $p \in \bbR_+$, with $p \geq 1$, and
let $\widehat{\beta}(s) \colon \calX^n \to [0,1]$ be the estimator  defined in \eqref{eq:estimator-beta-s}.
 For any $\pi$-stationary ergodic Markov chain $X_1, X_2, \dots, X_n$ with transition operator $P \in \calW(\calX)$, it holds that
 \begin{equation*}
\begin{split}
    \bbE_\pi \abs{ \widehat{\beta}(s) - \beta(s) } 
    \leq 3 \sqrt{\frac{ \left(1/2 + \calB_p^{(s)}\right) \calJ_p^{(s)}}{\floor{(n-1)/s}}  },
\end{split}
\end{equation*}
with
\begin{equation*}
    \calB_{p}^{(s)} \eqdef \sum_{t = 0}^{\floor{(n-1)/s} - 1} \beta(st)^{1/p}, \qquad \calJ_p^{(s)} \eqdef \nrm{Q^{(s)}}^{1 - 1/p}_{(1 - 1/p)/2},
\end{equation*}
where for $s \in \bbN$, $\beta(s)$ follows Definition~\ref{definition:stationary-beta-mixing-coefficient}, for $x,x' \in \calX$, we wrote $Q^{(s)}(x,x') = \pi(x) P^s(x,x')$, and for $q \in \bbR_+$,
\begin{equation*}
    \nrm{Q^{(s)}}^{q}_{q} \eqdef \sum_{(x,x')\in \calX^2} Q^{(s)}(x,x')^q.
\end{equation*}
\end{theorem}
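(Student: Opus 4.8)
The plan is to exploit the structural parallel between the estimand and its estimator. Writing $m \eqdef \floor{(n-1)/s}$, $\widehat{\pi}(x) \eqdef N^{(s)}_x/m$ and $\widehat{Q}^{(s)}(x,x') \eqdef N^{(s)}_{xx'}/m$, both quantities are half-$\ell_1$ distances between a bivariate law and the product of its marginals,
\begin{equation*}
\beta(s) = \frac{1}{2}\sum_{x,x'} \abs{Q^{(s)}(x,x') - \pi(x)\pi(x')}, \qquad \widehat{\beta}(s) = \frac{1}{2}\sum_{x,x'} \abs{\widehat{Q}^{(s)}(x,x') - \widehat{\pi}(x)\widehat{\pi}(x')},
\end{equation*}
where for $\beta(s)$ one uses $\tv{e_x P^s - \pi} = \frac{1}{2}\sum_{x'}\abs{P^s(x,x') - \pi(x')}$ and $\pi(x) \geq 0$. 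Applying the reverse triangle inequality $\abs{\abs{a}-\abs{b}} \leq \abs{a-b}$ entrywise and splitting the difference of products gives
\begin{equation*}
\abs{\widehat{\beta}(s) - \beta(s)} \leq \frac{1}{2}\sum_{x,x'}\abs{\widehat{Q}^{(s)}(x,x') - Q^{(s)}(x,x')} + \frac{1}{2}\sum_{x,x'}\abs{\widehat{\pi}(x)\widehat{\pi}(x') - \pi(x)\pi(x')}.
\end{equation*}
Using $\widehat{\pi}(x)\widehat{\pi}(x') - \pi(x)\pi(x') = \widehat{\pi}(x)(\widehat{\pi}(x')-\pi(x')) + \pi(x')(\widehat{\pi}(x)-\pi(x))$ together with $\sum_x \widehat{\pi}(x) = \sum_x \pi(x) = 1$ (both are genuine probability vectors, since $\sum_x N^{(s)}_x = m$), the second term collapses to $\nrm{\widehat{\pi}-\pi}_1$. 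Thus the error is governed by the $\ell_1$ estimation errors of the empirical joint law and of the empirical marginal.

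Next I would bound these two $\ell_1$ errors in expectation. Both $\widehat{Q}^{(s)}(x,x')$ and $\widehat{\pi}(x)$ are empirical averages over the $m$ steps of the stationary skipped chain of the indicators $Z^{(xx')}_t \eqdef \pred{X_{1+s(t-1)} = x, X_{1+st} = x'}$ and $W^{(x)}_t \eqdef \pred{X_{1+s(t-1)} = x}$, with means $Q^{(s)}(x,x')$ and $\pi(x)$ respectively; by Jensen's inequality $\bbE_\pi \abs{\cdot} \leq \sqrt{\Var[\pi]{\cdot}}$ it suffices to bound the variances. Expanding $\Var[\pi]{\textstyle\sum_{t} Z^{(xx')}_t}$ as a sum of covariances and invoking stationarity, the diagonal contributes at most $m\,Q^{(s)}(x,x')$, whereas for the lag-$k$ covariance I would interpolate the trivial bound $\abs{\Cov} \leq Q^{(s)}(x,x')$ against the mixing bound $\abs{\Cov} \leq \beta(s(k-1))$ --- the gap between the two length-$s$ blocks in the original chain being $s(k-1)$, and the stationary $\beta$-mixing coefficient of a Markov chain at lag $\tau$ being exactly $\beta(\tau)$ by Remark~\ref{remark:notation-choice-beta}. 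Since $0 \leq c \leq a$ and $0 \leq c \leq b$ force $c = c^{1-1/p}c^{1/p} \leq a^{1-1/p}b^{1/p}$, this yields $\abs{\Cov} \leq Q^{(s)}(x,x')^{1-1/p}\beta(s(k-1))^{1/p}$; summing over lags produces $\calB_p^{(s)} = \sum_{t}\beta(st)^{1/p}$ while the diagonal supplies the additive $1/2$ (after factoring $1 + 2\calB_p^{(s)} = 2(1/2+\calB_p^{(s)})$). Hence
\begin{equation*}
\bbE_\pi \abs{\widehat{Q}^{(s)}(x,x') - Q^{(s)}(x,x')} \leq \sqrt{\frac{2\left(1/2 + \calB_p^{(s)}\right)}{m}}\; Q^{(s)}(x,x')^{(1-1/p)/2},
\end{equation*}
and the identical argument applied to $W^{(x)}_t$ (point blocks, gap $sk$) gives the same bound with $\pi(x)^{(1-1/p)/2}$ in place of $Q^{(s)}(x,x')^{(1-1/p)/2}$.

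Finally I would sum these per-entry estimates over the state space, where the surviving exponent $(1-1/p)/2$ is tuned precisely to reproduce the entropic quantity $\calJ_p^{(s)}$. Unwinding the definition of $\nrm{\cdot}_{(1-1/p)/2}$ gives $\sum_{x,x'}Q^{(s)}(x,x')^{(1-1/p)/2} = \sqrt{\calJ_p^{(s)}}$, and since $t \mapsto t^{(1-1/p)/2}$ dominates the identity on $[0,1]$ one has $\sum_{x'}P^s(x,x')^{(1-1/p)/2} \geq \sum_{x'}P^s(x,x') = 1$, whence $\sum_x \pi(x)^{(1-1/p)/2} \leq \sqrt{\calJ_p^{(s)}}$. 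Consequently the joint-law term, carrying its prefactor $1/2$, is at most $\frac{1}{\sqrt{2}}\sqrt{(1/2+\calB_p^{(s)})\calJ_p^{(s)}/m}$, and the marginal term $\nrm{\widehat{\pi}-\pi}_1$ at most $\sqrt{2}$ times the same square root, so their sum is at most $\frac{3}{\sqrt{2}} \leq 3$ times $\sqrt{(1/2+\calB_p^{(s)})\calJ_p^{(s)}/m}$, which is the claim.

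The step I expect to be the main obstacle is the variance estimate, where two points demand care. First, the covariance inequality applies only to genuinely separated blocks, whereas adjacent blocks ($k=1$) share the single state $X_{1+st}$; this overlapping term is not controlled by mixing and I would instead absorb it through the trivial bound into the leading constant (the surplus between $3/\sqrt{2}$ and $3$ leaves room for this and for the exact constant in the covariance inequality). Second, and more essentially, the interpolation that preserves the factor $Q^{(s)}(x,x')^{1-1/p}$ is exactly what renders the otherwise infinite sum over the countable state space summable and equal to $\calJ_p^{(s)}$: the naive bound $\abs{\Cov} \leq \beta(s(k-1))$ with no marginal factor would leave a divergent $\sum_{x,x'} 1$ in the denumerable regime. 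Everything else is routine bookkeeping comfortably absorbed by the stated constant.
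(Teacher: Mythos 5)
Your proof is correct and its architecture coincides with the paper's: the same split of $\abs{\widehat{\beta}(s)-\beta(s)}$ into a joint-law term and a marginals term, Jensen's inequality to pass to variances, a covariance expansion with the H\"older-type interpolation in the exponent $1-1/p$, and the closing observation $\nrm{\pi}_{(1-1/p)/2}\leq\nrm{Q^{(s)}}_{(1-1/p)/2}$ to merge the two contributions into $\calJ_p^{(s)}$. You diverge in two technical sub-steps. First, where you interpolate the trivial bound $\abs{\Cov}\leq Q^{(s)}(x,x')$ against the mixing bound $\abs{\Cov}\leq\beta(s(k-1))$ by hand, the paper invokes Rio's covariance inequality (its Lemma~\ref{lemma:variance-markov-visit-counts}, via $\alpha$-mixing and $2\alpha(t)\leq\beta(t)$); for indicator variables the two give the same functional form, and your elementary version is self-contained and even saves a constant. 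Second, for the lag-one covariance of the pair indicators---the overlapping-blocks issue you correctly flag---the paper does not resort to a trivial bound: it passes to the Hudson expansion (the sliding-window chain on pairs), shows its stationary $\beta$-mixing coefficients satisfy $\overline{\beta}^{(s)}(t+1)=\beta(st)$, and thereby bounds the lag-one term by $\beta(0)^{1/p}Q^{(s)}(x,x')^{1-1/p}$, which is precisely the $t=0$ summand of $\calB_p^{(s)}$. Your cruder replacement of $\beta(0)^{1/p}$ by $1$ inflates $1/2+\calB_p^{(s)}$ to at most $3/2+\calB_p^{(s)}\leq 3\left(1/2+\calB_p^{(s)}\right)$ in the joint term only, so the final constant becomes $\sqrt{3/2}+\sqrt{2}\approx 2.64$, still below the stated $3$, exactly as you anticipated. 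Both routes deliver the theorem; the paper's is marginally tighter, yours avoids introducing the expanded chain.
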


\begin{theorem}[$(\eps, \delta)$-PAC Bound]
\label{theorem:estimation-beta-coefficients-general-ergodicity-skip-wise}
Let $\eps, \delta \in (0,1)$, $s, n \in \bbN$ with $n > 2s + 1$, $p \in \bbR_+$, with $p \geq 1$ and let $\widehat{\beta}(s) \colon \calX^n \to [0,1]$ be the estimator  defined in \eqref{eq:estimator-beta-s}.
There exists a universal constant $\Ce \in \bbR_+$ such that for any ergodic stationary Markov chain $X_1, X_2, \dots, X_n$ with transition operator $P \in \calW(\calX)$, when
\begin{equation*}
    n \geq 1 + \Ce \frac{\log(1/\delta)}{\eps^2} \max \set{ \frac{s}{\eps^2} \calB_p^{(s)} \calJ^{(s)}_p, \atmix\left(\xi(\eps, \delta) \right) }, \qquad \xi(\eps,\delta) = \frac{ \eps^2 \delta}{\Ce \log(1/\delta)},
\end{equation*}
    with probability at least $1 - \delta$, it holds that
    \begin{equation*}
        \abs{\widehat{\beta}(s) - \beta(s)} \leq \eps.
    \end{equation*}
\end{theorem}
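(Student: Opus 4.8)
\section*{Proof proposal}

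The plan is to split the error into a deterministic bias and a stochastic fluctuation,
\begin{equation*}
\abs{\widehat{\beta}(s) - \beta(s)} \leq \abs{\widehat{\beta}(s) - \bbE_\pi \widehat{\beta}(s)} + \abs{\bbE_\pi\widehat{\beta}(s) - \beta(s)},
\end{equation*}
and to force each summand below $\eps/2$. The bias is immediate from the Mean Absolute Deviation bound already established: by Jensen's inequality $\abs{\bbE_\pi\widehat{\beta}(s)-\beta(s)} \leq \bbE_\pi\abs{\widehat{\beta}(s)-\beta(s)}$, so Theorem~\ref{theorem:estimation-beta-coefficients-skip-wise-without-tmix} bounds it by $3\sqrt{(1/2+\calB_p^{(s)})\calJ_p^{(s)}/\floor{(n-1)/s}}$, which drops below $\eps/2$ once $\floor{(n-1)/s} \gtrsim \calB_p^{(s)}\calJ_p^{(s)}/\eps^2$, equivalently $n-1 \gtrsim s\,\calB_p^{(s)}\calJ_p^{(s)}/\eps^2$. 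This is the origin of the first term inside the maximum.

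For the fluctuation I would first establish a bounded-differences property for $\widehat{\beta}(s)$ viewed as a function of the skipped trajectory $X^{(s)}$, which has $m+1$ entries, writing $m \eqdef \floor{(n-1)/s}$. Altering a single entry of $X^{(s)}$ modifies only the two adjacent skip-transition counts $N^{(s)}_{xx'}$ and the two visit counts $N^{(s)}_x$ that it touches; propagating this through the products $N^{(s)}_x N^{(s)}_{x'}/m$ and summing with the triangle inequality changes the inner sum in \eqref{eq:estimator-beta-s} by $\bigO(1)$, so after the prefactor $1/(2m)$ each coordinate has bounded difference $c = \bigO(1/m)$.

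The key step is a concentration inequality for this bounded-differences statistic \emph{under dependence}. Since $X^{(s)}$ is itself a Markov chain (with operator $P^s$) whose $\beta$-mixing coefficients are $k \mapsto \beta(sk)$, I would invoke the blocking device of \citet{yu1994rates} already used for Lemma~\ref{lemma:large-deviation-bound-average-mixing-time}: partition the skipped chain into $R$ blocks separated by gaps and couple it, via the coupling underlying \citet{yu1994rates}, to a sequence of \emph{independent} blocks, paying a decorrelation cost of at most $(R-1)\beta(\text{gap})$. Taking the gap equal to $\atmix(\xi(\eps,\delta))$ makes each boundary cost at most $\xi(\eps,\delta)$, and applying McDiarmid's inequality to the independent-block version---whose per-block bounded difference is $\bigO(\text{block length}/m)$---yields a tail of order $\exp(-c\eps^2 R)$. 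Calibrating $R \asymp \log(1/\delta)/\eps^2$ makes this tail at most $\delta$ while keeping the total decorrelation budget $R\,\xi(\eps,\delta) \lesssim \delta$, and the induced block/gap requirement is exactly $n \gtrsim \tfrac{\log(1/\delta)}{\eps^2}\atmix(\xi(\eps,\delta))$, the second term in the maximum. Since the bias bound is non-random, no union bound is needed: on the event of probability at least $1-\delta$ on which the fluctuation is at most $\eps/2$, the two contributions add to at most $\eps$.

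The main obstacle is precisely this concentration step: McDiarmid requires genuine independence, which the trajectory does not possess, so the delicate part is arranging the blocking so that the $\beta$-mixing penalty is tracked correctly across all block boundaries while the Gaussian-type concentration remains sharp. It is the tension between needing many blocks (to drive $\exp(-c\eps^2 R)$ below $\delta$, pushing $R$ up) and needing a small per-boundary decorrelation cost (forcing the gap, hence $\xi(\eps,\delta)$, down) that dictates the calibration $\xi(\eps,\delta) = \eps^2\delta/(\Ce\log(1/\delta))$ and produces the two-term maximum in the sample complexity.
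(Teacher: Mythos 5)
Your high-level plan (bias plus fluctuation, blocking plus concentration) is close in spirit to the paper's, but the crux of your argument---the concentration step---has a genuine hole exactly where you flag ``the delicate part''. You propose to couple the skipped chain to independent blocks via \citet{yu1994rates} and then apply McDiarmid to ``the independent-block version'' of $\widehat{\beta}(s)$. But Yu's decoupling lemma applies to functions of \emph{one interleaved family of blocks only}, with the complementary observations serving as the gaps that buy the decorrelation; the statistic $\widehat{\beta}(s)$ depends on \emph{every} coordinate of $X^{(s)}$, including the observations sitting in your gaps and the transitions straddling block boundaries. Consecutive blocks with no gap between them are not decoupled at cost $\beta(\text{gap})$, so the event $\{\lvert\widehat{\beta}(s)-\bbE_\pi\widehat{\beta}(s)\rvert>\eps/2\}$ cannot be transferred to an independent-block model as written. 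The missing ingredient is the sub-additivity of the underlying $\ell_1$-type deviations over an odd/even partition of the blocks: the paper first splits $2\lvert\widehat{\beta}(s)-\beta(s)\rvert$ into $\sum_{x,x'}\lvert N^{(s)}_{xx'}/m-Q^{(s)}(x,x')\rvert$ plus $2\sum_x\lvert N^{(s)}_x/m-\pi(x)\rvert$, then bounds each $\ell_1$ norm by an average of per-block $\ell_1$ statistics $Z^{(\sigma)}_b$ via the triangle inequality, so that the odd-indexed statistics are a function of the odd blocks alone (with the even blocks as gaps) and Yu's lemma genuinely applies. Without this step your McDiarmid application does not go through; with it, McDiarmid on the block-level function is essentially the same as the paper's Hoeffding on the bounded variables $Z^{(\sigma)}_b\le 2$.

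A second, smaller discrepancy: you control the bias $\lvert\bbE_\pi\widehat{\beta}(s)-\beta(s)\rvert$ with the full-sample MAD bound of Theorem~\ref{theorem:estimation-beta-coefficients-skip-wise-without-tmix}, which only needs $n\gtrsim s\,\calB_p^{(s)}\calJ_p^{(s)}/\eps^2$. The paper instead controls the expectation \emph{per block}, $\bbE_\pi Z_b^{(\sigma)}\lesssim\sqrt{\calB_p\,\calJ_p/r}$, which decays with the block length $r$ rather than with $n$; this is what forces $r\gtrsim\calB_p\calJ_p/\eps^2$ and produces the $\tfrac{\log(1/\delta)}{\eps^2}\cdot\tfrac{s}{\eps^2}\calB_p^{(s)}\calJ_p^{(s)}$ term in the stated sample complexity. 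Your centering is also not quite consistent: after coupling, McDiarmid concentrates the statistic around the \emph{independent-block} mean, not around $\bbE_\pi\widehat{\beta}(s)$, so the discrepancy $\bigO(R\,\beta(\text{gap}))$ between the two means must be accounted for (it is harmless, but it is not free). If you repair the decoupling with the parity split, your variant would in fact yield a slightly sharper bound than the theorem states, but as written the central step fails.
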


In Theorem~\ref{theorem:estimation-beta-coefficients-skip-wise-without-tmix} and Theorem~\ref{theorem:estimation-beta-coefficients-general-ergodicity-skip-wise}, we observe that the two quantities $\calB_{p}^{(s)}$ and $\calJ_p^{(s)}$, discussed below, control the estimation rate.
In infinitely denumerable state spaces, for any $s \in \bbN$,
the well-definition of the estimation rate 
therefore hinges on the existence of a $p \in [1, \infty)$ such that both $\calB_{p}^{(s)}$ and $\calJ_p^{(s)}$ are finite.

\textbf{The $\beta$-mixing parameter $\calB_p^{(s)}$. } This parameter summarizes the $\beta$-mixing properties of the process and is controlled from above by the $\ell_{1/p}$ pseudo-norm of the sequence of the $\beta^{(s)}$-mixing coefficients,
\begin{equation*}
    \calB_p^{(s)} \leq \nrm{\beta^{(s)}}_{1/p}^{1/p}.
\end{equation*}
It will be convenient to introduce the shorthand notation $\calB_p \eqdef \calB_p^{(1)}$ to refer to the sum of the $\beta$-mixing coefficients.
It is instructive to observe that since $\beta(s) \leq 1$ for any $s \in \bbN$, $\calB_{p}^{(s)}$ increases together with $p$, and
\begin{equation*}
    \lim_{p \to \infty} \calB_{p}^{(s)} \asymp \frac{n}{s}.
\end{equation*}
Additionally, 
\begin{equation*}
    r \leq s \implies \calB_p^{(r)} \geq \calB_p^{(s)}.
\end{equation*}
In the following Lemma~\ref{lemma:bound-mixing-factor}, we readily provide more explicit upper bounds on $\calB_{p}^{(s)}$ under additional assumptions on the mixing rate.

\begin{lemma}
\label{lemma:bound-mixing-factor}
Let $s, n \in \bbN$ with $s < n$, let $p \in \bbR_+, p \geq 1$.
 For any $\pi$-stationary ergodic 
Markov chain $X_1, X_2, \dots, X_n$, the following statements hold.
\begin{enumerate}
    \item (Exponential) If $\beta(s) \leq \beta_0 \exp(-\beta_1 s)$, with $\beta_0, \beta_1 \in \bbR_+$ and $t \in \bbN \cup \set{0}$,
    \begin{equation*}
\begin{split}
    \calB_p^{(s)} &\leq  \frac{\beta_0^{1/p}}{1 - \exp(- \beta_1 s / p)}. \\
\end{split}
\end{equation*}
\item (Sub-exponential) If $\beta(s) \leq \beta_0 \exp(-\beta_1 s^b)$, with $\beta_0, \beta_1 \in \bbR_+$, $b \in (0,1)$, and $ s \in \bbN \cup \set{0}$.
    \begin{equation*}
\begin{split}
    \calB_p^{(s)} &\leq 1 + \exp(-\beta_1s^b/p) + \frac{\Gamma\left(\frac{1}{b}, s^b \beta_1  / p\right)}{b s (\beta_1  / p)^{1/b}},
\end{split}
\end{equation*}
where $\Gamma$ is the (upper) incomplete Euler gamma function, given by $\Gamma (a,x) = \int _{x}^{\infty}t^{a-1} e^{-t} dt$.
\item (Polynomial) If $\beta(s) \leq \beta_1/s^b$, with $\beta_1 \in \bbR_+$, $b \in (1, \infty)$, $b/p \in (1, \infty)$, $s \in \bbN$ and $\beta(0) = \beta_0 \in \bbR_+$,
\begin{equation*}
\begin{split}
    \calB_p^{(s)} &\leq \frac{\beta_0^{1/p} + \zeta(b/p) \frac{\beta_1^{1/p}}{s^{b/p}}}{\floor{(n-1)/s}},
\end{split}
\end{equation*}
where $\zeta$ is the Riemann zeta function defined by $\zeta (r)=\sum _{t=1}^{\infty }t^{-r}$.
\end{enumerate}
\end{lemma}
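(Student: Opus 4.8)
The plan is to dominate the finite partial sum by its infinite completion and then evaluate (or bound) the resulting scalar series under each decay hypothesis. Since every summand $\beta(st)^{1/p}$ is non-negative, we have at once
\[
\calB_p^{(s)} = \sum_{t=0}^{\floor{(n-1)/s}-1} \beta(st)^{1/p} \leq \sum_{t=0}^{\infty} \beta(st)^{1/p},
\]
and in each of the three regimes I would isolate the $t=0$ term using the trivial bound $\beta(0) \leq 1$ and then control the remaining mass by comparison with a geometric series, an improper integral, or the Riemann zeta function, respectively. The convergence of the completed series is guaranteed in all three cases (geometric ratio $<1$; summable sub-exponential decay; polynomial decay with $b/p>1$), so the over-counting incurred by passing to the infinite sum is harmless.

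For the exponential case I would substitute $\beta(st) \leq \beta_0 \exp(-\beta_1 st)$, giving $\beta(st)^{1/p} \leq \beta_0^{1/p}\exp(-\beta_1 st/p)$. Factoring out $\beta_0^{1/p}$ leaves a geometric series in $t$ with common ratio $\exp(-\beta_1 s/p) \in (0,1)$; summing $\sum_{t\geq 0}$ yields the claimed $\beta_0^{1/p}/(1-\exp(-\beta_1 s/p))$. This step is entirely routine. For the polynomial case I would peel off the $t=0$ term as $\beta_0^{1/p}$, and for $t\geq 1$ insert $\beta(st) \leq \beta_1/(st)^b$ to get $\beta(st)^{1/p} \leq \beta_1^{1/p}s^{-b/p}t^{-b/p}$; factoring out $\beta_1^{1/p}s^{-b/p}$ leaves $\sum_{t\geq 1}t^{-b/p}$, which converges precisely because $b/p>1$ and equals $\zeta(b/p)$ by definition, reproducing the stated bound up to the normalization written in the statement.

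The sub-exponential case is the one requiring care. For $t\geq 1$ I would write $\beta(st)^{1/p} \leq \exp(-\beta_1 s^b t^b/p)$ (using $\beta_0\leq 1$, admissible since $\beta\leq 1$) and bound the $t=0$ term by $1$. The crucial observation is that $g(u)\eqdef \exp(-\beta_1 s^b u^b/p)$ is decreasing on $(0,\infty)$ when $b\in(0,1)$, since $g'(u) = -\beta_1 b s^b u^{b-1} g(u)/p < 0$. Monotonicity justifies the sum--integral comparison $\sum_{t\geq 2} g(t) \leq \int_1^{\infty} g(u)\,du$, so that keeping the $t=0$ term ($\leq 1$) and the $t=1$ term ($g(1)=\exp(-\beta_1 s^b/p)$) separate produces the first two summands. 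It then remains to evaluate the integral: the substitution $w=\beta_1 s^b u^b/p$ (with $a^{1/b}=s(\beta_1/p)^{1/b}$ and the Jacobian $\mathrm{d}u = \frac{1}{b\,a^{1/b}}w^{1/b-1}\mathrm{d}w$) turns it into $\frac{1}{bs(\beta_1/p)^{1/b}}\int_{\beta_1 s^b/p}^{\infty} w^{1/b-1}e^{-w}\,\mathrm{d}w$, which is exactly $\frac{\Gamma(1/b,\,\beta_1 s^b/p)}{bs(\beta_1/p)^{1/b}}$, matching the third summand.

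I expect the sub-exponential integral to be the main obstacle: the exponential and polynomial cases collapse to recognizing a geometric series and the zeta series, whereas here one must verify the monotonicity needed for the sum--integral comparison and then execute the gamma-function substitution so that the constant $a^{1/b}=s(\beta_1/p)^{1/b}$ and the Jacobian line up precisely with the claimed upper-incomplete-gamma expression.
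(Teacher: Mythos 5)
Your proposal is correct and follows essentially the same route as the paper's proof: a geometric series after factoring out $\beta_0^{1/p}$ in the exponential case, peeling off the $t=0$ and $t=1$ terms followed by a sum--integral comparison and the substitution $w=\beta_1 s^b u^b/p$ yielding $\Gamma(1/b,\beta_1 s^b/p)/(bs(\beta_1/p)^{1/b})$ in the sub-exponential case, and the zeta series in the polynomial case. The only loose ends are shared with the paper itself: the paper's derivation keeps the $\beta_0^{1/p}$ prefactors that the statement of the sub-exponential bound drops (your ``take $\beta_0\leq 1$ since $\beta\leq 1$'' does not actually follow, but the mismatch is already present in the source), and both your derivation and the paper's arrive at the polynomial bound $\beta_0^{1/p}+\zeta(b/p)\beta_1^{1/p}s^{-b/p}$ without the $\floor{(n-1)/s}$ appearing in the statement's denominator.
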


\textbf{The entropic parameter $\calJ_p^{(s)}$.}
For $q < 1$, since the $\ell_{q}$ quasi-norm and the $q$th order R\'{e}nyi entropy $H_{q}$ of a distribution $\mu \in \calP(\calX)$
are related by the expression
\begin{equation*}
    \frac{1}{1-q}\log \nrm{\mu}_{q}^{q} = H_{q}(\mu),
\end{equation*}
the quantity $\calJ_{p}^{(s)}$ pertains to an entropic property inherent in the Markov process.
Observe that in contradistinction with $\calB_{p}^{(s)}$, the quantity $\calJ_{p}^{(s)}$ decreases with $p$, leading to a trade-off in the choice of the parameter $p$. A necessary condition on $P$ for a rate to exist is readily given by
\begin{equation}
\label{definition:J-infty}
   \calJ_\infty^{(s)} \eqdef \lim_{p \to \infty}  \calJ_p^{(s)} = \nrm{Q^{(s)}}_{1/2} < \infty,
\end{equation}
that is, the tail of the distribution of stationary pairs of observations separated by a time $s$ must not decay more slowly than an inverse square polynomial.
A more in-depth analysis of $\calJ_p^{(s)}$ is deferred to Section~\ref{section:implications-various-state-space-scales}, where we show how to obtain upper bounds under natural assumptions.

\subsubsection{Under uniform ergodicity}
\label{section:estimation-beta-mixing-uniform-ergodicity}
Under uniform ergodicity, the mixing time is finite and the $\beta$-mixing coefficients decay exponentially. In this setting, we show how to obtain a stronger sample complexity upper bound for high-probability estimates on the individual $\beta$-mixing coefficients. In particular, uniform ergodicity removes the need to consider the trade-off between mixing and entropic parameters mentioned in the previous section.

\begin{theorem}[Mean Absolute Deviation and $(\eps,\delta)$-PAC Bound]
\label{theorem:estimation-beta-coefficients-uniform-ergodicity-skip-wise-amd}
Let $s, n \in \bbN$ with $n > 2s + 1$, and let $\widehat{\beta}(s) \colon \calX^n \to [0,1]$ be the estimator  defined in \eqref{eq:estimator-beta-s}.
There exists a universal constant $\Cue \leq 3\sqrt{2}$ such that
for any $\pi$-stationary uniformly ergodic Markov chain $X_1, X_2, \dots, X_n$ with transition operator $P \in \calW^\star(\calX)$,
\begin{equation*}
\begin{split}
    \bbE_\pi \abs{ \widehat{\beta}(s) - \beta(s) } 
    \leq& \Cue \sqrt{\frac{\tmix + 2s}{n-1} \calJ_{\infty}^{(s)}}. \\
\end{split}
\end{equation*}
where $\tmix$ is the mixing time of $P$ and $\calJ_\infty^{(s)}$ is defined in \eqref{definition:J-infty}.
\end{theorem}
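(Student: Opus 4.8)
The plan is to bypass the free parameter $p$ of Theorem~\ref{theorem:estimation-beta-coefficients-skip-wise-without-tmix} and instead exploit uniform ergodicity directly, which is precisely what lets one push the entropic exponent to the endpoint $\calJ_\infty^{(s)}$ while keeping the mixing contribution summable. First I would record the two identities $\beta(s) = \frac{1}{2}\nrm{Q^{(s)} - \pi\otimes\pi}_1$ and $\widehat{\beta}(s) = \frac{1}{2}\nrm{\widehat{Q}^{(s)} - \widehat{\pi}\otimes\widehat{\pi}}_1$, where $\widehat{Q}^{(s)}(x,x') \eqdef N_{xx'}^{(s)}/\floor{(n-1)/s}$ is the empirical distribution of consecutive pairs of the $s$-skipped stationary chain $Y_t \eqdef X_{1+s(t-1)}$, $\widehat{\pi}(x) \eqdef N_x^{(s)}/\floor{(n-1)/s}$ is its empirical marginal, and $(\pi\otimes\pi)(x,x') \eqdef \pi(x)\pi(x')$. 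Applying the reverse and then the ordinary triangle inequality, together with the product bound $\frac{1}{2}\nrm{\widehat{\pi}\otimes\widehat{\pi} - \pi\otimes\pi}_1 \leq \nrm{\widehat{\pi} - \pi}_1$ (write the difference as $\widehat{\pi}\otimes(\widehat{\pi}-\pi) + (\widehat{\pi}-\pi)\otimes\pi$ and use that $\widehat{\pi},\pi$ are probability vectors), this reduces the claim to controlling $\bbE_\pi\nrm{\widehat{Q}^{(s)} - Q^{(s)}}_1$ and $\bbE_\pi\nrm{\widehat{\pi} - \pi}_1$, i.e.\ the $L_1$ errors of two unbiased empirical distributions of a uniformly ergodic chain.

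Next I would bound each coordinate in mean absolute deviation by its standard deviation, $\bbE_\pi\abs{\widehat{Q}^{(s)}(x,x') - Q^{(s)}(x,x')} \leq \sqrt{\Var{\widehat{Q}^{(s)}(x,x')}}$ and similarly for $\widehat{\pi}$, and then bound these variances through the autocovariances of the skipped chain. For the marginal, $\Var{\widehat{\pi}(x)} = \frac{1}{m^2}\sum_{t,t'}\Cov\left(\pred{Y_t = x}, \pred{Y_{t'} = x}\right)$ with $m = \floor{(n-1)/s}$, and since $\abs{\Cov\left(\pred{Y_1=x},\pred{Y_{1+k}=x}\right)} = \pi(x)\abs{P^{sk}(x,x) - \pi(x)} \leq \pi(x)\,d(sk)$, I obtain $\Var{\widehat{\pi}(x)} \leq \frac{\pi(x)}{m}\left(1 + 2\sum_{k\geq 1} d(sk)\right)$; the analogous (slightly more bookkeeping-heavy, because consecutive pairs overlap) computation for the pair chain $Z_t \eqdef (Y_t,Y_{t+1})$ gives $\Var{\widehat{Q}^{(s)}(x,x')} \leq \frac{Q^{(s)}(x,x')}{m}\left(3 + 2\sum_{k\geq 1} d(sk)\right)$. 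Here the standard uniform-ergodicity bound $d(t) \leq 2\exp\left(-\tfrac{\log 2}{\tmix}t\right)$ (the worst-case analogue of \eqref{eq:control-beta-mixing-uniformly-ergodic}) makes the geometric series converge: $\sum_{k\geq 1} d(sk) \leq \frac{2}{2^{s/\tmix}-1} \leq \frac{2\tmix}{s\log 2}$, so both autocorrelation factors are $O(\tmix/s)$.

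Finally I would sum over coordinates. The key point is that the factor $Q^{(s)}(x,x')$ under the square root is pulled out as $\sqrt{Q^{(s)}(x,x')}$, so $\bbE_\pi\nrm{\widehat{Q}^{(s)} - Q^{(s)}}_1 \leq \sqrt{\tfrac{\tau_Z}{m}}\sum_{x,x'}\sqrt{Q^{(s)}(x,x')} = \sqrt{\tfrac{\tau_Z}{m}}\,\nrm{Q^{(s)}}_{1/2}^{1/2}$, with $\tau_Z = 3 + 2\sum_{k\geq1} d(sk)$ (and the marginal term handled identically with $\tau_Y = 1 + 2\sum_{k\geq1} d(sk)$), where $\nrm{Q^{(s)}}_{1/2} = \calJ_\infty^{(s)}$ by definition~\eqref{definition:J-infty}. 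For the marginal term the same manipulation yields $\nrm{\pi}_{1/2}^{1/2}$, and I would close the gap by showing $\nrm{\pi}_{1/2} \leq \calJ_\infty^{(s)}$: indeed $\sum_{x,x'}\sqrt{Q^{(s)}(x,x')} = \sum_x \sqrt{\pi(x)}\sum_{x'}\sqrt{P^s(x,x')} \geq \sum_x\sqrt{\pi(x)}$, because $\sum_{x'}\sqrt{P^s(x,x')} \geq \sqrt{\sum_{x'} P^s(x,x')} = 1$. Substituting $\sum_{k\geq1}d(sk)\leq\frac{2\tmix}{s\log 2}$ and $m = \floor{(n-1)/s} \geq (n-1-s)/s$, and using $n > 2s+1$ (so that $(n-1)/(n-1-s) < 2$) to turn $\tau_Z/m$ and $\tau_Y/m$ into multiples of $(\tmix+2s)/(n-1)$, gives the stated bound.

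The main obstacle I anticipate is twofold. The conceptual part is recognizing that, unlike the general-ergodicity estimate, uniform ergodicity lets one take the entropic exponent all the way to $\calJ_\infty^{(s)}$ (the ``$p=\infty$'' endpoint of Theorem~\ref{theorem:estimation-beta-coefficients-skip-wise-without-tmix}) because the covariance series is genuinely summable with geometric rate $2^{-s/\tmix}$; this is exactly what removes the mixing/entropy trade-off present in the general case. The technical part is the careful constant accounting in the last step: collapsing the two autocorrelation factors and the floor $\floor{(n-1)/s}$ into exactly $(\tmix+2s)/(n-1)$ with a universal constant no larger than $3\sqrt{2}$ requires bounding $\sum_{k\geq 1}d(sk)$ and combining the pair and marginal contributions $\tfrac{1}{2}\sqrt{\tau_Z} + \sqrt{\tau_Y}$ without wasteful slack.
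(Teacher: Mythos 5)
Your proposal is correct in structure and reaches the stated bound, but it routes the key variance estimates differently from the paper. The paper starts from the same decomposition (empirical pair distribution plus twice the empirical marginal, each coordinate bounded in MAD by its standard deviation via Jensen — this is exactly its displayed inequality \eqref{eq:absolute-deviation-variance-bound}) and the same final summation $\sum_{x,x'}\sqrt{Q^{(s)}(x,x')}=\nrm{Q^{(s)}}_{1/2}^{1/2}$ with $\nrm{\pi}_{1/2}\le\nrm{Q^{(s)}}_{1/2}=\calJ_\infty^{(s)}$; where it differs is that it imports the variance bounds $\Var[\pi]{N_x^{(s)}}\le 4\floor{(n-1)/s}\ceil{\tmix/s}\pi(x)$ and $\Var[\pi]{N_{xx'}^{(s)}}\le 4\floor{(n-1)/s}(\ceil{\tmix/s}+1)Q^{(s)}(x,x')$ wholesale from \citet[Theorem~3.2, Proposition~3.4]{paulin2015concentration}, whereas you derive the analogous bounds from scratch by summing autocovariances of the skipped chain using $\abs{\Cov(\pred{Y_1=x},\pred{Y_{1+k}=x})}\le\pi(x)\,d(sk)$ and the geometric decay of $d$ under uniform ergodicity. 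Your route is more self-contained and makes explicit why the entropic exponent can be pushed to the $p=\infty$ endpoint (the covariance series is summable against the full weight $\pi(x)$, not just $\pi(x)^{1-1/p}$), at the cost of the overlap bookkeeping for the pair chain, which you handle correctly. One small caution on the constant: your intermediate bound $\sum_{k\ge1}d(sk)\le 2/(2^{s/\tmix}-1)\le 2\tmix/(s\log 2)$ carries a factor $2/\log 2\approx 2.885$ that, when propagated, lands you slightly above $3\sqrt{2}$ in the regime $\tmix\gg s$; replacing it with the grouping bound $\sum_{k\ge1}d(sk)\le\ceil{\tmix/s}\sum_{j\ge0}2^{-j}=2\ceil{\tmix/s}$ gives $s\,\tau_Z\le 4\tmix+7s\le 4(\tmix+2s)$ and $s\,\tau_Y\le 4(\tmix+2s)$, after which $\tfrac12\sqrt{\tau_Z/m}+\sqrt{\tau_Y/m}\le\tfrac32\sqrt{8(\tmix+2s)/(n-1)}$ recovers exactly $\Cue\le 3\sqrt{2}$.
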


\begin{theorem}[$(\eps,\delta)$-PAC Bound]
\label{theorem:estimation-beta-coefficients-uniform-ergodicity-skip-wise-pac}
Let $\eps, \delta \in (0,1), s, n \in \bbN$ with $n > 2s + 1$, and let $\widehat{\beta}(s) \colon \calX^n \to [0,1]$ be the estimator  defined in \eqref{eq:estimator-beta-s}.
There exists a universal constant $\Cue \leq 3\sqrt{2}$ such that
for any $\pi$-stationary uniformly ergodic Markov chain $X_1, X_2, \dots, X_n$ with transition operator $P \in \calW^\star(\calX)$, for
    \begin{equation*}
        n \geq 1 + \frac{4(\tmix + 2s)}{\eps^2} \max \set{ \Cue^2  \calJ_\infty^{(s)}, 576 \log \frac{2}{\delta}},
    \end{equation*}
    with probability at least $1 - \delta$, it holds that
    \begin{equation*}
        \abs{\widehat{\beta}(s) - \beta(s)} \leq \eps.
    \end{equation*}
\end{theorem}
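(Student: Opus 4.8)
The plan is to upgrade the mean-absolute-deviation bound of Theorem~\ref{theorem:estimation-beta-coefficients-uniform-ergodicity-skip-wise-amd} into a high-probability statement by pairing it with an exponential concentration inequality, organized around the split
\begin{equation*}
    \abs{\widehat{\beta}(s) - \beta(s)} \leq \underbrace{\abs{\widehat{\beta}(s) - \bbE_\pi \widehat{\beta}(s)}}_{\text{fluctuation}} + \underbrace{\abs{\bbE_\pi \widehat{\beta}(s) - \beta(s)}}_{\text{bias}}.
\end{equation*}
The bias term I would force below $\eps/2$ deterministically, and the fluctuation term below $\eps/2$ on an event of probability at least $1-\delta$; the two resulting constraints on $n$ then merge into the maximum appearing in the statement, and on the good event the triangle inequality yields $\abs{\widehat{\beta}(s)-\beta(s)}\leq\eps$.

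For the bias, I would apply Jensen's inequality, $\abs{\bbE_\pi \widehat{\beta}(s) - \beta(s)} \leq \bbE_\pi \abs{\widehat{\beta}(s) - \beta(s)}$, and bound the right-hand side directly by Theorem~\ref{theorem:estimation-beta-coefficients-uniform-ergodicity-skip-wise-amd}. Demanding $\Cue\sqrt{(\tmix + 2s)\calJ_\infty^{(s)}/(n-1)} \leq \eps/2$ and isolating $n$ produces exactly the first branch $n \geq 1 + \frac{4(\tmix+2s)}{\eps^2}\Cue^2 \calJ_\infty^{(s)}$ of the maximum, so no new idea is needed here.

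For the fluctuation, the structural observation is that $\widehat{\beta}(s)$ depends only on the $m+1$ active observations $X_1, X_{1+s}, \dots, X_{1+sm}$ with $m \eqdef \floor{(n-1)/s}$, which themselves form a stationary Markov chain governed by $P^s$, uniformly ergodic with mixing time $\ceil{\tmix/s}$. I would first establish a bounded-difference estimate: viewing $\widehat{\beta}(s)$ as half the total-variation distance between the empirical joint law $N^{(s)}_{xx'}/m$ and the product of empirical marginals $N^{(s)}_{x}/m$, changing one active observation perturbs at most four joint counts and two marginal counts, so (using $\nrm{\mu'\otimes\mu' - \mu\otimes\mu}_1 \leq 2\nrm{\mu'-\mu}_1$ for the product term) $\widehat{\beta}(s)$ moves by at most $4/m$. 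With these bounded differences I would invoke a McDiarmid-type (bounded-differences) concentration inequality for uniformly ergodic Markov chains \citep{paulin2015concentration} on the skipped chain, whose variance proxy scales as $\ceil{\tmix/s}\sum_j c_j^2 \lesssim \ceil{\tmix/s}\,(m+1)/m^2$, to obtain $\PR{\abs{\widehat{\beta}(s) - \bbE_\pi\widehat{\beta}(s)} > \eps/2} \leq \delta$.

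The main obstacle, and the only delicate part, is the constant bookkeeping required to land precisely on $n \geq 1 + \frac{4(\tmix+2s)}{\eps^2}\cdot 576\log\frac{2}{\delta}$. Crucially, one must pass through the \emph{skipped-chain} mixing time rather than $\tmix$ itself, converting it via $\ceil{\tmix/s}\cdot s \leq \tmix + s$: this is what turns the naive multiplicative factor $\tmix\cdot s$ into the favorable additive factor $\tmix + 2s$. The floor-and-boundary discrepancies that arise (the $(n-1-s)$ denominator and the $\tmix + s$ numerator) are then absorbed into the slightly looser $n-1$ and $\tmix + 2s$, which is legitimate because $576\log(2/\delta)/\eps^2 \geq 1$ for $\eps,\delta \in (0,1)$. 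Tracking the remaining numerical constants through the bounded-difference estimate and the concentration inequality yields the factor $576$, completing the proof.
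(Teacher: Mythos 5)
Your proposal is correct and follows essentially the same route as the paper: the bias is controlled via the mean-absolute-deviation bound of Theorem~\ref{theorem:estimation-beta-coefficients-uniform-ergodicity-skip-wise-amd}, and the fluctuation via a bounded-differences estimate for $\widehat{\beta}(s)$ on the $s$-skipped chain combined with \citet[Corollary~2.10]{paulin2015concentration}, with the conversion $s\ceil{\tmix/s}\leq \tmix+s$ playing the same role. The only (immaterial) differences are that the paper centers $\abs{\widehat{\beta}(s)-\beta(s)}$ at its own expectation rather than centering $\widehat{\beta}(s)$, and its bounded-differences constant is $8/\floor{(n-1)/s}$ versus your tighter $4/\floor{(n-1)/s}$, which only makes your constants smaller than the stated ones.
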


\begin{remark}
\label{remark:techniques-are-different}
    Specializing Theorem~\ref{theorem:estimation-beta-coefficients-skip-wise-without-tmix} to obtain a PAC bound in the uniformly ergodic setting would instead yield an inferior PAC upper bound in
    $$n \geq \frac{C}{\eps^2} \max\set{ \left(s + p \tmix  \right)\calJ_{p}^{(s)} ,  (s +  \tmix ) \log \frac{1}{\delta} }, C > 0.$$
\end{remark}

\subsubsection{Comparison with prior results}
\label{section:estimation-comparison-prior-results}

In \citet[Theorem~4]{mcdonald2015estimating}, the authors establish the nearly parametric convergence rate for estimating $\beta$-mixing coefficients of continuously valued Markov processes
\begin{equation*}
    \bbE{\abs{\widehat{\beta}(s) - \beta(s)}} \leq \bigO \left( \sqrt{\frac{W_0(n)}{n}} \right),
\end{equation*}
where $W_0$ is the Lambert function \citep{corless1996lambert}.
They also posit that their findings may be considered conservative when applied to discrete-valued processes. Theorem~\ref{theorem:estimation-beta-coefficients-skip-wise-without-tmix} validates their remark by recovering the superior parametric rate $\bigO(\sqrt{1/n})$ under convergence of $\calB_p^{(s)}$ and $\calJ_p^{(s)}$.
In the general ergodic setting, our method shares similarities with 
\citet{khaleghi2023inferring}, as both employ Rio's covariance inequality  \citet[Corollary~1.1]{rio1999theorie}.
However, compared to both \citet{khaleghi2023inferring} and \citet[Theorem~4]{mcdonald2015estimating},
 we distinguish ourselves by focusing on the discrete space setting and deriving explicit rates in terms of fine-grained properties of the transition operator.
We also note that \citet{grunewalder2024estimating} address the estimation of $\beta$-mixing coefficients in the geometrically ergodic setting, while we consider a broader class of $\beta$-mixing processes, including sub-exponentially and algebraically $\beta$-mixing ones. 
Finally, under uniform ergodicity, instead of \citeauthor{rio1999theorie}'s covariance inequality, we rely on \citet{paulin2015concentration} which yields a stronger bound on the MAD and sharper PAC bounds (refer to Remark~\ref{remark:techniques-are-different}).

\subsection{Estimation of the average-mixing time}
\label{section:estimation-average-mixing-time}
We now show how the sequence of estimators for the $\beta$-mixing coefficients, discussed in the previous section, translates into an estimator for the average-mixing time.
Recall that the average distance to stationarity $\beta$ does not generally enjoy sub-multiplicativity. In turn it $\atmix(\xi)$ cannot generally be bounded by $\atmix(1/4)$ and a polynomial of $\log (1/\xi)$. Consequently, we formalize desired guarantees on our estimator  differently from \citet{hsu2019mixing, wolfer2024improved} for the relaxation time, where the error is defined multiplicatively with respect to the estimator $| \widehat{\trel} / \trel - 1 | \leq \eps$.
When estimating the average-mixing time, we treat the proximity $\xi$ to stationarity as a free parameter, and construct an estimator that targets a band $\xi(1 \pm \eps)$.
Specifically, we define for $\xi \in (0,1)$,
\begin{equation}
\label{eq:estimator-average-tmix}
    \widehat{\tmix}^{\sharp}(\xi) \eqdef \argmin_{s \in \bbN} \set{ \widehat{\beta}(s) \leq \xi },
\end{equation}
where $\widehat{\beta}(s)$ is the estimator for $\beta(s)$ defined in \eqref{eq:estimator-beta-s}, and our next objective is to obtain with high probability an upper bound on the trajectory length such that
\begin{equation*}
\begin{split}
\widehat{\tmix}^{\sharp}(\xi) \in \left[ \tmix^{\sharp}(\xi(1 + \eps)), \tmix^{\sharp}(\xi(1 - \eps)) \right].
\end{split}
\end{equation*}
As illustrated on Figure~\ref{fig:average-mixing-time-estimation-window}, the interval above becomes thinner as $\eps$ decays to zero.
Our next lemma explains how to convert a sufficient trajectory length for high-probability control of the error of the sequence of estimators for the $\beta$-mixing coefficients into a sufficient trajectory length in order to estimate $\atmix(\Theta(\xi))$.

\begin{figure}
    \centering

\tikzset{every picture/.style={line width=0.75pt}} %

\begin{tikzpicture}[x=0.68pt,y=0.68pt,yscale=-1,xscale=1]

\draw  (82,221) -- (566,221)(90,9) -- (90,233) (559,216) -- (566,221) -- (559,226) (85,16) -- (90,9) -- (95,16)  ;
\draw [color=wred  ,draw opacity=1 ][line width=1.5]    (90.5,141.5) -- (562.5,141.5) ;
\draw [color=wred  ,draw opacity=1 ][line width=0.75]    (89.5,162) -- (562.5,162) ;
\draw [color=wred  ,draw opacity=1 ][line width=0.75]    (90,122) -- (518,122) -- (562.5,122) ;
\draw  [draw opacity=0][fill=wred  ,fill opacity=0.2 ] (90,122) -- (562.5,122) -- (562.5,162) -- (90,162) -- cycle ;
\draw    (90,221.27) -- (561,221) (110,217.26) -- (110,225.26)(130,217.25) -- (130,225.25)(150,217.23) -- (150,225.23)(170,217.22) -- (170,225.22)(190,217.21) -- (190,225.21)(210,217.2) -- (210,225.2)(230,217.19) -- (230,225.19)(250,217.18) -- (250,225.18)(270,217.17) -- (270,225.17)(290,217.15) -- (290,225.15)(310,217.14) -- (310,225.14)(330,217.13) -- (330,225.13)(350,217.12) -- (350,225.12)(370,217.11) -- (370,225.11)(390,217.1) -- (390,225.1)(410,217.09) -- (410,225.09)(430,217.07) -- (430,225.07)(450,217.06) -- (450,225.06)(470,217.05) -- (470,225.05)(490,217.04) -- (490,225.04)(510,217.03) -- (510,225.03)(530,217.02) -- (530,225.02)(550,217.01) -- (550,225.01) ;
\draw   (108,18.5) .. controls (108,17.12) and (109.12,16) .. (110.5,16) .. controls (111.88,16) and (113,17.12) .. (113,18.5) .. controls (113,19.88) and (111.88,21) .. (110.5,21) .. controls (109.12,21) and (108,19.88) .. (108,18.5) -- cycle ;
\draw   (127,42.5) .. controls (127,41.12) and (128.12,40) .. (129.5,40) .. controls (130.88,40) and (132,41.12) .. (132,42.5) .. controls (132,43.88) and (130.88,45) .. (129.5,45) .. controls (128.12,45) and (127,43.88) .. (127,42.5) -- cycle ;
\draw   (148,68.5) .. controls (148,67.12) and (149.12,66) .. (150.5,66) .. controls (151.88,66) and (153,67.12) .. (153,68.5) .. controls (153,69.88) and (151.88,71) .. (150.5,71) .. controls (149.12,71) and (148,69.88) .. (148,68.5) -- cycle ;
\draw   (168,89.5) .. controls (168,88.12) and (169.12,87) .. (170.5,87) .. controls (171.88,87) and (173,88.12) .. (173,89.5) .. controls (173,90.88) and (171.88,92) .. (170.5,92) .. controls (169.12,92) and (168,90.88) .. (168,89.5) -- cycle ;
\draw   (188,104.5) .. controls (188,103.12) and (189.12,102) .. (190.5,102) .. controls (191.88,102) and (193,103.12) .. (193,104.5) .. controls (193,105.88) and (191.88,107) .. (190.5,107) .. controls (189.12,107) and (188,105.88) .. (188,104.5) -- cycle ;
\draw   (208,116.5) .. controls (208,115.12) and (209.12,114) .. (210.5,114) .. controls (211.88,114) and (213,115.12) .. (213,116.5) .. controls (213,117.88) and (211.88,119) .. (210.5,119) .. controls (209.12,119) and (208,117.88) .. (208,116.5) -- cycle ;
\draw   (227,128.5) .. controls (227,127.12) and (228.12,126) .. (229.5,126) .. controls (230.88,126) and (232,127.12) .. (232,128.5) .. controls (232,129.88) and (230.88,131) .. (229.5,131) .. controls (228.12,131) and (227,129.88) .. (227,128.5) -- cycle ;
\draw   (247,137.5) .. controls (247,136.12) and (248.12,135) .. (249.5,135) .. controls (250.88,135) and (252,136.12) .. (252,137.5) .. controls (252,138.88) and (250.88,140) .. (249.5,140) .. controls (248.12,140) and (247,138.88) .. (247,137.5) -- cycle ;
\draw   (267,146.5) .. controls (267,145.12) and (268.12,144) .. (269.5,144) .. controls (270.88,144) and (272,145.12) .. (272,146.5) .. controls (272,147.88) and (270.88,149) .. (269.5,149) .. controls (268.12,149) and (267,147.88) .. (267,146.5) -- cycle ;
\draw   (288,153.5) .. controls (288,152.12) and (289.12,151) .. (290.5,151) .. controls (291.88,151) and (293,152.12) .. (293,153.5) .. controls (293,154.88) and (291.88,156) .. (290.5,156) .. controls (289.12,156) and (288,154.88) .. (288,153.5) -- cycle ;
\draw   (308,158.5) .. controls (308,157.12) and (309.12,156) .. (310.5,156) .. controls (311.88,156) and (313,157.12) .. (313,158.5) .. controls (313,159.88) and (311.88,161) .. (310.5,161) .. controls (309.12,161) and (308,159.88) .. (308,158.5) -- cycle ;
\draw   (328,165.5) .. controls (328,164.12) and (329.12,163) .. (330.5,163) .. controls (331.88,163) and (333,164.12) .. (333,165.5) .. controls (333,166.88) and (331.88,168) .. (330.5,168) .. controls (329.12,168) and (328,166.88) .. (328,165.5) -- cycle ;
\draw   (348,172.5) .. controls (348,171.12) and (349.12,170) .. (350.5,170) .. controls (351.88,170) and (353,171.12) .. (353,172.5) .. controls (353,173.88) and (351.88,175) .. (350.5,175) .. controls (349.12,175) and (348,173.88) .. (348,172.5) -- cycle ;
\draw   (368,178.5) .. controls (368,177.12) and (369.12,176) .. (370.5,176) .. controls (371.88,176) and (373,177.12) .. (373,178.5) .. controls (373,179.88) and (371.88,181) .. (370.5,181) .. controls (369.12,181) and (368,179.88) .. (368,178.5) -- cycle ;
\draw   (388,182.5) .. controls (388,181.12) and (389.12,180) .. (390.5,180) .. controls (391.88,180) and (393,181.12) .. (393,182.5) .. controls (393,183.88) and (391.88,185) .. (390.5,185) .. controls (389.12,185) and (388,183.88) .. (388,182.5) -- cycle ;
\draw   (408,185.5) .. controls (408,184.12) and (409.12,183) .. (410.5,183) .. controls (411.88,183) and (413,184.12) .. (413,185.5) .. controls (413,186.88) and (411.88,188) .. (410.5,188) .. controls (409.12,188) and (408,186.88) .. (408,185.5) -- cycle ;
\draw   (428,188.5) .. controls (428,187.12) and (429.12,186) .. (430.5,186) .. controls (431.88,186) and (433,187.12) .. (433,188.5) .. controls (433,189.88) and (431.88,191) .. (430.5,191) .. controls (429.12,191) and (428,189.88) .. (428,188.5) -- cycle ;
\draw   (448,192.5) .. controls (448,191.12) and (449.12,190) .. (450.5,190) .. controls (451.88,190) and (453,191.12) .. (453,192.5) .. controls (453,193.88) and (451.88,195) .. (450.5,195) .. controls (449.12,195) and (448,193.88) .. (448,192.5) -- cycle ;
\draw [color={rgb, 255:red, 155; green, 155; blue, 155 }  ,draw opacity=1 ] [dash pattern={on 0.84pt off 2.51pt}]  (229.5,128.5) -- (230,221) ;
\draw [color={rgb, 255:red, 155; green, 155; blue, 155 }  ,draw opacity=1 ] [dash pattern={on 0.84pt off 2.51pt}]  (269.5,146.5) -- (270,221) ;
\draw [color={rgb, 255:red, 155; green, 155; blue, 155 }  ,draw opacity=1 ] [dash pattern={on 0.84pt off 2.51pt}]  (330.5,165.5) -- (330.5,221.13) ;
\draw   (468,195.5) .. controls (468,194.12) and (469.12,193) .. (470.5,193) .. controls (471.88,193) and (473,194.12) .. (473,195.5) .. controls (473,196.88) and (471.88,198) .. (470.5,198) .. controls (469.12,198) and (468,196.88) .. (468,195.5) -- cycle ;
\draw   (488,198.5) .. controls (488,197.12) and (489.12,196) .. (490.5,196) .. controls (491.88,196) and (493,197.12) .. (493,198.5) .. controls (493,199.88) and (491.88,201) .. (490.5,201) .. controls (489.12,201) and (488,199.88) .. (488,198.5) -- cycle ;
\draw   (508,200.5) .. controls (508,199.12) and (509.12,198) .. (510.5,198) .. controls (511.88,198) and (513,199.12) .. (513,200.5) .. controls (513,201.88) and (511.88,203) .. (510.5,203) .. controls (509.12,203) and (508,201.88) .. (508,200.5) -- cycle ;
\draw   (528,202.5) .. controls (528,201.12) and (529.12,200) .. (530.5,200) .. controls (531.88,200) and (533,201.12) .. (533,202.5) .. controls (533,203.88) and (531.88,205) .. (530.5,205) .. controls (529.12,205) and (528,203.88) .. (528,202.5) -- cycle ;
\draw   (548,203.5) .. controls (548,202.12) and (549.12,201) .. (550.5,201) .. controls (551.88,201) and (553,202.12) .. (553,203.5) .. controls (553,204.88) and (551.88,206) .. (550.5,206) .. controls (549.12,206) and (548,204.88) .. (548,203.5) -- cycle ;

\draw (73,133) node [anchor=north west][inner sep=0.75pt]   [align=left] {$\xi$};
\draw (30,113) node [anchor=north west][inner sep=0.75pt]   [align=left] {$(1+\eps)\xi$};
\draw (30,153) node [anchor=north west][inner sep=0.75pt]   [align=left] {$(1-\eps)\xi$};
\draw (56,17) node [anchor=north west][inner sep=0.75pt]   [align=left] {$\beta(s)$};
\draw (547,227) node [anchor=north west][inner sep=0.75pt]   [align=left] {$s$};
\draw (270,225) node [anchor=north west][inner sep=0.75pt] [rotate=-30,xslant=-0.07]  [align=left] {$\atmix(\xi)$};
\draw (330,225) node [anchor=north west][inner sep=0.75pt] [rotate=-30,xslant=-0.07]  [align=left] {$\atmix(\xi(1 - \eps))$};
\draw (230,225) node [anchor=north west][inner sep=0.75pt] [rotate=-30,xslant=-0.07]  [align=left] {$\atmix(\xi(1 + \eps))$};

\end{tikzpicture}
    
    \caption{Average-mixing time $\xi(1 \pm \eps)$ band.}
    \label{fig:average-mixing-time-estimation-window}
\end{figure}

\begin{lemma}
    \label{lemma:convert-beta-pac-to-atmix-bound}
    Let $\delta, \xi, \eps \in (0,1)$ and $n \in \bbN$.
Suppose that for any $s\in \bbN$, there exists an estimator $\widehat{\beta}(s)$ for $\beta(s)$ such that for any $\pi$-stationary ergodic Markov chain $X_1, \dots, X_n$, when $n \geq n_0(P, s, \eps, \delta)$ it holds with probability at least $1 - \delta$ that
\begin{equation*}
    \abs{\widehat{\beta}(s) - \beta(s)} < \eps.
\end{equation*} 
Let $\widehat{\tmix}^{\sharp}(\xi) \colon \calX^n \to \bbN$ be the estimator for $\atmix(\xi)$ defined in 
\eqref{eq:estimator-average-tmix}.
Then for
\begin{equation*}
    n \geq \max_{1\leq s \le \atmix(\xi(1 - \eps))} n_0\left(P, s, \xi \eps, \frac{\delta}{2 \atmix(\xi)}\right),
\end{equation*}
with probability at least $1 - \delta$, it holds that 
\begin{equation*}
\begin{split}
\widehat{\tmix}^{\sharp}(\xi) \in \left[ \tmix^{\sharp}(\xi(1 + \eps)), \tmix^{\sharp}(\xi(1 - \eps)) \right].
\end{split}
\end{equation*}
\end{lemma}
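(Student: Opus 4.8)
The plan is to pass from the per-skip guarantee on $\widehat{\beta}$ to a guarantee on $\widehat{\tmix}^\sharp$ purely through the defining property of $\atmix$ as a first-hitting time, combined with a carefully sized union bound. Write $s_- \eqdef \atmix(\xi(1+\eps))$ and $s_+ \eqdef \atmix(\xi(1-\eps))$; since the sublevel sets $\set{t \colon \beta(t) \leq \xi'}$ are nested in $\xi'$, the map $\xi' \mapsto \atmix(\xi')$ is non-increasing, so $s_- \leq \atmix(\xi) \leq s_+$. I would define the good event
\begin{equation*}
    G \eqdef \set{ \abs{\widehat{\beta}(s) - \beta(s)} < \xi\eps \text{ for all } s \in \calS },
\end{equation*}
where $\calS \eqdef \set{1, \dots, s_- - 1} \cup \set{s_+}$ is the minimal set of skip lengths the argument actually inspects, and then show that $G$ forces $\widehat{\tmix}^\sharp(\xi) \in [s_-, s_+]$.

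For the left endpoint I would use that for every $s < s_- = \atmix(\xi(1+\eps))$ the first-hitting definition gives $\beta(s) > \xi(1+\eps)$, so on $G$ we get $\widehat{\beta}(s) > \beta(s) - \xi\eps > \xi(1+\eps) - \xi\eps = \xi$. Since $\widehat{\beta}(s) > \xi$ for every such $s$, the first skip at which $\widehat{\beta}$ drops to $\xi$ cannot occur before $s_-$, i.e. $\widehat{\tmix}^\sharp(\xi) \geq s_-$. For the right endpoint, at $s = s_+$ the definition gives $\beta(s_+) \leq \xi(1-\eps)$, so on $G$ we get $\widehat{\beta}(s_+) < \beta(s_+) + \xi\eps \leq \xi(1-\eps) + \xi\eps = \xi$; thus $s_+$ is an admissible index in \eqref{eq:estimator-average-tmix} (which in particular shows the defining set there is non-empty), and $\widehat{\tmix}^\sharp(\xi) \leq s_+$. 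Combining the two bounds yields the claimed containment on $G$, and notably this step requires neither monotonicity of $\beta$ in $t$ nor any sub-multiplicativity.

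It remains to control $\bbP(G^c)$. Applying the hypothesis at each fixed $s$ with tolerance $\xi\eps$ and confidence $\delta/(2\atmix(\xi))$, and using that $n$ exceeds $n_0(P, s, \xi\eps, \delta/(2\atmix(\xi)))$ for every $s \leq s_+$ (exactly what the stated $\max$ over $1 \leq s \leq \atmix(\xi(1-\eps))$ guarantees), a union bound over $\calS$ gives, with $\abs{\calS} = s_- = \atmix(\xi(1+\eps)) \leq \atmix(\xi)$,
\begin{equation*}
    \bbP(G^c) \leq \abs{\calS}\,\frac{\delta}{2\atmix(\xi)} = \frac{\atmix(\xi(1+\eps))}{2\atmix(\xi)}\,\delta \leq \frac{\delta}{2} \leq \delta.
\end{equation*}

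The step I expect to require the most care is precisely the choice of $\calS$. Because $\beta$ is not sub-multiplicative (as flagged in the Remark following the definition of $\atmix$), the interval $\set{1, \dots, \atmix(\xi(1-\eps))}$ may contain far more than $\atmix(\xi)$ skip lengths, so a naive union bound over the whole interval would not close against the per-event budget $\delta/(2\atmix(\xi))$. The observation that the left endpoint only needs control on $\set{1, \dots, \atmix(\xi(1+\eps))-1}$ while the right endpoint needs control at the single index $\atmix(\xi(1-\eps))$ is what keeps the number of inspected skips at $O(\atmix(\xi))$; the large index $\atmix(\xi(1-\eps))$ still enters the sample-size requirement through the $\max$, since $n_0$ may grow with $s$, but it contributes only one term to the union bound.
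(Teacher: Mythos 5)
Your proposal is correct and follows essentially the same route as the paper: the paper likewise splits the failure event into an overshoot part (controlled via the single index $\atmix(\xi(1-\eps))$, where $\beta \leq \xi(1-\eps)$ forces $\widehat{\beta} \leq \xi$ up to error $\xi\eps$) and an undershoot part (a union bound over $s < \atmix(\xi(1+\eps))$, where $\beta(s) > \xi(1+\eps)$ forces $\widehat{\beta}(s) > \xi$), with the same accounting that the number of inspected indices is at most $\atmix(\xi(1+\eps)) \leq \atmix(\xi)$. Your uniform per-index budget $\delta/(2\atmix(\xi))$ even matches the stated hypothesis slightly more literally than the paper's use of $\delta/2$ for the overshoot index, which implicitly relies on monotonicity of $n_0$ in the confidence parameter.
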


\begin{theorem}
\label{theorem:estimation-average-mixing-time}
Let $\delta, \xi, \eps \in (0,1)$,
    $n \in \bbN$, and let $\widehat{\tmix}^{\sharp}(\xi) \colon \calX^n \to \bbN$ be the estimator for $\atmix(\xi)$ defined in 
\eqref{eq:estimator-average-tmix}. There exists a universal constant $C \in \bbR_+$ such that the following holds.
Let $X_1, X_2, \dots, X_n$ be a $\pi$-stationary uniformly ergodic Markov with transition operator $P \in \calW^\star(\calX)$.
For any $p \in \bbR_+ \cup \{\infty\}$, we write
\begin{equation*}
    \calJ_{p, \xi} \eqdef \sup \set{ \calJ_p^{(s)} \colon s \in \bbN, s \leq \atmix(\xi) }.
\end{equation*}
\begin{enumerate}
\item When $P$ is ergodic, and 
\begin{equation*}
    n \geq 1 + \Ce^2 \atmix\left(\xi(\eps, \delta) \right) \frac{1}{\xi^4 \eps^4} \min_{p \geq 1} \set{ \calB_p \calJ_{p, \xi(1 - \eps)}  } \log \frac{4 \atmix(\xi)}{\delta}, \qquad \xi(\eps,\delta) = \frac{ \eps^2 \delta}{\Ce \log(1/\delta)},
\end{equation*}
\item or when $P$ is uniformly ergodic, and
    \begin{equation*}
        n \geq 1 + 4 \tmix\frac{1 + \ceil*{\log_2\frac{1}{\xi(1 - \eps)}}}{\xi^2 \eps^2} \max \set{ \Cue^2 \calJ_{\infty, \xi(1 - \eps)}, 576 \log \frac{4 \tmix \ceil*{\log_2 1/\xi}}{\delta}},
    \end{equation*}
\end{enumerate}
then with probability at least $1 - \delta$, it holds that
\begin{equation*}
\begin{split}
\widehat{\tmix}^{\sharp}(\xi) \in \left[ \tmix^{\sharp}(\xi(1 + \eps)), \tmix^{\sharp}(\xi(1 - \eps)) \right].
\end{split}
\end{equation*}

\end{theorem}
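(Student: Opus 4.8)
The plan is to deduce both parts from the master reduction of Lemma~\ref{lemma:convert-beta-pac-to-atmix-bound}, which converts any per-coefficient PAC guarantee for $\widehat{\beta}(s)$ into the desired band containment for $\widehat{\tmix}^\sharp(\xi)$. Concretely, the lemma tells us it suffices to have, simultaneously for every $1 \leq s \leq \atmix(\xi(1-\eps))$, a high-probability estimate $\abs{\widehat{\beta}(s) - \beta(s)} < \xi\eps$ at confidence $1 - \delta/(2\atmix(\xi))$; on the event that all these estimates hold, the lemma places $\widehat{\tmix}^\sharp(\xi)$ inside $[\tmix^\sharp(\xi(1+\eps)), \tmix^\sharp(\xi(1-\eps))]$. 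Thus the whole task reduces to taking the sample-size function $n_0(P, s, \cdot, \cdot)$ furnished by our earlier per-coefficient theorems, evaluating it at accuracy $\xi\eps$ and confidence $\delta/(2\atmix(\xi))$, and maximizing it over $1 \leq s \leq \atmix(\xi(1-\eps))$.

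For the uniformly ergodic case (part 2), I would feed Theorem~\ref{theorem:estimation-beta-coefficients-uniform-ergodicity-skip-wise-pac} into the reduction, so that $n_0(P,s,\xi\eps,\delta/(2\atmix(\xi)))$ equals $1 + \frac{4(\tmix + 2s)}{\xi^2\eps^2}\max\set{\Cue^2\calJ_\infty^{(s)}, 576\log\frac{4\atmix(\xi)}{\delta}}$. Maximizing over the admissible range of $s$ then requires three bounds: $\calJ_\infty^{(s)} \leq \calJ_{\infty,\xi(1-\eps)}$ by definition of the supremum; $s \leq \atmix(\xi(1-\eps))$; and a control of the average-mixing times $\atmix(\xi(1-\eps))$ and $\atmix(\xi)$ themselves. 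The last is exactly where uniform ergodicity is used: the exponential decay \eqref{eq:control-beta-mixing-uniformly-ergodic}, $\beta(s) \leq 2\exp(-(\log 2/\tmix)s)$, yields $\atmix(\xi') \leq \tmix(1 + \ceil*{\log_2(1/\xi')})$ for any proximity $\xi'$. Substituting these, $\tmix + 2s$ is controlled by $4\tmix(1 + \ceil*{\log_2(1/\xi(1-\eps))})$ and the self-referential $\atmix(\xi)$ inside the logarithm is replaced by a multiple of $\tmix\ceil*{\log_2(1/\xi)}$, which collects, up to absorbing constants in the logarithm, to the stated trajectory length.

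For the general ergodic case (part 1), I would instead feed Theorem~\ref{theorem:estimation-beta-coefficients-general-ergodicity-skip-wise}, whose bound holds for every fixed $p \geq 1$; this is the source of the final $\min_{p\geq 1}$, obtained by selecting the most favourable $p$ once everything else is in place. After substituting accuracy $\xi\eps$ and confidence $\delta/(2\atmix(\xi))$, I would bound the two competing terms in the maximum separately: for the entropic/mixing product I use the monotonicities $\calB_p^{(s)} \leq \calB_p$ and $\calJ_p^{(s)} \leq \calJ_{p,\xi(1-\eps)}$ together with $s \leq \atmix(\xi(1-\eps))$, producing a factor $\atmix(\xi(1-\eps))\calB_p\calJ_{p,\xi(1-\eps)}/(\xi^4\eps^4)$; for the mixing-budget term I retain $\atmix$ of the (very small) substituted proximity. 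Both average-mixing times are then dominated by the single quantity $\atmix(\xi(\eps,\delta))$ with $\xi(\eps,\delta) = \eps^2\delta/(\Ce\log(1/\delta))$, since $\atmix$ is nonincreasing in its proximity argument and $\xi(\eps,\delta)$ is no larger than either $\xi(1-\eps)$ or the substituted budget proximity; the cost of this consolidation, and of bounding the nested logarithm $\log(2\atmix(\xi)/\delta) \asymp \log(4\atmix(\xi)/\delta)$, is absorbed into the universal constant, which is why $\Ce$ appears squared.

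The main obstacle is not any single inequality but the careful bookkeeping created by the two coupled substitutions that the reduction forces: replacing $\delta$ by $\delta/(2\atmix(\xi))$ makes the average-mixing time reappear both inside a logarithm and inside the proximity argument of a further $\atmix(\cdot)$, so one must break this self-reference cleanly. In part 2 uniform ergodicity severs it through the explicit $\tmix$-bound on $\atmix$; in part 1 one severs it by the monotonicity-consolidation under a single dominating $\atmix(\xi(\eps,\delta))$, at the price of loosening constants. Verifying that these manipulations only inflate the universal constants, rather than the polynomial dependence on $1/\xi$ and $1/\eps$, is the delicate part; the remaining algebra is routine.
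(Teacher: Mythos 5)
Your proposal follows exactly the route the paper takes: plug the per-coefficient PAC bounds of Theorem~\ref{theorem:estimation-beta-coefficients-general-ergodicity-skip-wise} (general ergodic case) and Theorem~\ref{theorem:estimation-beta-coefficients-uniform-ergodicity-skip-wise-pac} (uniformly ergodic case) into the reduction of Lemma~\ref{lemma:convert-beta-pac-to-atmix-bound} with accuracy $\xi\eps$ and confidence $\delta/(2\atmix(\xi))$, then resolve the self-reference via $\atmix(\xi) \leq \tmix \ceil*{\log_2(1/\xi)}$ under uniform ergodicity and via monotonicity of $\atmix$ and the definitions of $\calB_p$ and $\calJ_{p,\xi}$ in the general case. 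The paper states only this two-line recipe; your write-up correctly supplies the bookkeeping it leaves implicit.
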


\begin{proof}
    The theorem follows from plugging 
Theorem~\ref{theorem:estimation-beta-coefficients-general-ergodicity-skip-wise} and Theorem~\ref{theorem:estimation-beta-coefficients-uniform-ergodicity-skip-wise-pac} into
Lemma~\ref{lemma:convert-beta-pac-to-atmix-bound}.
The theorem in the uniformly ergodic case follows from $$\atmix(\xi) \leq \tmix(\xi) \leq \tmix \ceil*{\log_2 \frac{1}{\xi}}.
$$
\end{proof}

\begin{corollary}[Confidence interval for $\atmix(\xi)$]
As a consequence of Theorem~\ref{theorem:estimation-average-mixing-time}, we can construct a confidence interval for $\atmix(\xi)$. Denoting $n_0(\xi, \delta)$ the sample complexity in Theorem~\ref{theorem:estimation-average-mixing-time}, when $$n \geq \max \set{ n_0\left( \frac{\xi}{1 - \eps}, \delta/2 \right), n_0\left( \frac{\xi}{1 + \eps}, \delta/2 \right) },$$
it holds with probability $1 - \delta$ that
\begin{equation*}
  \atmix(\xi) \in \left[ \widehat{\tmix}^{\sharp}\left(\frac{\xi}{1 + \eps}\right), \widehat{\tmix}^{\sharp}\left(\frac{\xi}{1 - \eps}\right) \right].
\end{equation*}
\end{corollary}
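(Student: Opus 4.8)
The plan is to \emph{invert} the two-sided confidence band supplied by Theorem~\ref{theorem:estimation-average-mixing-time} by evaluating the plug-in estimator $\widehat{\tmix}^{\sharp}$ at two judiciously shifted proximities. Set $\xi_- \eqdef \xi/(1+\eps)$ and $\xi_+ \eqdef \xi/(1-\eps)$, and record the two algebraic identities $\xi_-(1+\eps) = \xi$ and $\xi_+(1-\eps) = \xi$; these are the entire crux of the argument, since each lets the \emph{true} quantity $\atmix(\xi) = \tmix^{\sharp}(\xi)$ reappear as one endpoint of a band centred at a shifted proximity.

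First I would apply Theorem~\ref{theorem:estimation-average-mixing-time} at proximity $\xi_-$ with confidence $\delta/2$. Because $n \geq n_0(\xi_-, \delta/2)$, with probability at least $1 - \delta/2$ the estimator obeys $\widehat{\tmix}^{\sharp}(\xi_-) \in [\tmix^{\sharp}(\xi_-(1+\eps)), \tmix^{\sharp}(\xi_-(1-\eps))]$; by the first identity its left endpoint equals $\tmix^{\sharp}(\xi) = \atmix(\xi)$, so on this event $\widehat{\tmix}^{\sharp}(\xi_-) \geq \atmix(\xi)$. Symmetrically, applying the theorem at proximity $\xi_+$ with confidence $\delta/2$, which is legitimate since $n \geq n_0(\xi_+, \delta/2)$, gives with probability at least $1 - \delta/2$ that $\widehat{\tmix}^{\sharp}(\xi_+) \in [\tmix^{\sharp}(\xi_+(1+\eps)), \tmix^{\sharp}(\xi_+(1-\eps))]$; by the second identity its right endpoint is $\atmix(\xi)$, whence $\widehat{\tmix}^{\sharp}(\xi_+) \leq \atmix(\xi)$ on this event.

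A union bound over the two failure events, each of probability at most $\delta/2$, shows that both inequalities hold simultaneously with probability at least $1 - \delta$, producing the sandwich $\widehat{\tmix}^{\sharp}(\xi_+) \leq \atmix(\xi) \leq \widehat{\tmix}^{\sharp}(\xi_-)$. Since $\xi \mapsto \tmix^{\sharp}(\xi)$ and its plug-in estimator are nonincreasing in the proximity (a stricter threshold can only be met later), the two evaluations are automatically ordered, so the sandwich exhibits $\atmix(\xi)$ as lying in the interval with endpoints $\widehat{\tmix}^{\sharp}(\xi/(1-\eps))$ and $\widehat{\tmix}^{\sharp}(\xi/(1+\eps))$, which is the asserted confidence interval; the sample size $\max\{n_0(\xi_+, \delta/2), n_0(\xi_-, \delta/2)\}$ dominates both invocations by definition of the maximum. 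I do not anticipate any genuine obstacle: the statement is entirely inherited from Theorem~\ref{theorem:estimation-average-mixing-time}, and the only care needed is clerical, namely pairing each shifted proximity with the correct band endpoint through $\xi_\pm(1\pm\eps)=\xi$, splitting the confidence budget as $\delta/2 + \delta/2$, and keeping the monotonicity direction of $\tmix^{\sharp}$ straight so that the two estimator values form a properly oriented interval.
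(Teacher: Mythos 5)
Your proposal is correct and is exactly the band-inversion argument the paper intends (the corollary is stated without an explicit proof, as an immediate consequence of Theorem~\ref{theorem:estimation-average-mixing-time}): two invocations at the shifted proximities $\xi/(1\pm\eps)$ with budget $\delta/2$ each, the identities $\xi_\pm(1\mp\eps)=\xi$, and a union bound. Your careful remark about monotonicity even surfaces that, since $\widehat{\tmix}^{\sharp}$ is nonincreasing in the proximity, the sandwich you obtain is $\widehat{\tmix}^{\sharp}(\xi/(1-\eps)) \leq \atmix(\xi) \leq \widehat{\tmix}^{\sharp}(\xi/(1+\eps))$, so the endpoints in the corollary as printed appear in reversed order --- a cosmetic slip in the paper's statement, not a gap in your argument.
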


In the non uniformly ergodic setting, our method introduces a trade-off parameter $p$ to obtain an upper bound of order
\begin{equation*}
      \widetilde \bigO \left( \frac{\atmix}{\xi^4} \inf_{p \geq 1} \set{\calB_p \calJ_{p, \xi} } \right).
\end{equation*}
This enables the use of versatile bounds on the entropic term, which we explore in Section~\ref{section:implications-various-state-space-scales}.
Currently, there are no related work to which we can compare the general results obtained in Section~\ref{section:estimation-average-mixing-time}. We defer a comparison to the state of the art in the finite space setting to Section~\ref{section:comparison-relaxation-time-mixing-time-estimation-complexity}.

\section{Implications at various scales of state space}
\label{section:implications-various-state-space-scales}
Recall (refer to Theorem~\ref{theorem:estimation-beta-coefficients-skip-wise-without-tmix}, Theorem~\ref{theorem:estimation-beta-coefficients-general-ergodicity-skip-wise}) that we require a bound on the entropic term 
\begin{equation*}
    \calJ_p^{(s)} = \nrm{Q^{(s)}}^{1 - 1/p}_{(1 - 1/p)/2},
\end{equation*}
where for $s \in \bbN$ and for $x,x' \in \calX$, we wrote $Q^{(s)}(x,x') = \pi(x) P^s(x,x')$, and for $q \in \bbR_+$,
\begin{equation*}
    \nrm{Q^{(s)}}^{q}_{q} \eqdef \sum_{(x,x')\in \calX^2} Q^{(s)}(x,x')^q,
\end{equation*}
in order to recover an estimation rate for the individual $\beta$-mixing coefficients,
whilst upper bounds on the sample complexity for estimating the average-mixing time (Theorem~\ref{theorem:estimation-average-mixing-time}) require
 a control which holds uniformly over all skipping rates up to the order of the average-mixing time,
\begin{equation}
\label{eq:entropic-term}
    \calJ_{p, \xi} = \sup \set{ \calJ^{(s)}_p \colon s \in \bbN, s \leq \atmix (\xi)}.
\end{equation}
Bounds generally depend on the tuning parameter $p$, which can then be appropriately selected---for instance in Theorem~\ref{theorem:estimation-average-mixing-time}---in order to optimize the upper bound on the sample complexity.
Unfortunately, $\calJ_{p, \xi}$ is generally not easily amenable to analysis; hence, our focus lies in establishing bounds for its evaluation.
It is not necessarily the case that $\calJ_p^{(s)}$ is monotonous in $s$.
As illustrated in Figure~\ref{fig:entropy-transient-regime}, $\calJ_p^{(s)}$ can exhibit a transient regime.
In this case, under ergodicity of $P$, the pair-wise limit
\begin{equation*}
    \lim_{s \to \infty} Q^{(s)}(x,x') = \pi(x)\pi(x'), \qquad \forall (x,x') \in \calX^2,
\end{equation*}
hints at a heuristic lower bound in $\nrm{\pi}_{(1 - 1/p)/2}^{2(1 - 1/p)}$ on $\calJ_{p, \xi}$.
Using the stationary regime as a baseline, it seems natural to examine whether the entropic term undergoes relative explosive growth in the transient regime.
For instance, for some multiplicative factor $K \geq 1$, an upper bound
\begin{equation}
\label{eq:optimistic-bound-entropic-term}
    \calJ_p^{(s)} \leq K \nrm{\pi}^{2(1 - 1/p)}_{(1 - 1/p)/2},
\end{equation}
with a controlled $K$
readily streamlines the sample complexities appearing in Section~\ref{section:estimation} in terms of properties of the stationary distribution.
Let us illustrate the approach by assuming the existence of a uniformly dominating distribution over all conditional distributions. Formally, let us suppose that there exists a universal constant $C \in \bbR_+$ and a probability measure $\nu \in \calP(\calX)$, such that for all $x' \in \calX$,
$$ \sup_{x \in \calX} P(x,x')\leq C \nu(x') .$$
It follows that
\begin{equation*}
\begin{split}
 Q^{(s)}(x,x') &\leq \pi(x) \sum_{x'' \in \calX} C \nu(x')  P^{s-1}(x,x'') =  C\pi(x) \nu(x'),
\end{split}
\end{equation*}  
and thus
\begin{equation*}
 \sup_{s \in \bbN} \calJ_{p}^{(s)} \leq C^{1 - 1/p} \nrm{\nu}_{(1 - 1/p)/2}^{1 - 1/p} \nrm{\pi}_{(1 - 1/p)/2}^{1 - 1/p}.
\end{equation*}
In the rest of the section, we proceed to discuss bounds on the entropic quantities $\calJ_p^{(s)}$ and $\calJ_{p,\xi}$ in terms of ergodic properties of the chain, structural properties of the connection graph of the Markov chain, or the alphabet size.
We conclude with a discussion on the binary state space, where the dominant complexity stems from the mixing bottlenecks, revealing that our approach holds implications at every scale.

\begin{figure}
    \centering
    
\tikzset{every picture/.style={line width=0.75pt}} %

\begin{tikzpicture}[x=0.50pt,y=0.50pt,yscale=-1,xscale=1]

\draw  (50,240.67) -- (596,240.67)(59.82,22.67) -- (59.82,250) (589,235.67) -- (596,240.67) -- (589,245.67) (54.82,29.67) -- (59.82,22.67) -- (64.82,29.67) (79.82,235.67) -- (79.82,245.67)(99.82,235.67) -- (99.82,245.67)(119.82,235.67) -- (119.82,245.67)(139.82,235.67) -- (139.82,245.67)(159.82,235.67) -- (159.82,245.67)(179.82,235.67) -- (179.82,245.67)(199.82,235.67) -- (199.82,245.67)(219.82,235.67) -- (219.82,245.67)(239.82,235.67) -- (239.82,245.67)(259.82,235.67) -- (259.82,245.67)(279.82,235.67) -- (279.82,245.67)(299.82,235.67) -- (299.82,245.67)(319.82,235.67) -- (319.82,245.67)(339.82,235.67) -- (339.82,245.67)(359.82,235.67) -- (359.82,245.67)(379.82,235.67) -- (379.82,245.67)(399.82,235.67) -- (399.82,245.67)(419.82,235.67) -- (419.82,245.67)(439.82,235.67) -- (439.82,245.67)(459.82,235.67) -- (459.82,245.67)(479.82,235.67) -- (479.82,245.67)(499.82,235.67) -- (499.82,245.67)(519.82,235.67) -- (519.82,245.67)(539.82,235.67) -- (539.82,245.67)(559.82,235.67) -- (559.82,245.67)(579.82,235.67) -- (579.82,245.67)(54.82,220.67) -- (64.82,220.67)(54.82,200.67) -- (64.82,200.67)(54.82,180.67) -- (64.82,180.67)(54.82,160.67) -- (64.82,160.67)(54.82,140.67) -- (64.82,140.67)(54.82,120.67) -- (64.82,120.67)(54.82,100.67) -- (64.82,100.67)(54.82,80.67) -- (64.82,80.67)(54.82,60.67) -- (64.82,60.67)(54.82,40.67) -- (64.82,40.67) ;
\draw   ;
\draw    (60,240.67) .. controls (100,210.67) and (110,80.67) .. (160,80.67) .. controls (210,80.67) and (236,161.67) .. (590,160.67) ;
\draw [color=wred  ,draw opacity=1 ] [dash pattern={on 0.84pt off 2.51pt}]  (59,81) -- (161,80.67) ;
\draw [color=wred  ,draw opacity=1 ] [dash pattern={on 0.84pt off 2.51pt}]  (160,241) -- (160,80.67) ;
\draw [color={rgb, 255:red, 155; green, 155; blue, 155 }  ,draw opacity=1 ] [dash pattern={on 0.84pt off 2.51pt}]  (590,160.67) -- (59,160.33) ;
\draw [color={rgb, 255:red, 155; green, 155; blue, 155 }  ,draw opacity=1 ] [dash pattern={on 0.84pt off 2.51pt}]  (240,240.33) -- (240,113.33) ;

\draw (10,65) node [anchor=north west][inner sep=0.75pt] [color=wred  ,draw opacity=1 ]  [align=left] {$\calJ_{p, \xi}$};
\draw (10,150) node [anchor=north west][inner sep=0.75pt]   [align=left] {$\calJ_{\infty}$};
\draw (563,250) node [anchor=north west][inner sep=0.75pt]   [align=left] {$s$};
\draw (225,245) node [anchor=north west][inner sep=0.75pt]   [align=left] {$\atmix(\xi)$};
\draw (10,15) node [anchor=north west][inner sep=0.75pt]   [align=left] {$\calJ_p^{(s)}$};

\end{tikzpicture}
    \caption{Transient regime for the entropic term $\calJ_p^{(s)}$.}
    \label{fig:entropy-transient-regime}
\end{figure}

\subsection{Ergodic properties}
\label{subsec:implications-ergodic}
In this subsection, we derive a control of $\calJ_p^{(s)} $ based on a decomposition and natural ergodic properties of the chain that we introduce.
Let us first  decompose $\sqrt{\calJ_p^{(s)}}$ in terms of a stationary quantity and some measure of distance to stationarity. Recall that for 
$a,b > 0, q \in (0,1)$, it holds that $(a + b)^q \leq  a^q + b^q$,
thus
\begin{align*}
 \sqrt{\calJ_p^{(s)}} &\leq \beta_{(1 - 1/p)/2}(s) + \nrm{\pi}_{(1 - 1/p)/2}^{1 - 1/p},
\end{align*}
where for $q \in (0,1)$, $\beta_q(s) \geq \beta(s)$ is a generalization of the $\beta$-mixing coefficient defined as
\begin{equation*}
\begin{split}
\beta_q(s) &\eqdef \frac{1}{2} \sum_{x, x' \in \calX}  \abs{\pi(x)P^s(x,x') - \pi(x)\pi(x')}^{q}.\\
\end{split}
\end{equation*}
Moving forward, for a function $V \colon \calX \to [1,\infty)$ we introduce the notion of the $V_{q}$-norm of a signed measure as follows,
\begin{equation*}
    \nrm{\mu}_{q,V}^q \eqdef \sum_{x \in \calX} \abs{\mu(x)}^q V(x),
\end{equation*}
and we say that $P$ is $V_q$-geometrically ergodic when there exists $\rho_{q, V} \in (0,1)$ such that for all $s \in \bbN$,
\begin{equation}
\label{definition:v-geometric-ergodicity}
    \sup_{x \in \calX } \frac{\nrm{e_x P^s - \pi}^q_{q,V}}{V(x)} \leq \rho_{q, V}^s.
\end{equation}
Under this assumption the coefficients $\beta(s)$ and $\beta_{q}(s)$ can be bounded as follows,
\begin{equation*}
\begin{split}
\beta(s) \leq \beta_q(s) &= \frac{1}{2} \sum_{x,x' \in \calX}  \abs{\pi(x)P^s(x,x') - \pi(x)\pi(x')}^q \\
&\leq \frac{1}{2} \sum_{x \in \calX} \pi(x)^q \sum_{x' \in \calX} \abs{P^s(x,x') - \pi(x')}^q V(x') \\
&\leq \frac{1}{2} \sum_{x \in \calX} \pi(x)^q V(x) \rho_{q, V}^s = \frac{1}{2} \rho_{q, V}^s \nrm{\pi}_{q, V}^q, \\
\end{split}
\end{equation*}
thus the average-mixing time satisfies,
\begin{equation*}
    \atmix(\xi) \leq \cfrac{\log  \nrm{\pi}_{q,V}^q + \log \frac{1}{2\xi} }{\log \rho_{q,V}^{-1}},
\end{equation*}
As a result, writing for simplicity, $\Pi_{r, V} \eqdef \nrm{\pi}_{r, V}^{r}$ for $r \in (0,1)$, for any $q \in (0, (1-1/p)/2]$ we can bound the entropic term as follows,
\begin{equation*}
\begin{split}
    \calJ_p^{(s)} \leq \left(  \rho_{q, V}^s \Pi_{q, V} + \Pi_{(1 - 1/p)/2, 1}^2 \right)^2,
\end{split}
\end{equation*}
leading to the following theorem.

\begin{theorem}[Under $V_q$-geometrical ergodicity]
\label{theorem:entropy-term-under-vq-geometric-ergodicity}
    Let $p \geq 1$.
    If there exists $q \in (0, (1 - 1/p)/2]$ such $P$ is $V_{q}$-geometrically ergodic---refer to \eqref{definition:v-geometric-ergodicity}---for any $s \in \bbN$, it holds that
    \begin{equation*}
\begin{split}
    \calJ_p^{(s)} \leq \left(  \rho_{q, V}^s \Pi_{q, V} + \Pi_{(1 - 1/p)/2, 1}^2 \right)^2.
\end{split}
\end{equation*}
\end{theorem}
Observe from the above that, as soon as
\begin{equation*}
\begin{split}
    s  &\geq \cfrac{\log \Pi_{q, V} - 2 \log \Pi_{(1 - 1/p)/2, 1}}{\log \rho_{q, V}^{-1}}, \\
\end{split}
\end{equation*}
we obtain a control of the entropic term in terms of a purely stationary quantity 
\begin{equation*}
\begin{split}
    \calJ_p^{(s)} \leq 4 \Pi_{(1 - 1/p)/2, 1}^4,
\end{split}
\end{equation*}
giving us insight into the exit time of the transient regime illustrated in Figure~\ref{fig:entropy-transient-regime}.
Finally, a similar proof allows us to analyze the entropic term under the following condition, which is much weaker than $V$-geometrical ergodicity.
\begin{theorem}
    \label{theorem:entropy-term-under-pointwise-geometric-ergodicity}
    Suppose that there exist $\rho \in (0,1)$ and $V \colon \calX \to [1, \infty)$ such that for any $s \in \bbR$,
     \begin{equation*}
         \sup_{x,x'} \abs{P^s(x,x') - \pi(x')} \frac{V(x')}{V(x)} \leq \rho^s.
     \end{equation*}
     Then for all $p \geq 1$ and for all
     $s \in \bbN$, it holds that
    \begin{equation*}
\begin{split}
    \calJ_p^{(s)} \leq \left(  \rho^{s(1 - 1/p)/2} \nrm{ \pi \odot V }_{(1 - 1/p)/2}^{(1 - 1/p)/2}\nrm{1/V}_{(1 - 1/p)/2}^{(1 - 1/p)/2} + \Pi_{(1 - 1/p)/2, 1}^2 \right)^2.
\end{split}
\end{equation*}
\end{theorem}

\begin{remark}
    Obvious summability conditions on $\nrm{1/V}_{(1 - 1/p)/2}$ and $\nrm{\pi \odot V}_{(1 - 1/p)/2}$ are necessary for the bound in the above theorem to be non-vacuous. Additionally, under the condition of Theorem~\ref{theorem:entropy-term-under-pointwise-geometric-ergodicity} a bound on the $\beta$-mixing coefficient can be immediately obtained as follows,
    \begin{equation*}
        \beta(s) \leq \frac{\rho^s}{2} \nrm{\pi \odot V}_1 \nrm{ 1/V }_1.
    \end{equation*}
\end{remark}

\subsection{Infinite graphs with controlled growth}
\label{section:controlled-growth-graphs}
Natural structural assumptions on 
the underlying connection graph of a Markov chain also enable us to control the entropic term.
Recall that an ergodic transition operator $P \in \calW(\calX)$ induces a fully connected digraph $(\calX, \calE)$, where
$$\calE \eqdef \set{ (x,x') \in \calX^2 \colon P(x,x') > 0}$$
is the set of directed edges.
The growth of a graph \citep{trofimov1985graphs, campbell2022graphs} is defined as a function $g \colon \bbN \to \bbN \cup \set{\infty}$, where for $s \in \bbN$,
$$g(s) \eqdef \sup_{\calS \subset \calX, \rho \left(  (\calX, \calE)|_{\calS} \right) \leq s} \abs{\calS},$$
where $\rho \colon \calX \times \calX^2 \to \bbN \cup \set{\infty}$ denotes the diameter of a directed graph, which is the maximum eccentricity of any vertex in the graph with the convention that a disconnected graph has infinite diameter, $(\calX, \calE)|_{\calS}$ is the sub-graph of $(\calX, \calE)$ induced by the vertices in $\calS$, and the supremum is taken over all $\calS$ such that $(\calX, \calE)|_{\calS}$ has diameter at most $s$.

\begin{condition}[Connection graph with polynomial growth]
\label{condition:connection-graph-with-polynomial-growth}
    A graph is said to have at most polynomial growth when there exists $G \in \bbR_+$ and $q \geq 1$ such that for any $s \in \bbN$, it holds that $g(s) \leq G s^q$. Then $q$ and $G$ are respectively called the degree and factor of growth.
\end{condition}

\begin{example}[Birth and death chains]
    It is immediate from its definition that the connection graph associated with the Chebyshev type random walk developed in Example~\ref{example:birth-and-death} has bounded growth with $G = 2$ and $q = 1$.
\end{example}

\begin{lemma}
    \label{lemma:from-polynomial-growth}
Let $p \in \bbR_+ \cup \{+\infty\}, q \in [1, \infty), \xi \in (0, 1)$ and let $P \in \calW(\calX)$ be a $\pi$-stationary transition operator. If the connection graph of $P$ has polynomial growth of degree $q$ with factor $G$, it holds that
\begin{equation*}
    \calJ_{p, \xi} \leq 
    G^{1 + 1/p} \atmix(\xi)^{q(1 + 1/p)} \nrm{\pi}^{1 - 1/p}_{(1 - 1/p)/2}.
\end{equation*}    
\end{lemma}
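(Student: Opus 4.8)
The plan is to expand the entropic term, decouple the two coordinates of $Q^{(s)}$, bound the inner sum by the number of states reachable in $s$ steps through a power-mean inequality, and finally translate that count into the graph growth. Set $r \eqdef (1 - 1/p)/2$, so that $r \in [0, 1/2]$, and unfold the definition to write $\calJ_p^{(s)} = \nrm{Q^{(s)}}_{r}^{2r} = \left( \sum_{x,x' \in \calX} Q^{(s)}(x,x')^{r} \right)^2$. Since $Q^{(s)}(x,x') = \pi(x) P^s(x,x')$, this factors coordinate-wise as $\calJ_p^{(s)} = \left( \sum_{x \in \calX} \pi(x)^{r} \sum_{x' \in \calX} P^s(x,x')^{r} \right)^2$.

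Next, for each fixed $x$ the row $P^s(x, \cdot)$ is a probability vector supported on $A_x^{(s)} \eqdef \set{ x' \in \calX \colon P^s(x,x') > 0 }$. Because $t \mapsto t^{r}$ is concave on $\bbR_+$ for $r \in (0,1)$, Hölder's inequality applied with $\sum_{x'} P^s(x,x') = 1$ (equivalently the power-mean inequality, or Cauchy--Schwarz in the boundary case $p = \infty$, $r = 1/2$) yields $\sum_{x'} P^s(x,x')^{r} \leq \abs{A_x^{(s)}}^{1 - r}$. Substituting this uniformly in $x$ gives $\calJ_p^{(s)} \leq \left( \sup_x \abs{A_x^{(s)}} \right)^{2(1-r)} \left( \sum_x \pi(x)^{r} \right)^2 = \left( \sup_x \abs{A_x^{(s)}} \right)^{1 + 1/p} \nrm{\pi}_{(1 - 1/p)/2}^{1 - 1/p}$, where I have used $2(1 - r) = 1 + 1/p$ and $\left(\sum_x \pi(x)^{r}\right)^2 = \nrm{\pi}_{r}^{2r}$ with $2r = 1 - 1/p$.

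The crux, and the step I expect to be the main obstacle, is to bound $\abs{A_x^{(s)}}$ by the growth $g(s)$. Observe that $A_x^{(s)}$ is contained in the out-ball of radius $s$ around $x$ in the connection digraph $(\calX, \calE)$, namely the set of states reachable from $x$ in at most $s$ steps. The difficulty is that $g$ is defined through the \emph{diameter} of induced subgraphs, whereas reachability is a one-sided notion, so one must exhibit a set of controlled induced diameter that contains $A_x^{(s)}$. This is transparent for chains with symmetric support (such as the birth--death example, for which $G = 2$, $q = 1$), where the shortest path between two points of the out-ball can be routed through $x$ while staying inside the ball; this gives an induced diameter of order $s$, whence $\abs{A_x^{(s)}} \leq g(O(s)) \leq G \cdot O(s)^q$. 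Making this correspondence precise---and checking that the resulting constant is absorbed into the factor $G$ of Condition~\ref{condition:connection-graph-with-polynomial-growth}---is the only genuinely geometric part of the argument; the rest is algebraic.

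Finally, assembling the pieces with $\abs{A_x^{(s)}} \leq g(s) \leq G s^q$, I obtain $\calJ_p^{(s)} \leq g(s)^{1 + 1/p} \nrm{\pi}_{(1 - 1/p)/2}^{1 - 1/p} \leq G^{1 + 1/p}\, s^{q(1 + 1/p)}\, \nrm{\pi}_{(1 - 1/p)/2}^{1 - 1/p}$. Since the right-hand side is increasing in $s$, taking the supremum over $s \in \bbN$ with $s \leq \atmix(\xi)$ in the definition $\calJ_{p,\xi} = \sup\set{ \calJ_p^{(s)} \colon s \leq \atmix(\xi) }$ evaluates the bound at $s = \atmix(\xi)$ and produces exactly $\calJ_{p, \xi} \leq G^{1 + 1/p}\, \atmix(\xi)^{q(1 + 1/p)}\, \nrm{\pi}_{(1 - 1/p)/2}^{1 - 1/p}$, as claimed.
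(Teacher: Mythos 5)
Your proposal is essentially the paper's proof: the same expansion of $\sqrt{\calJ_p^{(s)}}$ into the factored double sum, the same bound $\sum_{x'}P^s(x,x')^{(1-1/p)/2}\leq\abs{\calS_x^{(s)}}^{(1+1/p)/2}$ (phrased in the paper as the uniform distribution maximizing the $\ell_{(1-1/p)/2}$ quasi-norm over a fixed support), the same substitution $\abs{\calS_x^{(s)}}\leq Gs^q$, and the same monotone evaluation at $s=\atmix(\xi)$. The one place you diverge is that you flag the passage from the out-reachable set $\calS_x^{(s)}$ to the induced-diameter-based growth function $g$ as a genuine geometric step requiring an argument (routing paths through $x$, possibly costing a constant in the radius); the paper's proof simply asserts this inequality ``follows from the polynomial growth assumption,'' so your caution identifies the weakest link of the published argument rather than a defect of your own.
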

\begin{proof}

For $x \in \calX$, let us denote  $\calS_x^{(s)} \subset \calX$ the set of states accessible from $x$ in $s$ steps. We have that
\begin{equation*}
\begin{split}
    \sqrt{\calJ_p^{(s)}} &= \sum_{x \in \calX} \pi(x)^{(1 - 1/p)/2} \sum_{x' \in \calX}P^s(x,x')^{(1 - 1/p)/2} \leq \sum_{x \in \calX} \pi(x)^{(1 - 1/p)/2} \sum_{x' \in \calS_x^{(s)}} \left(\frac{1}{\calS_x^{(s)}} \right)^{(1 - 1/p)/2} \\
    &= \sum_{x \in \calX} \pi(x)^{(1 - 1/p)/2}  {\calS_x^{(s)} }^{(1+1/p)/2} \leq \sum_{x \in \calX} \pi(x)^{(1 - 1/p)/2} (s^q G)^{(1 + 1/p)/2}, \\
\end{split}
\end{equation*}
where the first inequality stems from the uniform distribution maximizing the  $\ell_{(1 - 1/p)/2}$ quasi-norm, and the second inequality follows from the polynomial growth assumption.
\end{proof}

\begin{example}[Graph with polynomial growth and stationary distribution with polynomially decaying tail]
We let $\calX = \bbN$, and suppose that there exists $C \in \bbR_+$, and $\alpha \in (1, \infty)$ such that $\pi(x) \leq \frac{C}{x^{1 + \alpha}}$ for any $x \in \calX$. It holds that
\begin{equation*}
    \sum_{x \in \calX} \pi(x)^{(1 - 1/p)/2} \leq C^{(1 - 1/p)/2} \zeta((1 + \alpha)(1 - 1/p)/2),
\end{equation*}
where $\zeta$ is the Riemann Zeta function.

\begin{enumerate}
    \item In the uniformly ergodic setting, Lemma~\ref{lemma:from-polynomial-growth} specializes to
    \begin{equation*}
        \calJ_{\infty, \xi} \leq G C \atmix(\xi)^q \zeta((1+ \alpha)/2)^2,
    \end{equation*}
    and treating $C, G, \alpha$ as constants, an upper bound on the sample complexity for a $(1/4, \delta)$-PAC bound on $\atmix(\xi)$ is given by
    \begin{equation*}
        n \geq \widetilde \bigO \left( \tmix^{q + 1} /\xi^2 \right).
    \end{equation*}
    \item In the non-uniformly setting, we may instead optimize the following product
    \begin{equation*}
        \min_{p \geq 1} \set{ \calB_p G^{1 + 1/p} \atmix(\xi)^{q(1 + 1/p)} \nrm{\pi}^{1 - 1/p}_{(1 - 1/p)/2} }.
    \end{equation*}
\end{enumerate}

\end{example}

\subsection{Finite space}
When the state space $\calX$ is finite and $P$ is supported on $(\calX, \calE)$, the graph-related methods discussed in the previous section remain applicable. 
Alternatively, a cruder---graph agnostic---approach yields the following bound,
\begin{equation*}
\label{eq:from-moderately-large-finite-space}
     \sqrt{\calJ_{\infty}^{(s)}} \leq 
    \sum_{(x,x') \in \calE} \left(\frac{1}{\abs{\calX}^2}\right)^{1/2} = \abs{\calX}.
\end{equation*}
Note that \eqref{eq:from-moderately-large-finite-space} matches the heuristic lower bound $\nrm{\pi}_{(1 - 1/p)/2}^{2(1 - 1/p)}$ in the worst-case where $P$ is bistochastic.

\begin{corollary}[of Theorem~\ref{theorem:estimation-average-mixing-time} for $\abs{\calX} < \infty$]
\label{corollary:average-mixing-time-estimation-finite-space}
Let $\calX$ be a finite space.
Let $\eps \in (0,1/4)$, and $\delta, \xi \in (0,1)$.
    Let $n \in \bbN$, and let $\widehat{\tmix}^{\sharp}(\xi) \colon \calX^n \to \bbN$ be the estimator for $\atmix(\xi)$ defined in 
\eqref{eq:estimator-average-tmix}.
For any ergodic stationary Markov chain $X_1, X_2, \dots, X_n$ with transition operator $P \in \calW(\calX)$.
When
\begin{equation*}
        n \geq 1 + 4 \tmix\frac{1 + \ceil*{\log_2\frac{1}{\xi(1 - \eps)}}}{\xi^2 \eps^2} \max \set{ \Cue^2 \abs{\calX}^2, 576 \log \frac{4 \tmix \ceil*{\log_2 1/\xi}}{\delta}},
    \end{equation*}
with probability at least $1 - \delta$, it holds that
\begin{equation*}
\begin{split}
\widehat{\tmix}^{\sharp}(\xi) \in \left[ \tmix^{\sharp}(\xi(1 + \eps)), \tmix^{\sharp}(\xi(1 - \eps)) \right].
\end{split}
\end{equation*}
\end{corollary}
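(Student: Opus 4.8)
The plan is to derive the statement as a direct specialization of Theorem~\ref{theorem:estimation-average-mixing-time}.2, exploiting that on a finite state space every ergodic chain is automatically uniformly ergodic. First I would record this last fact: an irreducible, aperiodic $P \in \calW(\calX)$ on a finite $\calX$ is positive-recurrent, and the convergence theorem for finite chains \citep{levin2009markov} furnishes a geometric decay $d(t) \leq M \rho^t$ with $M = \max_{x \in \calX} M(x) < \infty$ valid uniformly over starting states. Hence $P \in \calW^\star(\calX)$, the mixing time $\tmix$ is finite, and the average-mixing times $\tmix^\sharp(\xi(1 \pm \eps))$ are well-defined, so the hypotheses of Theorem~\ref{theorem:estimation-average-mixing-time}.2 are in force (note also $\eps \in (0,1/4) \subset (0,1)$, so the admissible range is respected).

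Next I would bound the entropic term $\calJ_{\infty, \xi(1 - \eps)}$ appearing inside the $\max$ of the theorem's threshold. The graph-agnostic estimate established just before the corollary gives $\sqrt{\calJ_\infty^{(s)}} \leq \abs{\calX}$, and therefore $\calJ_\infty^{(s)} \leq \abs{\calX}^2$, for every $s \in \bbN$. The crucial feature is that this control is \emph{uniform} in the skipping rate $s$: it does not depend on $s$ at all. Consequently, taking the supremum over $s \leq \atmix(\xi(1 - \eps))$ in the definition \eqref{eq:entropic-term} yields $\calJ_{\infty, \xi(1 - \eps)} \leq \abs{\calX}^2$.

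Finally I would substitute this bound into the sample-complexity threshold of Theorem~\ref{theorem:estimation-average-mixing-time}.2. There the entropic term enters only through the factor $\max\set{\Cue^2 \calJ_{\infty, \xi(1 - \eps)},\, 576 \log \tfrac{4 \tmix \ceil*{\log_2 1/\xi}}{\delta}}$, which is nondecreasing in $\calJ_{\infty, \xi(1 - \eps)}$; hence the full threshold is nondecreasing in $\calJ_{\infty, \xi(1 - \eps)}$ as well. Replacing $\calJ_{\infty, \xi(1 - \eps)}$ by its upper bound $\abs{\calX}^2$ can only enlarge the threshold, so the lower bound on $n$ assumed in the corollary implies the lower bound on $n$ required by the theorem. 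Invoking Theorem~\ref{theorem:estimation-average-mixing-time}.2 then delivers $\widehat{\tmix}^\sharp(\xi) \in [\tmix^\sharp(\xi(1 + \eps)), \tmix^\sharp(\xi(1 - \eps))]$ with probability at least $1 - \delta$.

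Because the corollary is a clean specialization, I do not expect a genuine obstacle; the only two points warranting a word of care are the passage from ergodicity to uniform ergodicity on a finite space, and the uniform-in-$s$ nature of the estimate $\calJ_\infty^{(s)} \leq \abs{\calX}^2$, which is precisely what licenses bounding the supremum $\calJ_{\infty, \xi(1 - \eps)}$ rather than a single entropic coefficient.
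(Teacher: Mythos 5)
Your proposal is correct and follows essentially the same route as the paper: the corollary is obtained by plugging the graph-agnostic bound $\sqrt{\calJ_\infty^{(s)}} \leq \abs{\calX}$ (hence $\calJ_{\infty,\xi(1-\eps)} \leq \abs{\calX}^2$, uniformly in $s$) into the threshold of Theorem~\ref{theorem:estimation-average-mixing-time}.2, with the observation that ergodicity on a finite space already yields uniform ergodicity. The two points you flag for care are exactly the ones the paper relies on implicitly.
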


\subsubsection*{Comparison with related work}
\label{section:comparison-relaxation-time-mixing-time-estimation-complexity}

Since the criteria for convergence are different, the problems of estimating the relaxation or worst-case mixing time of a chain to multiplicative accuracy and the problem at hand are not strictly speaking comparable, but it is instructive to inspect the sample complexities.
The problem of estimating the relaxation time of a Markov chain to multiplicative error has sample complexity of the order of $\widetilde\Theta(\tmix/\pi_\star)$ \citep{hsu2019mixing, wolfer2024improved}, while a known upper bound on the complexity of estimating the mixing time is of the order of $\widetilde\bigO(\abs{\calX}\tmix/\pi_\star)$ \citep{wolfer2024empirical}.
Crucially, the above-mentioned problem all suffer a dependency on the minimum stationary probability $\pi_\star$. When $\pi_\star \propto \exp(-\abs{\calX})$, the problem becomes intractable.
Corollary~\ref{corollary:average-mixing-time-estimation-finite-space} yields the superior upper bound
$\widetilde \bigO (\tmix \abs{\calX}^2/\xi^2)$, agnostic of $\pi_\star$ and enjoying an at most linear dependency on the number of parameters of the transition operator.

\subsection{The two-point space}
In this section, we analyze the case where $\abs{\calX} = 2$, illustrating the possibly stark difference between the worst-case mixing time and the average-mixing time.
For $p , q \in (0,1)$, we let
\begin{equation*}
P_{p, q} = \begin{pmatrix}
 1 - p  & p \\
  q & 1 - q
\end{pmatrix}.    
\end{equation*}
Quantities such as $d$ or $d^\sharp$ subscripted with $p,q$ will be understood as pertaining to $P_{p,q}$.
The matrix $P_{p,q}$ is irreducible, and its stationary distribution is readily given by
\begin{equation*}
    \pi_{p,q} \trn = \frac{1}{p+q} \begin{pmatrix} q \\ p \end{pmatrix}.
\end{equation*}
It follows from a direct computation (see e.g. the proof of \citet[Lemma~8.1]{wolfer2024empirical}) that
\begin{equation*}
\begin{split}
P_{p, q}^t
&= 1 \trn \pi_{p,q} + \frac{(1 - p - q)^t}{p + q} \begin{pmatrix} p & -p \\
-q  & q \end{pmatrix}, \\
\end{split}
\end{equation*}
and as a result,
\begin{equation*}
\begin{split}
    d_{p,q}^{\sharp}(t) &= 2 \frac{pq}{(p+q)^2} \abs{1 - p - q}^t, \qquad  d_{p,q}(t) =  \frac{p \lor q}{p+q} \abs{1 - p - q}^t.
\end{split}
\end{equation*}
When $p = q$, the stationary distribution is uniform, $d_{p,q}^{\sharp}(t) = d_{p,q}(t)$, and
\begin{equation*}
    \tmix(\xi) = \tmix^\sharp(\xi).
\end{equation*}
However, for $\eta \eqdef p/q < 1$, the ratio of the distances is a function of the parameter $\eta$,
\begin{equation*}
    \frac{d_{p,q}(t)}{d^\sharp_{p,q}(t)} = \frac{1 + \eta}{2 \eta} \eqdef r_{\eta}(t),
\end{equation*}
whence the following lemma holds.

\begin{lemma}
    \label{lemma:arbitrary-gap-between-average-and-worst-case-mixing-time}
    There exists a family of Markov chains $$\set{P_{p,q} \colon (p, q) \in (0,1)^2} \in \calW^\star(\set{0,1})$$ such that for any $\xi \in (0,1)$ and any $M > 0$, there exist $p,q \in (0,1)$ such that
\begin{equation*}
    \tmix(\xi)  > M \tmix^{\sharp}(\xi).
\end{equation*}
\end{lemma}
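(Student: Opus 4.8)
The plan is to exploit the fact that both distances have already been computed explicitly and, crucially, that they are the \emph{same} geometric sequence scaled by two different constants. Writing $a \eqdef \abs{1-p-q}$, the excerpt gives $d_{p,q}^{\sharp}(t) = c^{\sharp} a^t$ and $d_{p,q}(t) = c\, a^t$, where $c^{\sharp} = 2pq/(p+q)^2$ and $c = (p\lor q)/(p+q)$ depend on $p,q$ only through $\eta = p/q$, and the ratio $c/c^{\sharp} = r_\eta = (1+\eta)/(2\eta)$ is independent of $t$ and diverges as $\eta \to 0$. This gives two essentially independent dials: the ratio $\eta$, which sets the two prefactors, and the overall scale of $p+q$, which sets the decay base $a$ while leaving $c,c^{\sharp}$ untouched. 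The idea is to choose $\eta$ small so that the \emph{average} prefactor $c^{\sharp}$ drops below $\xi$ while the \emph{worst-case} prefactor $c$ stays above $\xi$, and then to push $a\to 1$ so that the worst-case mixing time diverges while the average-mixing time remains pinned at $1$.

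Concretely, I would set $p = \eta q$ with $\eta \in (0,1)$, so that $p \lor q = q$, $c^{\sharp} = 2\eta/(1+\eta)^2$ and $c = 1/(1+\eta)$. Since for $p,q \in (0,1)$ the chain $P_{p,q}$ is finite, irreducible, and aperiodic (positive diagonal), it is uniformly ergodic, placing the whole family in $\calW^\star(\set{0,1})$ and guaranteeing $a = \abs{1-p-q} < 1$. As $\eta \to 0^+$ we have $c^{\sharp} \to 0$ and $c \to 1$, so (using $\xi < 1$) I can fix $\eta$ small enough that $c^{\sharp} < \xi$ and simultaneously $c > \xi$. With this $\eta$ held fixed, for \emph{every} admissible $q$ one has $d_{p,q}^{\sharp}(1) = c^{\sharp} a \leq c^{\sharp} < \xi$, and since $d_{p,q}^{\sharp}$ is decreasing this forces $\tmix^{\sharp}(\xi) = 1$ (taking the convention $\bbN = \set{1,2,\dots}$; if $0 \in \bbN$ the claim is only easier).

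Keeping $\eta$ fixed, I would then let $q \to 0^+$, so that $a = 1 - (1+\eta)q \to 1^-$. Because $c > \xi$, the smallest $t$ with $c\,a^t \leq \xi$ satisfies $\tmix(\xi) \geq \ln(c/\xi)/\ln(1/a)$, whose numerator is a fixed positive constant and whose denominator tends to $0^+$; hence $\tmix(\xi) \to \infty$. Choosing $q$ small enough that $\tmix(\xi) > M$ and combining with $\tmix^{\sharp}(\xi) = 1$ yields $\tmix(\xi) > M = M\,\tmix^{\sharp}(\xi)$, as required. The only delicate point is the independence of the two dials together with the integer-valued $\argmin$ bookkeeping: one must verify that driving $a \to 1$ does not dislodge $\tmix^{\sharp}(\xi)$ from $1$ (it does not, since $c^{\sharp}a \leq c^{\sharp} < \xi$ for all $a<1$) while it does force $\tmix(\xi)$ to grow without bound. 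Once the explicit formulas for $d_{p,q}$ and $d_{p,q}^{\sharp}$ are in hand this is elementary, so the substance of the argument is really the observation that the prefactor ratio $r_\eta$ and the decay rate $a$ can be tuned separately, plus care with the strictness of the inequalities defining the $\argmin$.
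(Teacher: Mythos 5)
Your proposal is correct and follows essentially the same route as the paper, which derives the same explicit formulas $d_{p,q}(t) = \frac{p\lor q}{p+q}\abs{1-p-q}^t$ and $d^{\sharp}_{p,q}(t) = \frac{2pq}{(p+q)^2}\abs{1-p-q}^t$ and the ratio $r_\eta = (1+\eta)/(2\eta)$ before asserting the lemma. You usefully make explicit the step the paper leaves implicit: a large prefactor ratio alone does not suffice, and one must also send $q \to 0$ (hence $\abs{1-p-q} \to 1$) with $\eta$ fixed so that $\tmix(\xi)$ diverges while $c^{\sharp} < \xi$ pins $\tmix^{\sharp}(\xi)$ at $1$.
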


\section{Conclusion}
\label{section:conclusion}
To assess convergence in Markov chains from a single trajectory of observations, recent literature has focused on estimating the worst-case mixing time or the spectral gaps associated with the transition operator.
These mixing parameters lead to concentration inequalities that are almost as powerful as those in the iid setting. 
However, their existence requires strong assumptions on the chain such as uniform or geometric ergodicity, and they are notoriously hard to estimate from the data, with known lower bounds inversely proportional to the minimum stationary probability $\pi_\star$.
As a result, the estimation problem is only statistically tractable for moderately small space sizes and when the tail of the stationary distribution does not decay too rapidly.
In this paper, we relaxed these necessary conditions and limitations by considering the average-mixing time instead. 
Over finite spaces, our analysis showed that the average-mixing time is easier to estimate than its worst-case counterpart, and we could obtain an upper bound on the sample complexity of the problem that is independent of the rate of decay of the stationary distribution.
Moreover, in contradistinction to worst-case mixing time estimation, our analysis extends to countable state spaces and sub-geometrically $\beta$-mixing Markov chains under natural structural assumptions. The trade-off for accessing an easier convergence estimation rate is a logarithmic degradation in the available concentration inequalities.

\appendix

\section{Extension to non-stationary Markov chains}
\label{section:extension-non-stationary}

Although our results are generally stated assuming a stationary start, it is possible to generalize them to non-stationary chains as follows.
Let $\mu \in \calP(\calX)$ absolutely continuous with respect to $\pi$.
Inspired by the proof of \citet[Proposition~3.10]{paulin2015concentration}, an application of H\"{o}lder's inequality with $q,r \geq 1$ with $1/q + 1/r = 1$ yields,
\begin{equation*}
    \bbE_{\mu} f(X_1, \dots, X_n) \leq \nrm{\mu(X)/\pi(X)}_{\pi, q} \nrm{f(X_1, \dots, X_n)}_{\pi, r},
\end{equation*}
and in particular, for any $\eps > 0$, we can bound the probability of an $\eps$-deviation assuming a non-stationary start as
\begin{equation*}
    \PR[\mu]{f(X_1, \dots, X_n) > \eps} \leq \nrm{\mu(X)/\pi(X)}_{\pi, q} \PR[\pi] {f(X_1, \dots, X_n) > \eps}^{1/r}.
\end{equation*}
Setting $q = r = 2$ recovers \citet[Proposition~3.10]{paulin2015concentration}.
The reader is invited to consult \citep[Section~3.3]{paulin2015concentration} for additional methods involving ``burning'' a prefix of the observed trajectory.

\section{Connection with \texorpdfstring{$\beta$}{・趣ｽｲ}-mixing}
\label{section:connection-beta-mixing}
 Recall that for a process $\{X_t\}_{t \in \bbZ}$ on $\calX$, for any $s \in \bbZ$, the $\beta$-mixing coefficient \citep{bradley2005basic, doukhan2012mixing} is defined by
 \begin{equation*}
     \beta(s) \eqdef \sup_{r \in \bbN} \beta \left( \sigma\left( \set{X_t \colon t \leq r} \right), \sigma\left( \set{X_t \colon t \geq r + s} \right) \right),
 \end{equation*}
 where $\sigma(\{X_t\}_t)$ is the $\sigma$-field generated by the random variables $\{X_t\}_t$, and for two $\sigma$-fields $\calA$ and $\calB$,
 \begin{equation*}
     \beta(\calA, \calB) \eqdef \sup \frac{1}{2} \sum_{i = 1}^{I} \sum_{j = 1}^{J} \abs{\bbP(A_i \cap B_j) - \bbP(A_i)\bbP(B_j)},
 \end{equation*}
where the supremum is taken over all pairs of finite partitions $\{ A_1, \dots, A_I \}$
and $\{ B_1, \dots, B_J \}$ of $\calX$ such that for any $1 \leq i \leq I$ and any $1 \leq j \leq J$, $A_i \in \calA$  and $B_j \in \calB$ \citep[(7)]{bradley2005basic}.
The random process $\{X_t\}_{t \in \bbZ}$ is called $\beta$-mixing, or absolutely regular, when $$\lim_{s \to \infty} \beta(s) = 0.$$
    In particular, a stationary countable-state Markov chain $X$ is $\beta$-mixing if and only if it is irreducible and aperiodic \citep[Theorem~3.2]{bradley2005basic}.
    
\begin{lemma}[{\citet[Proposition~1]{davydov1974mixing}}]
We let $\{X_t\}_{t \in \bbN}$ be a countable-state Markov chain with transition operator $P$.
Writing $\mu^{(0)}$ for the initial distribution, and for $t > 1$, $\mu^{(t)} = \mu^{(0)} P^t$ for the marginal distribution after time step $t$, it holds that
    \begin{equation*}
        \beta(s) = \sup_{t \in \bbN} \sum_{x \in \calX} \mu^{(t)}(x) \tv{ e_x P^s - \mu^{(s + t)} }. \\
    \end{equation*}
\end{lemma}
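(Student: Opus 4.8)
The plan is to descend from the abstract, partition-based definition of $\beta(s)$ to the explicit formula in three moves: rewrite each coefficient $\beta(\calA,\calB)$ as an expected total-variation distance, collapse the conditioning on the entire past to conditioning on a single state using the Markov property, and finally exploit the fact that two Markov path laws sharing the kernel $P$ are exactly as far apart in total variation as their initial marginals. Throughout, fix $t \in \bbN$ and write $\calA = \sigma(\set{X_u \colon u \leq t})$ and $\calB = \sigma(\set{X_u \colon u \geq t+s})$, with $X_t \sim \mu^{(t)}$, so that $\beta(s) = \sup_{t \in \bbN} \beta(\calA, \calB)$ by definition.

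First I would reduce the partition supremum to an expected total variation. Because $\calX$ is countable, $\calA = \sigma(X_0, \dots, X_t)$ is generated by the countably many atoms $\set{(X_0, \dots, X_t) = (a_0, \dots, a_t)}$, and the finite-dimensional future $\sigma$-field $\calB^{(m)} \eqdef \sigma(X_{t+s}, \dots, X_{t+s+m})$ is likewise atomic. The defining sum $\tfrac12 \sum_{i,j} \abs{\bbP(A_i \cap B_j) - \bbP(A_i)\bbP(B_j)}$ increases under refinement of the partitions toward the total variation between the joint law and the product of marginals, and refining all the way to the atoms yields the standard identity (see \citep{bradley2005basic}) $\beta(\calA, \calB^{(m)}) = \bbE\bigl[\tv{\bbP(\,\cdot \mid \calA) - \bbP}\bigr]$ restricted to $\calB^{(m)}$; here the factor $\tfrac12$ in the definition of $\beta(\calA,\calB)$ combines with the $\ell_1$ sum over future atoms to produce exactly the total-variation distance $\tv{\cdot}$.

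Next I would apply the Markov property: for any $B \in \calB$ we have $\bbP(B \mid \calA) = \bbP(B \mid X_t)$ almost surely, so the conditional law of the future given the past depends on the past only through $X_t$. Conditioned on $\set{X_t = x}$, the process $(X_{t+s}, X_{t+s+1}, \dots)$ is a Markov chain with kernel $P$ whose first coordinate $X_{t+s}$ has law $e_x P^s$, whereas unconditionally the same future is a chain with kernel $P$ started from $\mu^{(t+s)}$. The key lemma is then that two path laws sharing the kernel $P$ and differing only in their initial marginals $\nu_1, \nu_2$ satisfy $\tv{\bbP_{\nu_1} - \bbP_{\nu_2}} = \tv{\nu_1 - \nu_2}$: the lower bound follows by projecting onto the first coordinate, and the matching upper bound from the coupling that draws the initial states from a maximal coupling of $\nu_1, \nu_2$ and thereafter drives both chains with identical transitions. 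Applied here with $\nu_1 = e_x P^s$ and $\nu_2 = \mu^{(t+s)}$, this gives $\tv{\bbP(\,\cdot \mid X_t = x) - \bbP}\big|_{\calB^{(m)}} = \tv{e_x P^s - \mu^{(t+s)}}$ for every truncation level $m$. Hence $\beta(\calA, \calB^{(m)}) = \sum_{x \in \calX} \mu^{(t)}(x)\, \tv{e_x P^s - \mu^{(t+s)}}$, independently of $m$; letting $\calB^{(m)} \uparrow \calB$ and invoking the continuity of $\beta$ under increasing $\sigma$-fields transfers this common value to $\beta(\calA, \calB)$, and taking the supremum over $t \in \bbN$ yields the claim.

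The step I expect to be the main obstacle is the careful handling of the \emph{infinite} future $\sigma$-field $\calB$ and the justification that the partition supremum coincides with the expected total-variation distance. In general one must argue that finite-partition approximations converge to the full coefficient, which is a martingale-type continuity statement; the pleasant feature of the Markovian setting is that the truncated coefficients $\beta(\calA, \calB^{(m)})$ are \emph{constant} in $m$, precisely because the shared-kernel lemma shows that appending more future coordinates does not change the total-variation distance. This collapses the limiting argument to a triviality, so the genuine engine of the proof is the identity $\tv{\bbP_{\nu_1} - \bbP_{\nu_2}} = \tv{\nu_1 - \nu_2}$ together with the Markov reduction, with the measure-theoretic bookkeeping around $\calB$ reduced to citing the standard expected-TV representation.
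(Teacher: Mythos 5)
Your proposal is correct, but note that the paper itself gives no proof of this lemma at all: it is imported verbatim as Proposition~1 of \citet{davydov1974mixing}, so there is no internal argument to compare against. What you have written is a sound self-contained reconstruction along the standard lines: the expected--total-variation representation of $\beta(\calA,\calB)$ (valid here because the countable space makes $\calA$ atomic, so regular conditional probabilities are trivial), the Markov collapse $\bbP(\,\cdot \mid \calA) = \bbP(\,\cdot \mid X_t)$, and the shared-kernel coupling identity $\tv{\bbP_{\nu_1} - \bbP_{\nu_2}} = \tv{\nu_1 - \nu_2}$, whose two halves (projection onto the first coordinate for the lower bound, maximal coupling of the initial states followed by identical transitions for the upper bound) you state correctly. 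One small remark on the final limiting step: the constancy of $\beta(\calA, \calB^{(m)})$ in $m$ does not by itself transfer the value to the full future $\sigma$-field $\calB$ --- you still need the standard continuity fact $\beta(\calA, \sigma(\bigcup_m \calB^{(m)})) = \lim_m \beta(\calA, \calB^{(m)})$, which you do invoke; alternatively, you could bypass truncation entirely, since your coupling argument works verbatim on the infinite path space, giving the expected-TV identity over all of $\calB$ in one stroke. Either way the argument is complete and matches what one finds in Davydov's proposition and in the surveys you cite.
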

It follows from the above lemma that when the chain is stationary, $\mu^{(0)} = \pi = \mu^{(t)}$ for any $t \in \bbN$, and
\begin{equation*}
\begin{split}
    \beta(s) = \sum_{x \in \calX} \pi(x) \tv{ e_x P^s - \pi }, \\
\end{split}
\end{equation*}
thus the average-mixing time is essentially the ``stationary $\beta$-mixing time'' of the chain\footnote{
Similarly, the worst-case mixing time is directly connected to the notion of $\phi$-mixing \citep{davydov1974mixing}.
}. 
Note that with the convention $P^0 = I$, we obtain 
$$\beta(0) = 1 - \nrm{\pi}_2^2,$$
which depends solely on the stationary properties of the chain.

\section{Proofs}
\label{section:proofs}
In this section, we compile technical proofs that were either outlined briefly or omitted in the manuscript.
The subsequent technical tool will prove to be convenient.

\begin{lemma}
\label{lemma:approximate-transcendental-inequality-solver}
    Let $A, B \in \bbR_+$, with $A, B \geq e$.
    \begin{equation*}
       B \geq 2 A \log A \implies B \geq A \log B.
    \end{equation*}
\end{lemma}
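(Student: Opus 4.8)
The plan is to fix $A$ and read the claim as a one-variable statement about $g(B) \eqdef B - A \log B$ on the range $B \geq 2A\log A$: the desired implication is exactly $g(B) \geq 0$ there. First I would differentiate, obtaining $g'(B) = 1 - A/B$, which is nonnegative precisely when $B \geq A$. Since $A \geq e$ forces $\log A \geq 1$, the lower endpoint satisfies $2A\log A \geq 2A > A$, so $g' > 0$ throughout the region of interest and $g$ is strictly increasing there. Consequently it suffices to verify the inequality at the left endpoint $B_0 \eqdef 2A\log A$; note also that $B_0 \geq 2e > e$, so the hypothesis $B \geq e$ is automatically subsumed and never binds.

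Next I would substitute $B_0 = 2A\log A$ and simplify. The inequality $g(B_0) \geq 0$ reads $2A\log A \geq A\log(2A\log A)$; dividing through by $A > 0$ and expanding the logarithm yields the equivalent scalar inequality $2\log A \geq \log 2 + \log A + \log\log A$, that is, $\log A - \log\log A \geq \log 2$. Exponentiating, this is precisely $A/\log A \geq 2$, i.e. $A \geq 2\log A$.

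It then remains to establish the auxiliary inequality $A \geq 2\log A$ for all $A \geq e$. I would dispatch this by the same monotonicity idea applied to $h(A) \eqdef A - 2\log A$: one has $h'(A) = 1 - 2/A > 0$ for $A > 2$, and since $e > 2$ the function $h$ is increasing on $[e,\infty)$ with $h(e) = e - 2 > 0$; hence $h(A) > 0$ throughout the range. Chaining these reductions closes the argument.

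I do not expect a genuine obstacle, as the lemma is elementary. The only mild subtlety is the bookkeeping: one must confirm that the endpoint $2A\log A$ lies in the region where $g$ is increasing (which is exactly where the hypothesis $A \geq e$, via $\log A \geq 1$, is used) and that the side constraint $B \geq e$ is inactive. Everything else is a clean reduction to the single scalar inequality $A \geq 2\log A$.
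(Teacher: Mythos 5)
Your proof is correct and follows essentially the same route as the paper's: both arguments are elementary monotonicity reductions that push the claim to the endpoint $B = 2A\log A$ and then to the scalar inequality $A \geq 2\log A$ (the paper phrases this as $x/\log x \geq e > 2$ on $[e,\infty)$ and works with the increasing function $x \mapsto x/\log x$, while you differentiate $B \mapsto B - A\log B$ instead, a purely cosmetic difference). All the bookkeeping you flag — the endpoint lying in the monotone region via $\log A \geq 1$, and the constraint $B \geq e$ being inactive — checks out.
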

\begin{proof}
The function $x \mapsto x / \log x$ is increasing on $[e, \infty)$ and in this range, it holds that $x / \log x \geq e > 2$.
Hence, for $B \geq 2 A \log A$,
it holds that
\begin{equation*}
    \frac{B}{\log B} \geq \frac{2 A \log A}{\log A + \log (2 \log A)} \geq \frac{2 A \log A}{\log A + \log A} = A,
\end{equation*}
whence the lemma holds.
\end{proof}

\subsection{Proof of Lemma~\ref{lemma:large-deviation-bound-average-mixing-time}}
\label{proof-large-deviation-bound-average-mixing-time}
The proof is standard, and based on the blocking technique credited to Bernstein and further developed in the series of papers \citep{yu1994rates, eberlein1984weak, volkonskii1959some}.
For simplicity, we assume that $n = 2Bs$, where $B, s \in \bbN$.
For $b \in [B]$, we write
\begin{equation*}
\begin{split}
    X^{[2b]} &= X^{[2b]}_1, \dots, X^{[2b]}_s = X_{(2b - 1)s + 1}, \dots, X_{2bs}, \\
    X^{[2b - 1]} &= X^{[2b - 1]}_1, \dots, X^{[2b - 1]}_s = X_{(2b - 2)s + 1}, \dots, X_{(2b - 1)s}, \\
\end{split}
\end{equation*}
and consider the partitioned process
\begin{equation*}
   X = X^{[1]}, X^{[2]}, \dots, X^{[2B]}. 
\end{equation*}
In other words, we decompose the observed trajectory into $B$ even and $B$ odd blocks of size $s$, as illustrated in Figure~\ref{fig:blocking-method}. 
\begin{figure}
    \centering

\tikzset{
pattern size/.store in=\mcSize, 
pattern size = 5pt,
pattern thickness/.store in=\mcThickness, 
pattern thickness = 0.3pt,
pattern radius/.store in=\mcRadius, 
pattern radius = 1pt}
\makeatletter
\pgfutil@ifundefined{pgf@pattern@name@_p4s6kzw11}{
\pgfdeclarepatternformonly[\mcThickness,\mcSize]{_p4s6kzw11}
{\pgfqpoint{0pt}{0pt}}
{\pgfpoint{\mcSize+\mcThickness}{\mcSize+\mcThickness}}
{\pgfpoint{\mcSize}{\mcSize}}
{
\pgfsetcolor{\tikz@pattern@color}
\pgfsetlinewidth{\mcThickness}
\pgfpathmoveto{\pgfqpoint{0pt}{0pt}}
\pgfpathlineto{\pgfpoint{\mcSize+\mcThickness}{\mcSize+\mcThickness}}
\pgfusepath{stroke}
}}
\makeatother

\tikzset{
pattern size/.store in=\mcSize, 
pattern size = 5pt,
pattern thickness/.store in=\mcThickness, 
pattern thickness = 0.3pt,
pattern radius/.store in=\mcRadius, 
pattern radius = 1pt}
\makeatletter
\pgfutil@ifundefined{pgf@pattern@name@_y21xws0f4}{
\pgfdeclarepatternformonly[\mcThickness,\mcSize]{_y21xws0f4}
{\pgfqpoint{0pt}{0pt}}
{\pgfpoint{\mcSize+\mcThickness}{\mcSize+\mcThickness}}
{\pgfpoint{\mcSize}{\mcSize}}
{
\pgfsetcolor{\tikz@pattern@color}
\pgfsetlinewidth{\mcThickness}
\pgfpathmoveto{\pgfqpoint{0pt}{0pt}}
\pgfpathlineto{\pgfpoint{\mcSize+\mcThickness}{\mcSize+\mcThickness}}
\pgfusepath{stroke}
}}
\makeatother

\tikzset{
pattern size/.store in=\mcSize, 
pattern size = 5pt,
pattern thickness/.store in=\mcThickness, 
pattern thickness = 0.3pt,
pattern radius/.store in=\mcRadius, 
pattern radius = 1pt}
\makeatletter
\pgfutil@ifundefined{pgf@pattern@name@_maj9v1h8s}{
\pgfdeclarepatternformonly[\mcThickness,\mcSize]{_maj9v1h8s}
{\pgfqpoint{0pt}{0pt}}
{\pgfpoint{\mcSize+\mcThickness}{\mcSize+\mcThickness}}
{\pgfpoint{\mcSize}{\mcSize}}
{
\pgfsetcolor{\tikz@pattern@color}
\pgfsetlinewidth{\mcThickness}
\pgfpathmoveto{\pgfqpoint{0pt}{0pt}}
\pgfpathlineto{\pgfpoint{\mcSize+\mcThickness}{\mcSize+\mcThickness}}
\pgfusepath{stroke}
}}
\makeatother

\tikzset{
pattern size/.store in=\mcSize, 
pattern size = 5pt,
pattern thickness/.store in=\mcThickness, 
pattern thickness = 0.3pt,
pattern radius/.store in=\mcRadius, 
pattern radius = 1pt}
\makeatletter
\pgfutil@ifundefined{pgf@pattern@name@_2kex6mik8}{
\pgfdeclarepatternformonly[\mcThickness,\mcSize]{_2kex6mik8}
{\pgfqpoint{0pt}{0pt}}
{\pgfpoint{\mcSize+\mcThickness}{\mcSize+\mcThickness}}
{\pgfpoint{\mcSize}{\mcSize}}
{
\pgfsetcolor{\tikz@pattern@color}
\pgfsetlinewidth{\mcThickness}
\pgfpathmoveto{\pgfqpoint{0pt}{0pt}}
\pgfpathlineto{\pgfpoint{\mcSize+\mcThickness}{\mcSize+\mcThickness}}
\pgfusepath{stroke}
}}
\makeatother

\tikzset{
pattern size/.store in=\mcSize, 
pattern size = 5pt,
pattern thickness/.store in=\mcThickness, 
pattern thickness = 0.3pt,
pattern radius/.store in=\mcRadius, 
pattern radius = 1pt}
\makeatletter
\pgfutil@ifundefined{pgf@pattern@name@_tf3g6m8z9}{
\pgfdeclarepatternformonly[\mcThickness,\mcSize]{_tf3g6m8z9}
{\pgfqpoint{0pt}{0pt}}
{\pgfpoint{\mcSize+\mcThickness}{\mcSize+\mcThickness}}
{\pgfpoint{\mcSize}{\mcSize}}
{
\pgfsetcolor{\tikz@pattern@color}
\pgfsetlinewidth{\mcThickness}
\pgfpathmoveto{\pgfqpoint{0pt}{0pt}}
\pgfpathlineto{\pgfpoint{\mcSize+\mcThickness}{\mcSize+\mcThickness}}
\pgfusepath{stroke}
}}
\makeatother

\tikzset{
pattern size/.store in=\mcSize, 
pattern size = 5pt,
pattern thickness/.store in=\mcThickness, 
pattern thickness = 0.3pt,
pattern radius/.store in=\mcRadius, 
pattern radius = 1pt}
\makeatletter
\pgfutil@ifundefined{pgf@pattern@name@_ncx6ofe92}{
\pgfdeclarepatternformonly[\mcThickness,\mcSize]{_ncx6ofe92}
{\pgfqpoint{0pt}{0pt}}
{\pgfpoint{\mcSize+\mcThickness}{\mcSize+\mcThickness}}
{\pgfpoint{\mcSize}{\mcSize}}
{
\pgfsetcolor{\tikz@pattern@color}
\pgfsetlinewidth{\mcThickness}
\pgfpathmoveto{\pgfqpoint{0pt}{0pt}}
\pgfpathlineto{\pgfpoint{\mcSize+\mcThickness}{\mcSize+\mcThickness}}
\pgfusepath{stroke}
}}
\makeatother

\tikzset{
pattern size/.store in=\mcSize, 
pattern size = 5pt,
pattern thickness/.store in=\mcThickness, 
pattern thickness = 0.3pt,
pattern radius/.store in=\mcRadius, 
pattern radius = 1pt}
\makeatletter
\pgfutil@ifundefined{pgf@pattern@name@_cmfop4xza}{
\pgfdeclarepatternformonly[\mcThickness,\mcSize]{_cmfop4xza}
{\pgfqpoint{0pt}{0pt}}
{\pgfpoint{\mcSize+\mcThickness}{\mcSize+\mcThickness}}
{\pgfpoint{\mcSize}{\mcSize}}
{
\pgfsetcolor{\tikz@pattern@color}
\pgfsetlinewidth{\mcThickness}
\pgfpathmoveto{\pgfqpoint{0pt}{0pt}}
\pgfpathlineto{\pgfpoint{\mcSize+\mcThickness}{\mcSize+\mcThickness}}
\pgfusepath{stroke}
}}
\makeatother

\tikzset{
pattern size/.store in=\mcSize, 
pattern size = 5pt,
pattern thickness/.store in=\mcThickness, 
pattern thickness = 0.3pt,
pattern radius/.store in=\mcRadius, 
pattern radius = 1pt}
\makeatletter
\pgfutil@ifundefined{pgf@pattern@name@_ews9cmrxv}{
\pgfdeclarepatternformonly[\mcThickness,\mcSize]{_ews9cmrxv}
{\pgfqpoint{0pt}{0pt}}
{\pgfpoint{\mcSize+\mcThickness}{\mcSize+\mcThickness}}
{\pgfpoint{\mcSize}{\mcSize}}
{
\pgfsetcolor{\tikz@pattern@color}
\pgfsetlinewidth{\mcThickness}
\pgfpathmoveto{\pgfqpoint{0pt}{0pt}}
\pgfpathlineto{\pgfpoint{\mcSize+\mcThickness}{\mcSize+\mcThickness}}
\pgfusepath{stroke}
}}
\makeatother

\tikzset{
pattern size/.store in=\mcSize, 
pattern size = 5pt,
pattern thickness/.store in=\mcThickness, 
pattern thickness = 0.3pt,
pattern radius/.store in=\mcRadius, 
pattern radius = 1pt}
\makeatletter
\pgfutil@ifundefined{pgf@pattern@name@_sxhrdnfa0}{
\pgfdeclarepatternformonly[\mcThickness,\mcSize]{_sxhrdnfa0}
{\pgfqpoint{0pt}{-\mcThickness}}
{\pgfpoint{\mcSize}{\mcSize}}
{\pgfpoint{\mcSize}{\mcSize}}
{
\pgfsetcolor{\tikz@pattern@color}
\pgfsetlinewidth{\mcThickness}
\pgfpathmoveto{\pgfqpoint{0pt}{\mcSize}}
\pgfpathlineto{\pgfpoint{\mcSize+\mcThickness}{-\mcThickness}}
\pgfusepath{stroke}
}}
\makeatother

\tikzset{
pattern size/.store in=\mcSize, 
pattern size = 5pt,
pattern thickness/.store in=\mcThickness, 
pattern thickness = 0.3pt,
pattern radius/.store in=\mcRadius, 
pattern radius = 1pt}
\makeatletter
\pgfutil@ifundefined{pgf@pattern@name@_40t7md2zg}{
\pgfdeclarepatternformonly[\mcThickness,\mcSize]{_40t7md2zg}
{\pgfqpoint{0pt}{-\mcThickness}}
{\pgfpoint{\mcSize}{\mcSize}}
{\pgfpoint{\mcSize}{\mcSize}}
{
\pgfsetcolor{\tikz@pattern@color}
\pgfsetlinewidth{\mcThickness}
\pgfpathmoveto{\pgfqpoint{0pt}{\mcSize}}
\pgfpathlineto{\pgfpoint{\mcSize+\mcThickness}{-\mcThickness}}
\pgfusepath{stroke}
}}
\makeatother

\tikzset{
pattern size/.store in=\mcSize, 
pattern size = 5pt,
pattern thickness/.store in=\mcThickness, 
pattern thickness = 0.3pt,
pattern radius/.store in=\mcRadius, 
pattern radius = 1pt}
\makeatletter
\pgfutil@ifundefined{pgf@pattern@name@_1u56gfhmg}{
\pgfdeclarepatternformonly[\mcThickness,\mcSize]{_1u56gfhmg}
{\pgfqpoint{0pt}{-\mcThickness}}
{\pgfpoint{\mcSize}{\mcSize}}
{\pgfpoint{\mcSize}{\mcSize}}
{
\pgfsetcolor{\tikz@pattern@color}
\pgfsetlinewidth{\mcThickness}
\pgfpathmoveto{\pgfqpoint{0pt}{\mcSize}}
\pgfpathlineto{\pgfpoint{\mcSize+\mcThickness}{-\mcThickness}}
\pgfusepath{stroke}
}}
\makeatother

\tikzset{
pattern size/.store in=\mcSize, 
pattern size = 5pt,
pattern thickness/.store in=\mcThickness, 
pattern thickness = 0.3pt,
pattern radius/.store in=\mcRadius, 
pattern radius = 1pt}
\makeatletter
\pgfutil@ifundefined{pgf@pattern@name@_yteobkg5r}{
\pgfdeclarepatternformonly[\mcThickness,\mcSize]{_yteobkg5r}
{\pgfqpoint{0pt}{-\mcThickness}}
{\pgfpoint{\mcSize}{\mcSize}}
{\pgfpoint{\mcSize}{\mcSize}}
{
\pgfsetcolor{\tikz@pattern@color}
\pgfsetlinewidth{\mcThickness}
\pgfpathmoveto{\pgfqpoint{0pt}{\mcSize}}
\pgfpathlineto{\pgfpoint{\mcSize+\mcThickness}{-\mcThickness}}
\pgfusepath{stroke}
}}
\makeatother

\tikzset{
pattern size/.store in=\mcSize, 
pattern size = 5pt,
pattern thickness/.store in=\mcThickness, 
pattern thickness = 0.3pt,
pattern radius/.store in=\mcRadius, 
pattern radius = 1pt}
\makeatletter
\pgfutil@ifundefined{pgf@pattern@name@_1sf1jxqdc}{
\pgfdeclarepatternformonly[\mcThickness,\mcSize]{_1sf1jxqdc}
{\pgfqpoint{0pt}{-\mcThickness}}
{\pgfpoint{\mcSize}{\mcSize}}
{\pgfpoint{\mcSize}{\mcSize}}
{
\pgfsetcolor{\tikz@pattern@color}
\pgfsetlinewidth{\mcThickness}
\pgfpathmoveto{\pgfqpoint{0pt}{\mcSize}}
\pgfpathlineto{\pgfpoint{\mcSize+\mcThickness}{-\mcThickness}}
\pgfusepath{stroke}
}}
\makeatother

\tikzset{
pattern size/.store in=\mcSize, 
pattern size = 5pt,
pattern thickness/.store in=\mcThickness, 
pattern thickness = 0.3pt,
pattern radius/.store in=\mcRadius, 
pattern radius = 1pt}
\makeatletter
\pgfutil@ifundefined{pgf@pattern@name@_otxor0l8h}{
\pgfdeclarepatternformonly[\mcThickness,\mcSize]{_otxor0l8h}
{\pgfqpoint{0pt}{-\mcThickness}}
{\pgfpoint{\mcSize}{\mcSize}}
{\pgfpoint{\mcSize}{\mcSize}}
{
\pgfsetcolor{\tikz@pattern@color}
\pgfsetlinewidth{\mcThickness}
\pgfpathmoveto{\pgfqpoint{0pt}{\mcSize}}
\pgfpathlineto{\pgfpoint{\mcSize+\mcThickness}{-\mcThickness}}
\pgfusepath{stroke}
}}
\makeatother

\tikzset{
pattern size/.store in=\mcSize, 
pattern size = 5pt,
pattern thickness/.store in=\mcThickness, 
pattern thickness = 0.3pt,
pattern radius/.store in=\mcRadius, 
pattern radius = 1pt}
\makeatletter
\pgfutil@ifundefined{pgf@pattern@name@_j2nrr9l9f}{
\pgfdeclarepatternformonly[\mcThickness,\mcSize]{_j2nrr9l9f}
{\pgfqpoint{0pt}{-\mcThickness}}
{\pgfpoint{\mcSize}{\mcSize}}
{\pgfpoint{\mcSize}{\mcSize}}
{
\pgfsetcolor{\tikz@pattern@color}
\pgfsetlinewidth{\mcThickness}
\pgfpathmoveto{\pgfqpoint{0pt}{\mcSize}}
\pgfpathlineto{\pgfpoint{\mcSize+\mcThickness}{-\mcThickness}}
\pgfusepath{stroke}
}}
\makeatother

\tikzset{
pattern size/.store in=\mcSize, 
pattern size = 5pt,
pattern thickness/.store in=\mcThickness, 
pattern thickness = 0.3pt,
pattern radius/.store in=\mcRadius, 
pattern radius = 1pt}
\makeatletter
\pgfutil@ifundefined{pgf@pattern@name@_b3q8sgbb5}{
\pgfdeclarepatternformonly[\mcThickness,\mcSize]{_b3q8sgbb5}
{\pgfqpoint{0pt}{-\mcThickness}}
{\pgfpoint{\mcSize}{\mcSize}}
{\pgfpoint{\mcSize}{\mcSize}}
{
\pgfsetcolor{\tikz@pattern@color}
\pgfsetlinewidth{\mcThickness}
\pgfpathmoveto{\pgfqpoint{0pt}{\mcSize}}
\pgfpathlineto{\pgfpoint{\mcSize+\mcThickness}{-\mcThickness}}
\pgfusepath{stroke}
}}
\makeatother
\tikzset{every picture/.style={line width=0.75pt}} %

\begin{tikzpicture}[x=1.2pt,y=1.2pt,yscale=-1,xscale=1]

\draw  [draw opacity=0] (120,81) -- (440,81) -- (440,101) -- (120,101) -- cycle ; \draw   (140,81) -- (140,101)(160,81) -- (160,101)(180,81) -- (180,101)(200,81) -- (200,101)(220,81) -- (220,101)(240,81) -- (240,101)(260,81) -- (260,101)(280,81) -- (280,101)(300,81) -- (300,101)(320,81) -- (320,101)(340,81) -- (340,101)(360,81) -- (360,101)(380,81) -- (380,101)(400,81) -- (400,101)(420,81) -- (420,101) ; \draw    ; \draw   (120,81) -- (440,81) -- (440,101) -- (120,101) -- cycle ;
\draw  [pattern=_p4s6kzw11,pattern size=3pt,pattern thickness=0.75pt,pattern radius=0pt, pattern color=white] (120,81) -- (140,81) -- (140,101) -- (120,101) -- cycle ;
\draw  [pattern=_y21xws0f4,pattern size=3pt,pattern thickness=0.75pt,pattern radius=0pt, pattern color=white] (160,81) -- (180,81) -- (180,101) -- (160,101) -- cycle ;
\draw  [pattern=_maj9v1h8s,pattern size=3pt,pattern thickness=0.75pt,pattern radius=0pt, pattern color=white] (200,81) -- (220,81) -- (220,101) -- (200,101) -- cycle ;
\draw  [pattern=_2kex6mik8,pattern size=3pt,pattern thickness=0.75pt,pattern radius=0pt, pattern color=white] (240,81) -- (260,81) -- (260,101) -- (240,101) -- cycle ;
\draw  [pattern=_tf3g6m8z9,pattern size=3pt,pattern thickness=0.75pt,pattern radius=0pt, pattern color=white] (280,81) -- (300,81) -- (300,101) -- (280,101) -- cycle ;
\draw  [pattern=_ncx6ofe92,pattern size=3pt,pattern thickness=0.75pt,pattern radius=0pt, pattern color=white] (320,81) -- (340,81) -- (340,101) -- (320,101) -- cycle ;
\draw  [pattern=_cmfop4xza,pattern size=3pt,pattern thickness=0.75pt,pattern radius=0pt, pattern color=white] (360,81) -- (380,81) -- (380,101) -- (360,101) -- cycle ;
\draw  [pattern=_ews9cmrxv,pattern size=3pt,pattern thickness=0.75pt,pattern radius=0pt, pattern color=white] (400,81) -- (420,81) -- (420,101) -- (400,101) -- cycle ;

\draw  [pattern=_sxhrdnfa0,pattern size=3pt,pattern thickness=0.75pt,pattern radius=0pt, pattern color=wred] (140,81) -- (160,81) -- (160,101) -- (140,101) -- cycle ;
\draw  [pattern=_40t7md2zg,pattern size=3pt,pattern thickness=0.75pt,pattern radius=0pt, pattern color=wred] (180,81) -- (200,81) -- (200,101) -- (180,101) -- cycle ;
\draw  [pattern=_1u56gfhmg,pattern size=3pt,pattern thickness=0.75pt,pattern radius=0pt, pattern color=wred] (220,81) -- (240,81) -- (240,101) -- (220,101) -- cycle ;
\draw  [pattern=_yteobkg5r,pattern size=3pt,pattern thickness=0.75pt,pattern radius=0pt, pattern color=wred] (260,81) -- (280,81) -- (280,101) -- (260,101) -- cycle ;
\draw  [pattern=_1sf1jxqdc,pattern size=3pt,pattern thickness=0.75pt,pattern radius=0pt, pattern color=wred] (300,81) -- (320,81) -- (320,101) -- (300,101) -- cycle ;
\draw  [pattern=_otxor0l8h,pattern size=3pt,pattern thickness=0.75pt,pattern radius=0pt, pattern color=wred] (340,81) -- (360,81) -- (360,101) -- (340,101) -- cycle ;
\draw  [pattern=_j2nrr9l9f,pattern size=3pt,pattern thickness=0.75pt,pattern radius=0pt, pattern color=wred] (380,81) -- (400,81) -- (400,101) -- (380,101) -- cycle ;
\draw  [pattern=_b3q8sgbb5,pattern size=3pt,pattern thickness=0.75pt,pattern radius=0pt, pattern color=wred] (420,81) -- (440,81) -- (440,101) -- (420,101) -- cycle ;

\draw [color={rgb, 255:red, 155; green, 155; blue, 155 }  ,draw opacity=1 ]   (140,120) -- (160,120) ;
\draw [shift={(160,120)}, rotate = 180] [color={rgb, 255:red, 155; green, 155; blue, 155 }  ,draw opacity=1 ][line width=0.75]    (0,5.59) -- (0,-5.59)   ;
\draw [shift={(140,120)}, rotate = 180] [color={rgb, 255:red, 155; green, 155; blue, 155 }  ,draw opacity=1 ][line width=0.75]    (0,5.59) -- (0,-5.59)   ;
\draw [color={rgb, 255:red, 155; green, 155; blue, 155 }  ,draw opacity=1 ]   (120,71) -- (440,71) ;
\draw [shift={(440,71)}, rotate = 180] [color={rgb, 255:red, 155; green, 155; blue, 155 }  ,draw opacity=1 ][line width=0.75]    (0,5.59) -- (0,-5.59)   ;
\draw [shift={(120,71)}, rotate = 180] [color={rgb, 255:red, 155; green, 155; blue, 155 }  ,draw opacity=1 ][line width=0.75]    (0,5.59) -- (0,-5.59)   ;

\draw (147,123) node [anchor=north west][inner sep=0.75pt]   [align=left] {$s$};
\draw (259,60) node [anchor=north west][inner sep=0.75pt]   [align=left] {$n = 2Bs$};

\draw (125,102) node [anchor=north west][inner sep=0.75pt]   [align=left] {$X^{[1]}$};

\draw (145,102) node [anchor=north west][inner sep=0.75pt]   [align=left] {$X^{[2]}$};

\draw (165,102) node [anchor=north west][inner sep=0.75pt]   [align=left] {$X^{[3]}$};

\draw (185,102) node [anchor=north west][inner sep=0.75pt]   [align=left] {$X^{[4]}$};

\draw (205,106) node [anchor=north west][inner sep=0.75pt]   [align=left] {$\cdots$};

\draw (405,106) node [anchor=north west][inner sep=0.75pt]   [align=left] {$\cdots$};

\draw (420,102) node [anchor=north west][inner sep=0.75pt]   [align=left] {$X^{[2B]}$};

\end{tikzpicture}
    \caption{Decomposing the process into Bernstein blocks.}
    \label{fig:blocking-method}
\end{figure}
Invoking Hoeffding's inequality \citep{hoeffding1963probability} and \citet[Corollary~2.7]{yu1994rates}, it holds that
\begin{equation}
\label{eq:decomposition-blocking-and-concentration}
\begin{split}
    \PR{ \frac{1}{n} \sum_{t = 1}^n f(X_t) > \eps } &\leq \sum_{\sigma \in \set{0,1}} \PR{ \sum_{b = 1}^{B} \sum_{t = 1}^s f\left(X_t^{[2b - \sigma]}\right) > n \eps/2 } \\ &\leq 2 \exp\left( -\frac{n \eps^2}{4 s} \right) + 2 \left(B - 1\right) \beta(s).
\end{split}
\end{equation}
We first prove the claim in the sub-exponential $\beta$-mixing setting, that is for $\beta(s) \leq \beta_ 0 e^{- \beta_1 s^b}$ with parameters $b, \beta_0, \beta_1$ in their specified range.
For any $\xi \in (0, 1)$, it holds that
\begin{equation*}
    \atmix(\xi) \leq  \ceil*{\left(\frac{1}{\beta_1} \log \frac{\beta_0}{\xi} \right)^{1/b}}.
\end{equation*}
Observing that
\begin{equation*}
    2(B - 1) \beta(s) \leq \frac{n \beta_0}{s} \exp(-\beta_1 s^b),
\end{equation*}
and denoting $W_0$ the principal branch of the product logarithm Lambert function \citep{corless1996lambert},
for
\begin{equation*}
   s = \ceil*{ \left( \frac{1}{b \beta_1} W_0 \left( b \beta_1 \left[ \frac{2n \beta_0}{\delta} \right]^{b}\right) \right)^{1/b} },
\end{equation*}
it holds that $2(B - 1) \beta(s) \leq \delta/2$.
As a consequence, the deviation probability at \eqref{eq:decomposition-blocking-and-concentration} is bounded by $\delta$ when the trajectory length $n$ satisfies the inequality,
\begin{equation*}
    n \geq \frac{4}{\eps^2} \ceil*{ \left( \frac{1}{b \beta_1} W_0 \left( b \beta_1 \left[ \frac{2n \beta_0}{\delta} \right]^{b}\right) \right)^{1/b} } \log \left(\frac{4}{\delta} \right).
\end{equation*}
We now give an approximate albeit more tractable sufficient condition on $n$.
Since $b \beta_1 \leq 1$, 
for the above argument of $W_0$ to be no smaller than $e$, it is enough to have,
\begin{equation*}
    n \geq \frac{\delta}{2 \beta_0} \left( \frac{e}{b \beta_1}\right)^{1/b}.
\end{equation*}
Since $\beta_0 \geq 1, b \leq 1$ and $x \leq \frac{1}{e} \implies x \leq \log \frac{1}{x}$,
\begin{equation*}
    \frac{\delta}{2 \beta_0} \left( \frac{e}{b \beta_1}\right)^{1/b} \leq \frac{e}{2} \log \left( \frac{e \beta_0}{\delta} \right) \left( \frac{e}{b \beta_1}\right)^{1/b} \leq \frac{e}{2}  \left( \frac{e}{b \beta_1} \log \left( \frac{e \beta_0}{\delta} \right)
 \right)^{1/b} \leq \frac{e}{2} \left( \frac{e}{b}\right)^{1/b} \atmix(\delta/e).
\end{equation*}
Therefore,
\begin{equation*}
    n \geq \frac{e}{2} \left( \frac{e}{b}\right)^{1/b} \atmix(\delta/e),
\end{equation*}
is sufficient.
Further relying on the upper bounds 
$$x \geq e \implies W_0(x) \leq \log(x) \;\text{and}\; x \geq 1 \implies \ceil{x} \leq 2x,$$ it suffices for $n$ to satisfy
\begin{equation*}
    n^b \geq \left( \frac{8}{\eps^{2}} \log \left( \frac{4}{\delta} \right) \right)^{b} \frac{1}{b \beta_1} \log \left(  b\beta_1 \left[\frac{2 n \beta_0}{\delta}  \right]^b \right) .
\end{equation*}
Setting 
\begin{equation*}
    A = \left[ \frac{16 \beta_0}{\delta \eps^2} \log \left(\frac{4}{\delta}\right) \right]^{b}, \qquad B = b \beta_1 \left[ \frac{2n \beta_0}{\delta} \right]^b,
\end{equation*}
and invoking Lemma~\ref{lemma:approximate-transcendental-inequality-solver},
it is enough that
\begin{equation*}
    n \geq \frac{8}{\eps^2} \log \left( \frac{4}{\delta}\right)  2^{1/b} \atmix\left( \xi(\eps, \delta) \right),
\end{equation*}
with $\xi(\eps, \delta) = \frac{\delta \eps^2}{16 \log \left(\frac{4}{\delta}\right)}$.
In order to streamline this upper bound, observe that
\begin{equation*}
    \frac{1}{\beta_1} \log \left( \frac{\beta_0}{\xi(\eps, \delta)} \right) \leq \atmix(\delta/e) + \atmix(\delta/e) \log \left( \frac{16 \log(4/\delta)}{e \eps^2} \right),
\end{equation*}
and recall that for $b \in (0,1]$
$$x,y \in \bbR_+ \implies (x + y)^{1/b} \leq \frac{1}{b} \left( x^{1/b} + y^{1/b} \right).$$
Finally, let us now move on to the geometric case, that is,
\begin{equation*}
\beta(s) \leq \frac{\beta_1}{s^b},   
\end{equation*}
for some $\beta_1,b>0$.
For any $\xi \in (0,1)$, it holds that
\begin{equation*}
 \atmix(\xi) \leq \ceil*{\left(\frac{\beta_1}{\xi} \right)^{1/b}}.
\end{equation*}
Having $\beta_1 \geq 1$ ensures that $\frac{2n \beta_1}{\delta} \geq 1$.
Setting
\begin{equation*}
s  = 
\ceil*{\left(\frac{2 n \beta_1}{\delta} \right)^{\frac{1}{b+1}}},
\end{equation*}
yields
\begin{equation*}
    \frac{n}{s} \beta(s) \leq \frac{n \beta_1}{s^{b+1}} \leq n \beta_1 \left( \frac{\delta}{2n \beta_1} \right)  = \frac{\delta}{2}.
\end{equation*}
Finally, setting 
\begin{equation*}
    n \geq \left( \frac{8}{\eps^2} \log \frac{4}{\delta} \right)^{\frac{b+1}{b}} \left( \frac{2 \beta_1}{\delta} \right)^{\frac{1}{b}}
\end{equation*}
leads to
\begin{equation*}
    2 \exp\left( - \frac{n \eps^2}{4 s} \right) \leq 2 \exp\left( - \frac{n \eps^2}{8 \left(\frac{2 n \beta_1}{\delta} \right)^{\frac{1}{b+1}}} \right) \leq \frac{\delta}{2}.
\end{equation*}
For both the sub-exponential and the geometric case, an adaption of the proof handles the general case where $n$ is not divisible by the block size $s$, modulo paying a universal multiplicative constant.
\qed

\subsection{Proof of Lemma~\ref{lemma:spectral-upper-bound}}
The proof closely follows the techniques in \citet[Lemma~12.2, Theorem~12.4]{levin2009markov}, albeit in a countably infinite alphabet setting. 
The linear operator defined by  $P$ is self-adjoint in $\ell_2(\pi)$, hence normal. The assumption on compactness of $P$ thus enables diagonalization methods.
Suppose that $P$ is reversible.
We can write,
\begin{equation*}
\begin{split}
d^{\sharp}(t) &\eqdef \sum_{x \in \calX} \pi(x) \tv{ \delta_x P^t - \pi } \\
&= \frac{1}{2} \sum_{x, x' \in \calX} \pi(x) \abs{ P^t(x,x') - \pi(x') } \\
&= \frac{1}{2} \sum_{x, x' \in \calX} \pi(x) \pi(x') \abs{ \frac{P^t(x,x')}{\pi(x')} - 1 } \\
&\stackrel{(\spadesuit)}{\leq} \frac{1}{2} \sum_{x, x' \in \calX} \pi(x) \pi(x') \frac{(1 - \gamma_\star)^t}{\sqrt{\pi(x)\pi(x')}} \\
&= \frac{1}{2} (1 - \gamma_\star)^t \sum_{x, x' \in \calX} {\sqrt{\pi(x)\pi(x')}} \\
&= \frac{1}{2} (1 - \gamma_\star)^t \nrm{\pi}_{1/2}. \\
\end{split}
\end{equation*}
For the inequality ($\spadesuit$), we used compactness of $P$ and the fact \citet[Lemma~12.2]{levin2009markov} that for the $\langle \cdot, \cdot \rangle_\pi$-orthonormal basis of real-valued
eigenfunctions $(f_i)_{x \in \calX}$,
\begin{equation*}
\begin{split}
    P^t(x,x') &= \sum_{i =1}^{\abs{\calX}} f_i(x') \pi(x') \lambda^t_i f_i(x) = \pi(x') + \sum_{i =2}^{\abs{\calX}} f_i(x') \pi(x') \lambda^t_i f_i(x) \\
    \delta_x &= \sum_{i = 1}^{\abs{\calX}} \langle \delta_x, f_i \rangle_\pi f_i = \sum_{i = 1}^{\abs{\calX}} f_i(x) \pi(x) f_i
\end{split}
\end{equation*}
\begin{equation*}
    \pi(x) = \langle \delta_x, \delta_x \rangle_\pi = \Biggl \langle \sum_{i = 1}^{\abs{\calX}} f_i(x) \pi(x) f_i, \sum_{i = 1}^{\abs{\calX}} f_i(x) \pi(x) f_i \Biggr \rangle_\pi = \pi(x)^2 \sum_{i = 1}^{\abs{\calX}} f_i(x)^2,
\end{equation*}
thus as in \citet[Theorem~12.4]{levin2009markov},
\begin{equation*}
    \abs{\frac{P^t(x,x')}{\pi(x')} - 1} \leq \abs{1 - \gamma_\star}^t \sqrt{ 
\sum_{i=2}^{\abs{\calX}} f_i^2(x) \sum_{i=2}^{\abs{\calX}} f_i^2(x')}.
\end{equation*}
Solving for a proximity parameter $\xi \in (0,1)$,
we obtain that
\begin{equation*}
    \atmix(\xi) \leq \trel \log \frac{\nrm{\pi}_{1/2}}{2 \xi}.
\end{equation*}
\qed

\subsection{Proof of Theorem~\ref{theorem:estimation-beta-coefficients-skip-wise-without-tmix}}
The proof of Theorem~\ref{theorem:estimation-beta-coefficients-skip-wise-without-tmix} will rely on the following lemma.
\begin{lemma}
    \label{lemma:variance-markov-visit-counts}
Let $X_1, \dots, X_n$ be a Markov chain over a countable state space $\calX$ with stationary distribution $\pi$. For any $x \in \calX$, and any $p \in [1, \infty)$, it holds that
    \begin{equation*}
    \begin{split}
\Var[\pi]{N_{x}} &\leq 4 (n - 1) \pi(x)^{1-1/p} \left(\frac{\beta(0)^{1/p}}{2} + \sum_{t = 1}^{n - 2} \beta(t)^{1/p} \right) \leq 4 (n - 1) \pi(x)^{1-1/p} \calB_p,
    \end{split}
\end{equation*}
where $N_x = \sum_{t=1}^{n-1} \pred{X_t = x}$
counts the number of visits to state $x \in \calX$.
\end{lemma}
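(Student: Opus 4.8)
The plan is to write $N_x = \sum_{t=1}^{n-1} Y_t$ with $Y_t \eqdef \pred{X_t = x}$, expand the variance into its diagonal and off-diagonal contributions, and control each covariance through Rio's covariance inequality combined with a single interpolation estimate. By stationarity each $Y_t$ is Bernoulli with mean $\pi(x)$, so that $\Var[\pi]{Y_t} = \pi(x)(1 - \pi(x))$, and
\begin{equation*}
\Var[\pi]{N_x} = \sum_{t=1}^{n-1}\Var[\pi]{Y_t} + 2\sum_{1 \le s < t \le n-1}\Cov(Y_s, Y_t).
\end{equation*}

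The heart of the argument is the covariance bound. I would first record that, since $X$ is a stationary Markov chain, the $\beta$-mixing coefficient between the two one-dimensional $\sigma$-fields $\sigma(X_s)$ and $\sigma(X_t)$ is exactly $\beta(t-s)$: taking the value-partitions of $X_s$ and $X_t$ in the definition of $\beta(\cdot,\cdot)$ reproduces $\sum_x \pi(x)\tv{e_x P^{t-s} - \pi}$, which is the Davydov identity recalled in \appenref{Section~\ref{section:connection-beta-mixing}}. Applying Rio's covariance inequality \citep[Corollary~1.1]{rio1999theorie}, and noting that the quantile function of the indicator $Y_t$ is $u \mapsto \pred{u < \pi(x)}$, I obtain for $k = t-s \ge 1$
\begin{equation*}
\abs{\Cov(Y_s, Y_t)} \le 2 \int_0^{\beta(k)} \pred{u < \pi(x)}\,du = 2\min\set{\beta(k), \pi(x)}.
\end{equation*}
The key interpolation step is $\min\set{\beta(k),\pi(x)} \le \pi(x)^{1 - 1/p}\beta(k)^{1/p}$, valid for every $p \ge 1$ (if $\pi(x)\le\beta(k)$ the left side is $\pi(x)=\pi(x)^{1-1/p}\pi(x)^{1/p}\le \pi(x)^{1-1/p}\beta(k)^{1/p}$, and symmetrically otherwise), yielding $\abs{\Cov(Y_s,Y_t)} \le 2\pi(x)^{1-1/p}\beta(k)^{1/p}$. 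For the diagonal I would use $\beta(0) = 1 - \nrm{\pi}_2^2 = \sum_{y}\pi(y)(1-\pi(y)) \ge \pi(x)(1-\pi(x))$, so that the same interpolation gives $\Var[\pi]{Y_t} = \pi(x)(1-\pi(x)) \le \min\set{\beta(0),\pi(x)} \le \pi(x)^{1-1/p}\beta(0)^{1/p}$.

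It then remains to sum over lags. There are $n-1-k \le n-1$ ordered pairs at lag $k$, so
\begin{align*}
\Var[\pi]{N_x}
&\le (n-1)\,\pi(x)^{1-1/p}\beta(0)^{1/p} + 4(n-1)\,\pi(x)^{1-1/p}\sum_{k=1}^{n-2}\beta(k)^{1/p} \\
&\le 4(n-1)\,\pi(x)^{1-1/p}\left(\frac{\beta(0)^{1/p}}{2} + \sum_{k=1}^{n-2}\beta(k)^{1/p}\right),
\end{align*}
and the final bound follows since $\frac{\beta(0)^{1/p}}{2} + \sum_{k=1}^{n-2}\beta(k)^{1/p} \le \sum_{t=0}^{n-2}\beta(t)^{1/p} = \calB_p$. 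The main obstacle is the covariance estimate: correctly identifying the pairwise mixing coefficient as $\beta(t-s)$ and evaluating the quantile integral to $\min\set{\beta,\pi(x)}$, after which the interpolation inequality is precisely what produces the characteristic $\pi(x)^{1-1/p}\beta^{1/p}$ shape and lets the factor $p$ be tuned downstream; the constant $4$ is then simply the product of the factor $2$ in Rio's inequality and the factor $2$ from the symmetric double sum.
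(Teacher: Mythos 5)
Your proof is correct and follows essentially the same route as the paper: the same stationary variance decomposition into diagonal and lagged covariance terms, with each covariance bounded via a Rio-type covariance inequality to obtain exactly the paper's intermediate estimate $\abs{\Cov(Y_s,Y_{s+k})} \leq 2\,\pi(x)^{1-1/p}\beta(k)^{1/p}$. The only cosmetic difference is that you reach this bound through the quantile-integral form of Rio's inequality followed by the interpolation $\min\set{a,b}\le a^{1/p}b^{1-1/p}$, whereas the paper invokes the H\"older-triplet form $2(2\alpha(t))^{1/p}\nrm{\cdot}_{\pi,r}\nrm{\cdot}_{\pi,q}$ together with $2\alpha(t)\le\beta(t)$; both yield the identical final bound.
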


\begin{proof}
    The sequence $(\pred{X_t = x})_{t \in \bbN}$
being stationary at second order,
we expand the variance of the sum \citep[(1.6)]{rio1999theorie} as,
\begin{equation}
\label{eq:sum-variance-decomposition}
    \Var[\pi]{N_{x}} = (n - 1) \Var[\pi]{\pred{X_1 = x}} + 2 \sum_{t = 1}^{n - 2} (n - 1 - t) \Cov_\pi(\pred{X_1 = x}, \pred{X_{1 + t} = x}).
\end{equation}
From \citet[Theorem~1]{rio1993covariance}\footnote{This generalization of Ibragimov's covariance inequality \citep{ibragimov1962some} also appears in \citet{davydov1968convergence} but with a weaker constant.}---see also \citet[(1.12b)]{rio1999theorie} and \citet[Theorem~3]{doukhan2012mixing}---
for a H\"{o}lder triplet $p, r, q \geq 1, \frac{1}{p} + \frac{1}{q} + \frac{1}{r} = 1$ and any $t \in \bbN \cup \set{0}$,
\begin{equation*}
\begin{split}
    \Cov_\pi(\pred{X_1 = x}, \pred{X_{1 + t} = x}) &\leq 2 (2\alpha(t))^{1/p} \nrm{\pred{X_1 = x}}_{\pi, r} \nrm{\pred{X_t = x}}_{\pi, q} \\
    &\leq 2 \beta(t)^{1/p} \pi(x)^{1 - 1/p},
\end{split}
\end{equation*}
where $\alpha$ is Rosenblatt's strong mixing coefficient\footnote{We note that $\alpha$ is defined as twice Rosenblatt's coefficient in \citet{rio1999theorie}.} \citep{rosenblatt1956central}, we wrote $\nrm{X}_{\pi, r} = \left(\bbE_\pi \abs{X}^r\right)^{1/r}$ for a real random variable $X$, and for the second inequality, we relied on the inequality $2 \alpha(t) \leq \beta(t)$ \citep[(1.11)]{bradley2005basic}.
Plugging in \eqref{eq:sum-variance-decomposition}, it follows that
\begin{equation*}
    \begin{split}
\Var[\pi]{N_{x}} &\leq 2(n - 1) \beta(0)^{1/p} \pi(x)^{1 - 1/p} + 4 \sum_{t = 1}^{n - 2} (n - 1 - t)  \beta(t)^{1/p} \pi(x)^{1 - 1/p} \\
&\leq 4 (n - 1) \pi(x)^{1-1/p} \left(\frac{\beta(0)^{1/p}}{2} + \sum_{t = 1}^{n - 2} \beta(t)^{1/p} \right).
    \end{split}
\end{equation*}
\end{proof}

Moving on to the proof of Theorem~\ref{theorem:estimation-beta-coefficients-skip-wise-without-tmix},
for $s \in \bbN$, by definition of $\widehat{\beta}(s)$ in \eqref{eq:estimator-beta-s},
\begin{equation*}
\begin{split}
    2 \bbE_\pi \abs{ \widehat{\beta}(s) - \beta(s) } =& \bbE_\pi \abs{ \frac{1}{\floor{(n-1)/s}} \sum_{x, x' \in \calX}  \abs{N_{x x'}^{(s)} -  \frac{N^{(s)}_x N^{(s)}_{x'}}{\floor{(n-1)/s}}} - \sum_{x \in \calX} \pi(x) \nrm{ e_x P^s - \pi }_1 } \\
    \leq&  \bbE_\pi  \sum_{x, x' \in \calX}  \abs{\frac{N_{x x'}^{(s)}}{\floor{(n-1)/s}} -    \pi(x)P^s(x,x') } \\ 
    &+ \bbE_\pi  \sum_{x, x' \in \calX} \abs{ \frac{N^{(s)}_x N^{(s)}_{x'}}{\floor{(n-1)/s}^2} - \pi(x)\pi(x')}.
\end{split}
\end{equation*}
With a symmetry argument, we further decompose the argument in the second expectation as
\begin{equation*}
\begin{split}
    \sum_{x, x' \in \calX} \abs{ \frac{N^{(s)}_x N^{(s)}_{x'}}{\floor{(n-1)/s}^2} - \pi(x)\pi(x')}
    &\leq 2\sum_{x \in \calX} \abs{ \frac{N^{(s)}_x}{\floor{(n-1)/s}} - \pi(x)}.
\end{split}
\end{equation*}
By stationarity,
\begin{equation*}
    \E[\pi]{\frac{N^{(s)}_x}{\floor{(n-1)/s}}} = \pi(x), \qquad \E[\pi]{\frac{N_{x x'}^{(s)}}{\floor{(n-1)/s}}} = \pi(x)P^s(x,x'),
\end{equation*}
and it follows from Jensen's inequality that
    \begin{equation}
    \label{eq:absolute-deviation-variance-bound}
\begin{split}
    2 \bbE_\pi \abs{ \widehat{\beta}(s) - \beta(s) } \leq \frac{1}{\floor{(n-1)/s}} \left( \sum_{x, x' \in \calX} \sqrt{\Var[\pi]{N_{xx'}^{(s)}}} + 2 \sum_{x \in \calX} \sqrt{\Var[\pi]{N_{x}^{(s)}}} \right).
\end{split}
\end{equation}
We proceed to bound the two variance terms.
From Lemma~\ref{lemma:variance-markov-visit-counts}, for a fixed $x \in \calX$ and any $s < n$, 
noting that the mixing coefficient $\beta^{(s)}(t)$ pertaining to the $s$-skipped chain satisfies $\beta^{(s)}(t) = \beta(st)$, we obtain
\begin{equation*}
    \begin{split}
    \Var[\pi]{N_{x}^{(s)}} 
    &\leq 4 \floor*{\frac{n-1}{s}} \pi(x)^{1 - 1/p} \calB^{(s)}_p. \\
    \end{split}
\end{equation*}
The process consisting of a sliding window over pairs,
\begin{equation*}
    \{E_t\}_{t \in \bbN} \eqdef \set{ \left( X_{t}, X_{t +1} \right) }_{t \in \bbN}    
\end{equation*}
forms a Markov chain \citep[Lemma~6.1]{wolfer2021} over the state space 
$$\calE = \set{(x,x') \in \calX^2 \colon P(x,x') > 0 },$$ called the Hudson expansion \citep{kemeny1983finite, wolfer2024geometric} of the original chain, with transition operator
\begin{equation*}
    \overline{P}(e = (x_1, x_2), e' = (x_1', x_2') ) = \pred{ x_2 = x_1' } P(x_2, x_2'), \forall (e,e') \in \calE^2,
\end{equation*}
and with stationary distribution $\overline{\pi}((x,x')) = \pi(x)P(x,x')$.
Writing $\overline{\beta}(s)$ for the $\beta$-mixing coefficient of $\overline{P}$, we compute
\begin{equation*}
    \begin{split}
    \overline{\beta}(0) &= 1 - \nrm{\overline{\pi}}_2^2, \\
    \overline{\beta}(1) &= \beta(0) = 1 - \nrm{\pi}_2^2.
    \end{split}
\end{equation*}
Moreover, it follows from the Markov property that for $t \in \bbN$,
$$\overline{P}^{t+1}((x_1, x_2), (x_1', x_2')) = P(x_1', x_2') P^t(x_2, x_1'),$$
and thus,
\begin{equation*}
\begin{split}
    \overline{\beta}(t+1) &= \frac{1}{2}\sum_{e = (x_1, x_2) \in \calE} \overline{\pi}(e) \sum_{e' = (x_1', x_2') \in \calE} \abs{\overline{P}^{t+1}(e, e') - \overline{\pi}(e')} \\
    &= \frac{1}{2}\sum_{(x_1, x_2) \in \calE} \pi(x_1) P(x_1, x_2) \sum_{(x'_1, x'_2) \in \calE} \abs{ P^t(x_2, x_1') - \pi(x_1')}P(x_1', x_2') \\
    &= \frac{1}{2}\sum_{x_2 \in \calX} \pi(x_2) \sum_{x'_1 \in \calX} \abs{ P^t(x_2, x_1') - \pi(x_1')} = \beta(t).
\end{split}
\end{equation*}
More generally, for $s \in \bbN, t \in \bbN \cup \set{0}$, $1 \leq s \leq n - 1$, and $0 \leq t < \floor{(n-1) / s}$,
\begin{equation*}
    \overline{\beta}^{(s)}(t + 1) = \beta^{(s)}(t) = \beta(st).
\end{equation*}
An application of Lemma~\ref{lemma:variance-markov-visit-counts} on the Hudson expanded chain yields that
\begin{equation*}
    \Var[\pi]{N_{xx'}^{(s)}} \leq 4 \floor*{\frac{n-1}{s}} Q^{(s)}(x, x')^{1 - 1/p} \left( \frac{\overline{\beta}^{(s)}(0)^{1/p}}{2} + \sum_{t = 1}^{\floor{(n-1)/s} - 1} \overline{\beta}^{(s)}(t)^{1/p} \right),
\end{equation*}
where we recall that $Q^{(s)}(x,x') = \pi(x)P^s(x,x')$. We further bound
\begin{equation*}
\begin{split}
   \frac{\overline{\beta}^{(s)}(0)^{1/p}}{2} + \sum_{t = 1}^{\floor{(n-1)/s} - 1} \overline{\beta}^{(s)}(t)^{1/p} &=  \frac{1}{2}\left( 1 - \nrm{Q^{(s)}}_2^2 \right) + \sum_{t = 1}^{\floor{(n-1)/s} - 1} \beta(s(t - 1))^{1/p} \\ 
    &\leq 1/2 + \sum_{t = 0}^{\floor{(n-1)/s} - 2} \beta(st)^{1/p} \leq 1/2 + \calB_{p}^{(s)},
    \end{split}
\end{equation*}
and plugging in \eqref{eq:absolute-deviation-variance-bound},
\begin{equation*}
\begin{split}
    \bbE_{\pi} \abs{ \widehat{\beta}(s) - \beta(s) } 
    &\leq \sqrt{\frac{1}{\floor{(n-1)/s}}} \left( \nrm{Q^{(s)}}_{(1 - 1/p)/2}^{(1 - 1/p)/2} \sqrt{1/2 + \calB_p^{(s)}} + 2  \nrm{\pi}_{(1 - 1/p)/2}^{(1 - 1/p)/2} \sqrt{\calB_p^{(s)} }\right).
\end{split}
\end{equation*}
Finally, the claim can be obtained from observing that $\nrm{Q^{(s)}}_{(1 - 1/p)/2} \geq \nrm{\pi}_{(1 - 1/p)/2}$.
\qed

\subsection{Proof of Lemma~\ref{lemma:bound-mixing-factor}}
Let us first analyze the exponential case, by assuming that $\beta(s) \leq \beta_0 \exp(-\beta_1 s)$. 
\begin{equation*}
\begin{split}
    \calB_{p}^{(s)} = \beta_0^{1/p} \sum_{t = 0}^{\floor{(n-1)/s} - 1}  \exp\left( - \beta_1 s t / p \right) &\leq \frac{\beta_0^{1/p}}{1 - \exp(- \beta_1 s / p)}.
\end{split}
\end{equation*}
We proceed to address the sub-exponential case.
Let us assume that $\beta(s) \leq \beta_0 \exp(-\beta_1 s^b)$ for $\beta_0 > 0, \beta_1 > 0$ and $b \in (0,1)$. Then,
\begin{equation*}
\begin{split}
    \calB_p^{(s)} &\leq \beta_0^{1/p} + \beta_0^{1/p} \sum_{t = 1}^{\floor{(n-1)/s} - 1}  \exp\left( - \beta_1 (s t)^b / p \right) \\
    &\leq \beta_0^{1/p} + \beta_0^{1/p} \exp(-\beta_1 s^b/p) + \beta_0^{1/p}  \sum_{t = 2}^{\infty} \exp\left( - \beta_1 s^b t^b / p \right) \\
    &\leq \beta_0^{1/p} + \beta_0^{1/p} \exp(-\beta_1 s^b/p) + \beta_0^{1/p} \int_{1}^{\infty} \exp(- \beta_1 s^b t^b/p) dt \\
    &= \beta_0^{1/p} + \beta_0^{1/p} \exp(-\beta_1 s^b/p) + \beta_0^{1/p} \left[ \frac{-1}{b (\beta_1 s^b / p)^{1/b}} \Gamma\left(\frac{1}{b}, \beta_1 s^b t^b / p\right) \right]_{1}^{\infty}.
\end{split}
\end{equation*}

Finally, in the polynomial case, where for $s \geq 1$, $\beta(s) \leq \beta_1 /s^b$,
\begin{equation*}
\begin{split}
    \calB_{p}^{(s)} &\leq \beta_0^{1/p} + \beta(s)^{1/p} \sum_{t = 1}^{\floor{(n-1)/s} - 1} \frac{1}{t^{b/p}} 
    \leq \beta_0^{1/p} + \zeta(b/p) \beta(s)^{1/p}. \\
\end{split}
\end{equation*}
\qed

\subsection{Proof of Theorem~\ref{theorem:estimation-beta-coefficients-general-ergodicity-skip-wise}}
We first focus on the estimation of $\beta(1)$. The general result for $\beta(s)$ will be obtained by considering an $s$-skipped chain.
First, similar to the proof of Theorem~\ref{theorem:estimation-beta-coefficients-skip-wise-without-tmix}, we decompose using the triangle inequality,
\begin{equation*}
\begin{split}
        2 \abs{\widehat{\beta}(1) - \beta(1)} 
        &\leq \sum_{x, x' \in \calX} \abs{    \frac{N_{x x'}}{n-1}   -   Q(x, x')  } + 2 \sum_{x \in \calX} \abs{\frac{N_x}{n-1} - \pi(x)}.
\end{split}
\end{equation*}
We treat the two sums separately,
\begin{equation}
\label{eq:two-probability-terms}
    \PR[\pi]{\abs{\widehat{\beta}(1) - \beta(1)} > \eps} \leq \PR[\pi]{\sum_{x, x' \in \calX} \abs{    \frac{N_{x x'}}{n-1}   -   Q(x, x')  } >  \eps} + \PR[\pi]{\sum_{x \in \calX} \abs{\frac{N_x}{n-1} - \pi(x)} >  \eps/2}.
\end{equation}
Let us focus on the second probability statement.
For simplicity, we assume that $n - 1 = 2Br$, where we decompose the process into $2B$ Bernstein blocks (refer to Figure~\ref{fig:blocking-method}) of size $r$.
\begin{equation*}
    \begin{split}
        \sum_{x \in \calX} \abs{    \frac{N_{x}}{n-1}   -   \pi(x) } 
         &= \sum_{x \in \calX} \abs{\frac{1}{2} \sum_{\sigma \in \{0,1\}}  \frac{1}{B} \sum_{b = 1}^{B} \frac{1}{r} \sum_{t=1}^{r}    
         \left(  \pred{X_{t}^{[2b - \sigma]} = x}   -   \pi(x) \right)} \\
          &\leq \frac{1}{2}\sum_{\sigma \in \{0,1\}} \frac{1}{B} \sum_{b = 1}^{B} \frac{1}{r} \sum_{x \in \calX}  \abs{     
          \sum_{t=1}^{r} \left(\pred{X_{t}^{[2b - \sigma]} = x }   -   \pi(x)\right) }. \\
    \end{split}
\end{equation*}
Therefore, applying \citet[Corollary~2.7]{yu1994rates},
\begin{equation}
\label{eq:after-blocking}
    \PR[\pi]{\sum_{x \in \calX} \abs{\frac{N_x}{n-1} - \pi(x)} >  \eps/2} \leq \sum_{\sigma \in \{0,1\}} \PR[\pi]{ \frac{1}{B} \sum_{b = 1}^{B} Z_b^{(\sigma)} >  \eps/2} + (B-1) \beta(r),
\end{equation}
where
\begin{equation*}
    Z_b^{(\sigma)} \eqdef \frac{1}{r} \sum_{x \in \calX}  \abs{     
          \sum_{t=1}^{r} \left(\pred{X_{t}^{[2b - \sigma]} = x }   -   \pi(x)\right) }
\end{equation*}
are independent random variables.
First, observing that $Z_b^{(\sigma)}$ is a function of a Markov chain $X_1, \dots, X_r$ stationarily sampled from $P$, we upper bound the expected value as in Theorem~\ref{theorem:estimation-beta-coefficients-skip-wise-without-tmix},
\begin{equation*}
\begin{split}    
    \bbE_\pi Z_b^{(\sigma)} &\stackrel{(i)}{\leq} \frac{1}{r} \sum_{x \in \calX}   \sqrt{ \Var[\pi]{\sum_{t=1}^{r} \pred{X_{t}^{[2b - \sigma]} = x}} } \\
          &\stackrel{(ii)}{\leq} \frac{2}{\sqrt{r}} \nrm{\pi}_{(1 - 1/p)/2}^{(1 - 1/p)/2}  \sqrt{ \calB_{p}},
\end{split}
\end{equation*}
where $(i)$ is Jensen's inequality and $(ii)$ is an application of Lemma~\ref{lemma:variance-markov-visit-counts}.
Thus for
\begin{equation*}
    r \geq \frac{64 \nrm{\pi}_{(1-1/p)/2}^{1-1/p} \calB_p}{\eps^2},
\end{equation*}
it holds that
\begin{equation*}
    \bbE_\pi Z_b^{(\sigma)} \leq \eps/4,
\end{equation*}
and it remains to bound
\begin{equation*}
    \PR[\pi]{ \frac{1}{B} \sum_{b = 1}^{B} Z_b^{(\sigma)} - \bbE_{\pi} Z_b^{(\sigma)}  >  \eps/4}.
\end{equation*}
The random variables $Z_b^{(\sigma)}$ are bounded. Indeed,
\begin{equation*}
\begin{split}
    Z^{(\sigma)}_b &= \frac{1}{r} \sum_{x \in \calX}  \abs{     
          \sum_{t=1}^{r} \left(\pred{X_{t}^{[2b - \sigma]} = x}   -   \pi(x)\right) } \\
          &\leq \frac{1}{r} \left( \sum_{x  \in \calX} \sum_{t = 1}^{r} \pred{X_{t}^{[2b - \sigma]} = x} +  \sum_{x \in \calX} \sum_{t = 1}^{r} \pi(x) \right) \leq 2. \\
\end{split}
\end{equation*}
Therefore, from an application of Hoeffding's inequality,
\begin{equation*}
    \PR[\pi]{ \frac{1}{B} \sum_{b = 1}^{B} Z_b^{(\sigma)} - \bbE_{\pi} Z_b^{(\sigma)}  >  \eps/4} \leq \exp\left( -\frac{2(B\eps/4)^2}{B 2^2} \right) = \exp\left( -\frac{B\eps^2}{32} \right).
\end{equation*}
Plugging in \eqref{eq:after-blocking},
\begin{equation*}
    \PR[\pi]{\sum_{x \in \calX} \abs{\frac{N_x}{n-1} - \pi(x)} >  \eps/2} \leq 2\exp\left( -\frac{B\eps^2}{32} \right) + (B-1) \beta(r).
\end{equation*}
For $B \geq \frac{32\log(8/\delta)}{\eps^2}$ and $r \geq \atmix(\xi(\eps, \delta))$ with 
$$\xi(\eps,\delta) = \frac{ \eps^2 \delta}{128 \log(8/\delta)},$$
the above probability is smaller than $\delta/2$
In other words, regardless of uniform ergodicity, for
\begin{equation*}
    n \geq 1 + \frac{64 \log(8/\delta)}{\eps^2} \max \set{ \frac{64}{\eps^2} \nrm{\pi}_{(1 - 1/p)/2}^{1 - 1/p} \calB_p, \atmix(\xi(\eps, \delta)) }, \qquad \xi(\eps,\delta) = \frac{ \eps^2 \delta}{128 \log(8/\delta)},
\end{equation*}
it holds that
\begin{equation*}
    \PR[\pi]{\sum_{x \in \calX} \abs{\frac{N_x}{n-1} - \pi(x)} >  \eps/2} \leq \delta/2.
\end{equation*}
A similar approach for the sliding-window chain $(X_t, X_{t+1})_{t \in \bbN}$ leads to a bound on the first term of
\eqref{eq:two-probability-terms}.
Finally, considering the $s$-skipped version of a chain, and
noting that for any $\xi \in (0,1)$, ${\atmix}^{(s)}(\xi) \leq \ceil{\atmix(\xi)/s}$ yields the claim for $\beta(s)$.
\qed

\subsection{Proof of Theorem~\ref{theorem:estimation-beta-coefficients-uniform-ergodicity-skip-wise-amd}}
Our starting point for this proof if 
\eqref{eq:absolute-deviation-variance-bound}.
However, instead of \citeauthor{rio1993covariance}'s inequality, we rely on techniques in \citet{paulin2015concentration} to obtain an upper bound on the two variance terms. The mixing time of the $s$-skipped pairs of observations is upper bounded by $\tmix^{(s)} + 1 \leq \ceil{\tmix/s} + 1$ \citep[Lemma~6.1]{wolfer2021}.
Combining \citet[Theorem~3.2, Proposition ~3.4]{paulin2015concentration}, it follows that
\begin{equation*}
\begin{split}
    \Var[\pi]{N_{x}^{(s)}} &\leq  4 \floor{(n-1)/s} \ceil{\tmix /s}  \pi(x), \\
    \Var[\pi]{N_{xx'}^{(s)}} &\leq  4 \floor{(n-1)/s} (\ceil{\tmix /s} + 1) Q^{(s)}(x,x'), \\
\end{split}
\end{equation*}
which yields
\begin{equation*}
\begin{split}
    \bbE \abs{ \widehat{\beta}(s) - \beta(s) } 
    \leq& \sqrt{\frac{\ceil{\tmix /s} + 1}{\floor{(n-1)/s}}} \left( \nrm{Q^{(s)}}_{1/2}^{1/2} + 2 \nrm{\pi}_{1/2}^{1/2} \right) \\
    \leq& 3 \sqrt{\frac{\ceil{\tmix /s} + 1}{\floor{(n-1)/s}} \calJ_{\infty}^{(s)}}. \\
\end{split}
\end{equation*}
The claim with $\Cue \leq 3 \sqrt{2}$ follows from $n > s + 1$ and $x > 1 \implies \floor{x} \geq x/2$ and
$s(\ceil{\tmix /s} + 1) \leq \tmix + 2s$.

\qed

\subsection{Proof of Theorem~\ref{theorem:estimation-beta-coefficients-uniform-ergodicity-skip-wise-pac}}
From Theorem~\ref{theorem:estimation-beta-coefficients-uniform-ergodicity-skip-wise-amd}, when 
$$n \geq 1 + \frac{4\Cue^2}{\eps^2} (\tmix + 2s) \calJ_\infty^{(s)},$$
it holds that, $\bbE_\pi \abs{ \widehat{\beta}(s) - \beta(s) } \leq \eps /2$ and in this case,
the probability of the above deviation can be upper bounded by fluctuations around the mean as follows,
\begin{equation*}
    \PR[\pi]{\abs{\widehat{\beta}(s) - \beta(s)} > \eps} \leq \PR[\pi]{\abs{\abs{\widehat{\beta}(s) - \beta(s)} - \bbE_\pi \abs{\widehat{\beta}(s) - \beta(s)} }> \eps / 2}.
\end{equation*}

Let $$\bar{x} = (x_{1}, x_{2}, \dots, x_{n}) \in \calX^n,$$ and for $t \in [n]$ and $x'_{t} \in \calX$, let us
denote 
$$\bar{x}^{\set{t}} \eqdef (x_{1}, x_2, \dots, x_{t-1}, x'_{t}, x_{t+1} \dots, x_{n})$$ the trajectory where the observation $x_{t}$ at time $t$ was replaced with observation $x'_{t}$.
We show that the absolute deviation $\abs{\widehat{\beta}(s) - \beta(s)} \colon \calX^n \to \bbR$ satisfies the bounded differences condition. We decompose, 
\begin{equation*}
\begin{split}
    &\abs{\abs{\widehat{\beta}(s)(\bar{x}) - \beta(s)} - \abs{\widehat{\beta}(s)(\bar{x}^{\set{t}}) - \beta(s)}} \\ &\leq \abs{\widehat{\beta}(s)(\bar{x}) - \widehat{\beta}(s)(\bar{x}^{\set{t}})} \\
    &\leq \frac{1}{\floor{(n-1)/s}} \sum_{x,x' \in \calX} \abs{ N_{xx'}^{(s)}(\bar{x}) - N_{xx'}^{(s)}(\bar{x}^{\set{t}}) } \\ &+ \frac{1}{\floor{(n-1)/s}^2} \sum_{x,x' \in \calX} \abs{ N_x^{(s)}(\bar{x}) N_{x'}^{(s)}(\bar{x}) - N_x^{(s)}(\bar{x}^{\set{t}}) N_{x'}^{(s)}(\bar{x}^{\set{t}})}.\\
\end{split}
\end{equation*}
We further upper bound the second term as follows,
\begin{equation*}
\begin{split}
    & \frac{1}{\floor{(n-1)/s}^2} \sum_{x,x' \in \calX} \abs{ N_x^{(s)}(\bar{x}) N_{x'}^{(s)}(\bar{x}) - N_x^{(s)}(\bar{x}^{\set{t}}) N_{x'}^{(s)}(\bar{x}^{\set{t}})} \\
    &\leq \frac{1}{\floor{(n-1)/s}^2} \sum_{x,x' \in \calX} \abs{ N_x^{(s)}(\bar{x}) N_{x'}^{(s)}(\bar{x}) - N_x^{(s)}(\bar{x}) N_{x'}^{(s)}(\bar{x}^{\set{t}})} \\
    &+ \frac{1}{\floor{(n-1)/s}^2} \sum_{x,x' \in \calX} \abs{ N_x^{(s)}(\bar{x}) N_{x'}^{(s)}(\bar{x}^{\set{t}}) - N_x^{(s)}(\bar{x}^{\set{t}}) N_{x'}^{(s)}(\bar{x}^{\set{t}})}  \\
    &\leq \frac{1}{\floor{(n-1)/s}} \sum_{x' \in \calX} \abs{  N_{x'}^{(s)}(\bar{x}) -  N_{x'}^{(s)}(\bar{x}^{\set{t}})} + \frac{1}{\floor{(n-1)/s}} \sum_{x \in \calX} \abs{ N_x^{(s)}(\bar{x})  - N_x^{(s)}(\bar{x}^{\set{t}}) }.\\
\end{split}
\end{equation*}
We obtain
\begin{equation*}
    \abs{\abs{\widehat{\beta}(s)(\bar{x}) - \beta(s)} - \abs{\widehat{\beta}(s)(\bar{x}^{\set{t}}) - \beta(s)}} \leq \frac{4}{\floor{(n-1)/s}} + \frac{2}{\floor{(n-1)/s}} + \frac{2}{\floor{(n-1)/s}}.
\end{equation*}
From a version of McDiarmid's inequality for uniformly ergodic Markov chains \citep[Corollary~2.10]{paulin2015concentration}, and the fact that the mixing time of the $s$-skipped chain $\tmix^{(s)}$ verifies $\tmix^{(s)} \leq \ceil{\tmix/s}$,
\begin{equation*}
\begin{split}
    \PR[\pi]{ \abs{\widehat{\beta}(s) - \beta(s)} > \eps } &\leq 2 \exp \left( - \frac{2 (\eps/2)^2}{9 \tmix^{(s)} \floor{(n-1)/s} \left( \frac{8}{\floor{(n-1)/s}} \right)^2} \right) \\ &\leq 2 \exp \left( - \frac{(n-1) \eps^2}{9 \cdot 2^{8}(\tmix + s)} \right),
\end{split}
\end{equation*}
which completes the proof.
\qed

\subsection{Proof of Lemma~\ref{lemma:convert-beta-pac-to-atmix-bound}}
We first bound the probability that the estimator overshoots the target interval, that is $\widehat{\tmix}^{\sharp}(\xi) > \tmix^{\sharp}(\xi(1 - \eps))$. 
On this event, for any $s \leq \tmix^{\sharp}(\xi(1 - \eps))$, and in particular for $s = \tmix^{\sharp}(\xi(1 - \eps))$, it must be that $\widehat{\beta}(s) > \xi$. 
From our assumption
\begin{equation*}
    n \geq n_0(P, \atmix(\xi(1- \eps)), \xi\eps, \delta/2),
\end{equation*}
it holds that
\begin{equation*}
\begin{split}
\PR[\pi]{ \widehat{\beta}(\tmix^{\sharp}(\xi(1 - \eps))) > \xi } 
&\leq \PR[\pi]{ \widehat{\beta}(\tmix^{\sharp}(\xi(1 - \eps))) - \beta(\tmix^{\sharp}(\xi(1 - \eps))) > \xi - \xi(1 - \eps) } \\
    &\leq \PR[\pi]{ \abs{\widehat{\beta}(\tmix^{\sharp}(\xi(1 - \eps))) - \beta(\tmix^{\sharp}(\xi(1 - \eps)))} > \xi \eps } \leq \frac{\delta}{2}.
\end{split}
\end{equation*}
As a second step, we bound the probability that $\widehat{\tmix}^\sharp(\xi)$ underestimates the true parameter, that is $\widehat{\tmix}^{\sharp}(\xi) < \tmix^{\sharp}(\xi(1 + \eps))$.
From our assumption and an application of the union bound, when for any $s \leq \atmix(\xi(1 + \eps))$ the trajectory length satisfies
\begin{equation*}
    n \geq n_0\left(P, s, \xi \eps, \frac{\delta}{2 \atmix(\xi)}\right),
\end{equation*}
it holds that
\begin{equation*}
\begin{split}
    \PR[\pi]{\exists s < \atmix(\xi(1 + \eps)), \widehat{\beta}(s) \leq \xi} &\leq \sum_{s = 1}^{\atmix(\xi(1 + \eps)) - 1} \PR[\pi]{ \widehat{\beta}(s) \leq \xi}\\
    &\leq \sum_{s = 1}^{\atmix(\xi(1 + \eps)) - 1} \PR[\pi]{ \beta(s) - \widehat{\beta}(s) \geq -\xi + \xi(1 + \eps)}\\
    &\leq \sum_{s = 1}^{\atmix(\xi(1 + \eps)) - 1} \PR[\pi]{ \abs{\widehat{\beta}(s) - \beta(s)} \geq \xi \eps} \leq \frac{\delta}{2}.
\end{split}
\end{equation*}
\qed

\section*{Acknowledgments}
GW would like to thank Luca Zanetti and John Sylvester for posing the original question of estimating the average-mixing time,
and Aryeh Kontorovich for the interesting discussions. The authors also thank the anonymous referees for their careful reading and insightful remarks.

\section*{Funding}
The work of GW was supported in part by the SPDR Program of RIKEN and by Waseda University Grants for Special Research Projects (Project number: 2024C-667).
The visit of PA to RIKEN in October 2023, during which part of this work was conducted, was sponsored by the SPDR program of RIKEN.

\bibliography{bibliography}
\bibliographystyle{abbrvnat}

\end{document}